\setlist{nosep}
\setlist{noitemsep}
\setlist[itemize]{leftmargin=*}
\numberwithin{equation}{section}
\renewcommand{\epsilon}{\varepsilon}
\def \R{\mathbb{R}}
\def \Cc{\mathbb{C}}
\theoremstyle{plain}
\newtheorem{theo}{Theorem}
\newtheorem{prop}{Proposition}[section]
\newtheorem{lem}[prop]{Lemma}
\newtheorem{coro}[prop]{Corollary}
\newtheorem{claim}[prop]{Claim}
\theoremstyle{definition}
\newtheorem{defi}[prop]{Definition}
\newtheorem{remark}[prop]{Remark}
\def \t0{\rightarrow 0} 
\def \ti{\rightarrow \infty}
\def \hal{\frac{1}{2}}
\def \C{\mathcal{C}} 
\def \Esp{\mathbf{E}} 
\def \tD{\widetilde{D}}
\def \B{\mathbf{B}} 
\def \Fluct{\mathrm{Fluct}}
\def \Lap{\mathcal{L}_P}
\def \psis{\psi_s}
\def \smax{\mathrm{s}_{\mathrm{max}}}
\def \Ms{\mathbf{M}_s}
\def \PV{\mathbf{PV}}
\def \phiL{\phi_{\Lambda}}
\def \0{\mathsf{0}}
\def \1{\mathsf{1}}
\def \2{\mathsf{2}}
\def \3{\mathsf{3}}
\def \4{\mathsf{4}}
\def \k{\mathsf{k}}
\def \Varphi{\mathsf{V}[\varphi]}
\def \L{\mathsf{L}}
\def \Phis{\Phi_s}
\def \Ms{\mathbf{M}_s}
\def \psis{\psi_s}
\def \p{\mathsf{p}}
\def \Move{\mathrm{Move}}
\def \C{\mathcal{C}}
\def \Discr{\mathrm{Discr}}
\def \Oun{O_{\bullet}}
\def \R{\mathbb{R}}
\def \Cc{\mathbb{C}}
\def \t0{\rightarrow 0} 
\def \ti{\rightarrow \infty}
\def \hal{\frac{1}{2}}
\def \C{\mathcal{C}} 
\def \Esp{\mathbf{E}} 
\def \tD{\widetilde{D}}
\def \B{\mathbf{B}} 
\def \Fluct{\mathrm{Fluct}}
\def \Lap{\mathcal{L}_P}
\def \psis{\psi_s}
\def \smax{\mathrm{s}_{\mathrm{max}}}
\def \Ms{\mathbf{M}_s}
\def \HLP{\mathfrak{H}_{\Lambda}^{\varphi}}
\def \PV{\mathbf{PV}}
\def \phiL{\phi_{\Lambda}}
\def \0{\mathsf{0}}
\def \1{\mathsf{1}}
\def \2{\mathsf{2}}
\def \k{\mathsf{k}}
\def \Varphi{\mathsf{V}[\varphi]}
\def \L{\mathsf{L}}
\def \Phis{\Phi_s}
\def \Ms{\mathbf{M}_s}
\def \psis{\psi_s}
\def \p{\mathsf{p}}
\def \Move{\mathrm{Move}}
\def \C{\mathcal{C}}
\def \Discr{\mathrm{Discr}}
\def \Oun{O_{\bullet}}
\def \Esp{\mathbf{E}}
\def \id{\mathrm{Id}}
\def \tmsL{\mathbf{m}_{s}}
\def \l{\ell}
\def \Er{\mathrm{Er}}
\def \diagc{(\Lambda \times \Lambda) \setminus \diamond}
\def \V{\mathsf{V}}
\def \phil{\varphi_{\l}}
\def \sineb{\mathrm{Sine}_{\beta}}
\def \HLP{\mathfrak{H}_{\lambda, \varphi}}
\def \ophi{\overline{\varphi}}
\def \tmsL{\mu_{s}}
\def \etas{\eta_s}
\def \Aa{\mathsf{A}}
\def \Bb{\mathsf{B}}
\def \Cc{\mathsf{C}}
\def \Main{\mathrm{Main}}
\def \P{\mathbb{P}}
\def \Esp{\mathbb{E}}
\def \Dt{\widetilde{D}}
\def \DL{\widetilde{D}^{\mathrm{Left}}}
\def \DR{\widetilde{D}^{\mathrm{Right}}}
\def \mulambda{\mathfrak{m}_{\lambda, \varphi}}
\def \muL{\widetilde{\mathfrak{m}}_{\lambda, \varphi}}
\def \ophiH{\|\ophi\|_{H^{\hal}}}
\def \Gam{\mathsf{LP}_{\lambda, \varphi}}
\def \Ms{F_s}
\def \id{\mathrm{Id}_{\lambda}}
\def \PNbeta{\mathbb{P}_{N, \beta}}
\def \ZNbeta{\mathrm{Z}_{N, \beta}}
\def \Event{\mathsf{Event}_{\lambda, \l}}
\def \Pp{\mathsf{P}}
\def \Tt{\mathsf{T}}
\def \Rr{\mathsf{R}}
\def \D{\mathsf{D}}
\def \size{\mathsf{size}}
\def \Varphi{\V_{\lambda, \varphi}}
\def \I{\mathrm{Flu}\mathsf{LP}_A}
\def \II{\mathrm{Flu}\mathsf{LP}_B}
\def \III{\mathrm{Flu}\mathsf{LP}_C}
\def \ErrorLog{\mathrm{Error}\Gam}
\def \ErrorLogL{\ErrorLog^{\mathrm{Left}}}
\def \ErrorLogR{\ErrorLog^{\mathrm{Right}}}
\def \Errorlog{\ErrorLog}
\def \ErrorVar{\mathrm{ErrorVar}}
\def \Vx{V_{x}}
\def \Vi{V_{i}}
\def \Vj{V_{j}}
\def \Vk{V_{k}}
\def \DF{\mathsf{DF}_s}
\def \ErrorDF{\mathrm{Error}\DF}
\def \pto{o_{\l, \lambda}(1)}
\def \Main{\mathsf{Main}_s}
\def \MainA{\Main^{A}}
\def \MainZ{\Main^{\circ}}
\def \MainB{\Main^{B}}
\def \MainC{\Main^{C}}
\def \MainD{\Main^{D}}
\def \RE{\mathsf{RE}_s}
\def \FluRE{\mathrm{Flu}\RE}
\def \Cs{\C_s}
\def \tri{\Delta_s}
\def \Ff{\mathsf{F}}
\def \Gg{\mathsf{G}}
\def \Hh{\mathsf{H}}
\def \Ggeta{\Gg_{\eta}}
\def \qt{\frac{4}{3}}
\def \La{\Lambda}
\def \tMove{\widetilde{\mathcal{M}}_{\Lambda}}
\def \Move{\mathcal{M}_{\Lambda}}
\def \GLa{\mathsf{Gibbs}_{\La, \beta}}
\def \B{\mathbf{B}}
\def \HLa{\mathsf{H}_{\La}}
\def \tHLa{\widetilde{\mathsf{H}}_{\La}}
\def \L{\l}
\begin{document}
\title{CLT for fluctuations of linear statistics in the Sine-beta process}
\author{Thomas Leblé}
\address{Courant Institute of Mathematical Sciences, 251 Mercer Street, New York University, New York, NY 10012-1110, USA}
\date{\today}
\email{thomasl@math.nyu.edu}

\begin{abstract}
We prove, for any $\beta >0$, a central limit theorem for the fluctuations of linear statistics in the $\sineb$ process, which is the infinite volume limit of the random microscopic behavior in the bulk of one-dimensional log-gases at inverse temperature~$\beta$.

If $\ophi$ is a compactly supported test function of class $C^4$, and $\C$ is a random point configuration distributed according to $\sineb$, the integral of $\ophi(\cdot / \l)$ against the random fluctuation $d\C - dx$, converges in law, as $\l$ goes to infinity, to a centered normal random variable whose standard deviation is proportional to the Sobolev $H^{1/2}$ norm of $\ophi$ on the real line.

The proof relies on the DLR equations for $\sineb$ established by Dereudre-Hardy-Maïda and the author, the Laplace transform trick introduced by Johansson, and a transportation method previously used for $\beta$-ensembles at macroscopic scale. 
\end{abstract}
\maketitle

\section{Introduction}

\subsection{The Sine-beta process}
The $\sineb$ process is obtained as the \textit{infinite volume}, or \textit{thermodynamic}, limit of the \textit{microscopic} behavior in the \textit{bulk} of a \textit{one-dimensional log-gas}.

Let $\beta > 0$ be a fixed value of the \textit{inverse temperature} parameter. For $N \geq 1$, the probability measure on $\R^N$ given by the density
\begin{equation}
\label{def:PNbeta}
d\PNbeta(x_1, \dots, x_N) := \frac{1}{\ZNbeta} \exp\left( - \beta \left( \sum_{i < j} - \log |x_i - x_j| + \sum_{i=1}^N \frac{x_i^2}{2} \right) \right), 
\end{equation}
with respect to the Lebesgue measure on $\R^N$, where $\ZNbeta$ is a normalization constant, is the \textit{canonical Gibbs measure} of a one-dimensional log-gas at (inverse) temperature $\beta$. It corresponds physically to a system of $N$ particles interacting via a pairwise repulsive logarithmic potential, and confined by some external field that we take here to be quadratic, for simplicity. 

For $\beta = 1, 2, 4$, the density $\PNbeta$ coincides with the joint law of the $N$ eigenvalues of certain classical models of \textit{random matrices}: the Gaussian orthogonal, unitary, and symplectic ensemble\footnote{With a correct choice of the variance, due to the presence of $\beta$ in front of $\sum_i x_i^2$.}, respectively. We refer to \cite{forrester2010log} for a comprehensive survey of this connection. In fact, for every $\beta > 0$, there exists a model of random matrices with independent entries, known as the “tridiagonal model”, discovered in \cite{dumitriu2002matrix}, whose random eigenvalues behave like the particles of a log-gas at inverse temperature $\beta$.

 From a statistical physics point of view, one-dimensional log-gases are interesting toy models due to the fact that interaction is singular and, most importantly, \textit{long-range}: in contrast to many pair potentials studied in the literature, the logarithmic interaction does not tend rapidly to zero with the distance between the particles (in fact, not at all). 

Under $\PNbeta$, it is known that the particles typically arrange themselves in an interval approximately given by $[-2N, 2N]$. We consider this as being the \textit{microscopic behavior} of the system\footnote{In contrast to another object of study, the \textit{macroscopic behavior}, which corresponds to rescaling the particles by a factor $1/N$ in order for them to stay in some bounded interval, or equivalently to change the $x_i^2$ term in \eqref{def:PNbeta} into $N x_i^2$.}. 

We can see the random $N$-tuple $\C_N := (x_1, \dots, x_N)$ as a random, finite, point configuration in $\R$. The existence of a limit, or even of limit points, in some interesting topology, to the law of $\C_N$ is a difficult question. It was shown in \cite{valko2009continuum}, and \cite{killip2009eigenvalue}\footnote{For a closely related model, whose limit turns out to be the same.} that when taking the thermodynamic/infinite volume limit, i.e. letting $N \to \infty$, the random, finite point configuration $\C_N$ converges in law to some random, infinite point configuration on $\R$, whose law is called the $\sineb$ process. In both cases, a description of $\sineb$ is given through a system of coupled stochastic differential equations.

Finally, since the topology of convergence is local, $\sineb$ only captures the microscopic behavior “near $0$”. One could ask instead for the limit of $\C_N$ translated by $cN$, where $c$ is some parameter. It turns out that for $c$ in $(-2, 2)$, the law of the limit is the same, up to a scaling on the average  density of points. We call this the \textit{bulk} behavior. For $c = \pm 2$, one obtains the \textit{edge} behavior, whose limit is named the $\mathrm{Airy}_\beta$ process. For $|c| > 2$, the limit point process is almost surely empty.

\subsection{Main result: CLT for fluctuations of linear statistics}
\subsubsection{Definitions}
If $\C$ is a point configuration on $\R$, and $\varphi$ a continuous, compactly supported test function, we will often use the notation $\int \varphi(x) d\C(x)$ for
$$
\int \varphi(x) d\C(x) := \sum_{p \in \C} \varphi(p).
$$

\begin{defi}[Fluctuations of linear statistics]
Let $\varphi$ be a function of class $C^0$, compactly supported on $\R$, and let $\C$ be a point configuration on $\R$. We define the fluctuation of the \textit{linear statistic} associated to $\varphi$ as the quantity
\label{def:fluctuations}
\begin{equation}
\Fluct[\varphi](\C) := \int \varphi(x) (d\C(x) - dx).
\end{equation}
\end{defi}

\begin{defi}[Rescaled function]
\label{defi:rescale}
Let $\ophi$ be a test function, and $\l > 0$. We define the associated rescaled test function $\phil$ as 
\begin{equation}
\label{def:varphil}
\phil : x \mapsto  \ophi \left( \frac{x}{\l} \right).
\end{equation}
\end{defi}

\begin{defi}[$H^{1/2}$ norm on the real line]
Whenever the following quantity is finite, we call it the $H^{1/2}$ norm of $\ophi$
\begin{equation}
\label{def:ophiH}
\ophiH := \frac{1}{2\pi} \left( \iint_{\R \times \R} \left( \frac{ \ophi(x) - \ophi(y)}{x-y}\right)^2 dx dy\right)^{1/2}.
\end{equation}
\end{defi}
We may observe, that e.g. when $\ophi$ is of class $C^1$ and compactly supported, then $\ophiH$ is finite. Moreover, it is easy to check that the $H^{1/2}$ norm is invariant under rescaling as in \eqref{def:varphil}.

\subsubsection{Statement of the result}
\begin{theo}[CLT for fluctuations of linear statistics under $\sineb$] \label{theo:fluctuations}
Let $\ophi$ be a fixed test function of class $C^4$, compactly supported on $\R$, and for $\l > 0$, let $\phil$ be the rescaled function, as in Definition \ref{defi:rescale}. Let $\C$ be a random point configuration of law $\sineb$.

The following convergence holds, in law, as $\l \to \infty$,
\begin{equation*}
\Fluct[\phil](\C) \implies \text{Gaussian r.v. of mean $0$ and variance $\frac{2}{\beta} \ophiH^2$}.
\end{equation*}
\end{theo}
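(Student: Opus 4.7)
The plan is to apply the Laplace transform method of Johansson: for $t$ in a neighborhood of $0$, the goal is to show
\begin{equation*}
\Esp\left[\exp\bigl(t \, \Fluct[\phil](\C)\bigr)\right] \longrightarrow \exp\left(\frac{t^2}{\beta} \ophiH^2\right) \quad \text{as } \l \to \infty,
\end{equation*}
which is the Laplace transform of a centered Gaussian with variance $\tfrac{2}{\beta}\ophiH^2$; Lévy's continuity theorem then yields the conclusion.

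To compute the above expectation, I would first use the DLR equations for $\sineb$ to localize the problem. Fix a compact interval $\La$ containing the support of $\phil$ with some safety margin. Conditionally on $\C \cap \La^c$, the law of $\C \cap \La$ is a finite Gibbs measure with Hamiltonian $\HLa$ combining the internal logarithmic interaction of particles in $\La$, a long-range contribution from $\C \cap \La^c$ renormalized against the background density $dx$, and a Lebesgue reference measure on a (random) number of particles. The conditional Laplace transform then becomes a ratio of partition functions, one with Hamiltonian $\HLa$ and one with $\HLa - t\,\Fluct[\phil]$, and it suffices to analyze this ratio with enough uniformity in the conditioning.

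The core step is a transport argument, in the spirit of the method used for $\beta$-ensembles at the macroscopic scale. One introduces a smooth vector field $\psis$ on $\La$, parametrized by $s \in [0, t]$, and pushes the configuration inside $\La$ through $x \mapsto x + s\,\psis(x)$. The field $\psis$ is chosen to satisfy an approximate master equation so that, after change of variables in the partition function, the resulting exponent matches the tilt $t \int \phil\,d\C$ up to Jacobian and energy correction terms. The balance between the change in logarithmic interaction energy (quadratic in $\psis$) and the Jacobian from the Lebesgue reference measure (linear, via $\psis'$) essentially determines $\psis$ as a truncated Hilbert transform of $\phil$; the classical identity linking the logarithmic energy of a neutral charge distribution to its Sobolev $H^{1/2}$ norm then converts the leading-order quadratic expression into $\tfrac{t^2}{\beta}\ophiH^2$.

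The main difficulties are quantitative. First, the \emph{discretization error} $\int \psis'(x)\,(d\C - dx)$ is itself a linear statistic fluctuation, for a function $\psis'$ which is not compactly supported and lives at the same scale $\l$; controlling it requires a bootstrap on $\l$, combined with \emph{a priori} local fluctuation estimates under $\sineb$. Second, the \emph{boundary error} arising from the long-range logarithmic influence of $\C \cap \La^c$ on the transported configuration must be shown to remain negligible as $\La$ grows, which relies on the number-rigidity of $\sineb$. Third, the vector field $\psis$ has to be truncated to be supported inside $\La$, introducing a cutoff error that must be reabsorbed into the main term. Balancing the size of $\La$ (large enough for the master equation to be approximately solvable and for tails to be harmless) against $\l$ (which sets the scale of $\phil$) is the delicate tuning, and I expect this balance, together with the bootstrap on discretization fluctuations, to be the main source of technical difficulty.
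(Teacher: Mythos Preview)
Your high-level strategy is exactly the paper's: DLR equations to localize, Johansson's Laplace transform trick, and a transportation argument to compare the original and perturbed partition functions, with the $H^{1/2}$ norm emerging from the logarithmic self-energy of the perturbation measure. The structure of your outline---a transport $\Phi_s$ built so that the energy expansion produces the desired variance plus error terms---is correct.

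However, your proposed handling of the main error terms differs from the paper in an important way. You suggest controlling the ``discretization error'' (the analogue of the paper's $\Main$ term) by a \emph{bootstrap} on $\ell$, and the boundary contribution via \emph{number-rigidity}. The paper deliberately avoids both devices. Instead, it relies throughout on \emph{quantitative discrepancy estimates} for $\sineb$---in particular the sublinear variance bound $\Esp[(\Discr_{[-R,R]})^2]=o(R)$---which yield an a priori control on fluctuations of any $C^1$ statistic (Proposition~\ref{prop:aprioribounds}). This single tool is strong enough to show directly that $\Main$, $\FluRE$, and the interior--exterior error terms are $s\cdot o_{\ell,\lambda}(1)$, without any bootstrap. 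The paper in fact highlights this as its main technical simplification over the $\beta$-ensemble literature. Your bootstrap route is the more traditional one (as in Borot--Guionnet) and would presumably work, but it is heavier; the paper's contribution is precisely to replace it by the discrepancy input.

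A second, smaller point: the transport in the paper is not of the form $x\mapsto x+s\psi_s(x)$ with $\psi_s$ solving a master equation. It is the monotone rearrangement from the constant density $1$ to the perturbed density $\mu_s=1+s\,\muL$, where $\muL$ is a regularized version of the equilibrium-type measure $\mulambda(x)=-\tfrac{1}{\pi\sqrt{\lambda^2-x^2}}\HLP(x)$. The regularization near $\pm\lambda$ (Lemma~\ref{lem:approximatemu}) is essential to keep the transport smooth and compactly supported away from the endpoints, and tracking the error it induces on the logarithmic potential is a nontrivial part of the argument that your sketch does not anticipate.
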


\subsubsection{Notation}
{Henceforth, we let $\ophi$ be a fixed test function of class $C^4$, compactly supported in $\R$, and for $\l > 0$ we let $\phil$ be as in Definition \ref{defi:rescale}. For lightness of notation, we drop the subscript $\l$ and write $\varphi$ instead of $\phil$. Also, for simplicity, we assume that $\ophi$ is supported in $(-1,1)$, so that $\varphi = \phil$ is supported in $(-\l, \l)$.}

We work with two parameters $\l, \lambda$. We will always assume that $\l, \lambda$ satisfy
\begin{equation}
\label{lvslambda}
100 < \l < \frac{\lambda}{1000},
\end{equation}
and we will use the notation $a \preceq b$ as follows
$$
a \preceq b \iff |a| \leq C|b|, 
$$
where $C$ is some multiplicative constant \textit{independent} of $\l, \lambda$, provided \eqref{lvslambda} is satisfied. We will sometimes write $\Oun(b)$ to denote a quantity that is $\preceq b$. Most implicit constants will depend on the test function $\ophi$.

If $A$ is a quantity depending on $\l, \lambda$, we use the notation $A = \pto$ to denote the fact that
$$
\lim_{\lambda \ti} \lim_{\l \ti} A = 0.
$$

We let $\Lambda$ be the interval $(-\lambda, \lambda)$.

All the expectations, denoted by $\Esp$, are expectations under $\sineb$, and all the probabilities, denoted by $\P$, are probabilities for $\sineb$.

\subsection{Strategy of the proof and connection with other works}
\subsubsection{Strategy of proof}
The proof relies on three main ingredients:
\begin{enumerate}
\item The DLR equations of \cite{DLRpreprint}.
\item The Laplace transform trick of \cite{MR1487983}.
\item The transportation method inspired by \cite{bekerman2017clt}. 
\end{enumerate}

The DLR (for Dobrushin-Landford-Ruelle) equations provide a version of the Gibbs measure \eqref{def:PNbeta} for “$N~=~+~\infty$”, and thus give a representation $\sineb$ as an \textit{infinite-volume Gibbs measure}, allowing for a “statistical physics approach”. We state these equations precisely in Section \ref{sec:DLRequations}, let us think of them as describing $\sineb$, in any interval, as a mixture of Gibbs measures resembling $\PNbeta$.

The CLT for fluctuations of linear statistics of log-gases has been proven by \cite{MR1487983} in the context of Hermitian random matrices, stated as a limit in law as $N \to \infty$ of fluctuations at macroscopic scale. A key point of the proof is the following observation: forming the Laplace transform of the fluctuations of $\varphi$ amounts to computing the partition function of a log-gas with a perturbed external field, where $\hal x_i^2$ in \eqref{def:PNbeta} is replaced by $\hal x_i^2 + s \varphi(x_i)$, where $s$ is small, and related to the parameter of the Laplace transform. More precisely, one is led to consider the ratio of the perturbed partition function and the original partition function, and the argument boils down to proving fine estimates of this ratio.

One way to compare the partition functions is to use a change of variables, or transportation method, as in e.g. \cite{shchange}, \cite{bekerman2015transport}, \cite{bekerman2017clt}. It effectively shifts the focus from the external field to the associated \textit{equilibrium measure}, in the sense of logarithmic potential theory. Then the question becomes to compute the perturbed equilibrium measure, to push the original one onto the perturbed one by some change of variables (or transportation map), and to use this transport to estimate the ratio of the partition functions. This is closely related to the “loop equations” approach.

Our proof is in the same spirit, with several modifications:
\begin{itemize}
\item The papers cited above treat linear statistics at macroscopic scale, and consider the limit in law as $N \to \infty$ of
$\sum_{i=1}^N \varphi(x_i)$, when $(x_1, \dots, x_N)$ are distributed according to a Gibbs measure similar to $\PNbeta$, with a possibly more general choice of external field. The CLT is also known to hold at mesoscopic scale, when $\varphi$ is taken as $\ophi(\cdot /N^{\delta})$, for $\delta \in (0, \frac{1}{2})$, see \cite{bekerman2016mesoscopic}. 

In contrast, the present work deals with the \textit{microscopic scale} and is the first one to consider the fluctuations for the limit process $\sineb$ itself for arbitrary values of $\beta$. Since we take rescaled functions $\phil$ and let $\l \to \infty$ can think of Theorem \ref{theo:fluctuations} as a result about fluctuations of the log-gas at \textit{large microscopic scales}.
\item When comparing the partition functions, there is usually a term (here $\Main$, see \eqref{def:MainComp}) whose magnitude is \textit{a priori} of order $1$, and must then be studied more carefully to show that it is in fact $o(1)$. This can be done by a technical bootstrap argument, and, in fact, this way, one can even obtain an all-order expansion of the partition function as in \cite{BorGui1}; another approach uses the independent knowledge of the partition function up to order $N$, as in \cite{bekerman2017clt}. 

Here, we use discrepancy estimates for $\sineb$ and are able to show directly that $\Main$ is $o(1)$.
\end{itemize}

\subsubsection{Connections with other works}
When $\beta = 2$, the point process acquires a particularly rich \textit{determinantal} structure, allowing for many explicit computations. In this case, the CLT for fluctuations of smooth enough functions was known since \cite{spohn1987interacting}, see also \cite{soshnikov2000central}. Let us observe that, for $\beta = 2$, the CLT is known to hold as soon as the test function is in $H^{1/2}(\R)$, to be compared with the requirement that $\ophi \in C^4_c$ here. The optimal regularity condition needed in the general $\beta$ case is an open question.

For $\beta$ arbitrary, a CLT similar to Theorem \ref{theo:fluctuations} was announced by \cite{HLPVV}. The method is completely different, and uses the original description of $\sineb$ involving coupled stochastic differential equations.

Several facts concerning the number of points under $\sineb$ have been proven. A CLT follows from \cite{kritchevski2012scaling}, large deviations were proven in  \cite{holcomb2015large,holcomb2017overcrowding} and a maximal deviation result in \cite{HolcombePaquette18}. The transportation strategy does not accommodate well to non-smooth functions like indicator functions, and we are unable to easily retrieve these results with the present techniques. 

The \textit{rigidity} of the process in the sense of Ghosh-Peres, i.e. the fact that the knowledge of the configuration outside a given compact set almost surely prescribes the number of points in that set, was proven by \cite{chhaibi2018rigidity}, and also obtained in \cite{DLRpreprint} in a very different way. The proof of \cite{chhaibi2018rigidity} follows the approach of \cite{ghosh2017rigidity} and relies on the fact that the variance of linear statistics is controlled by the $H^{1/2}$ norm of the test function, which had been established for random matrix models and can be passed to the limit. We believe that our “statistical physics” approach could yield similar bounds, and hence the rigidity result, but one needs to go over all the estimates beyond the “rescaled cases”  $\varphi = \phil = \ophi(\cdot / \l)$ and state them in full generality, with controls depending more precisely on $\varphi$, we do not undertake this here.

\subsubsection{Plan of the paper}
\begin{itemize}
\item In Section \ref{sec:aprioribounds}, we discuss discrepancy estimates for the $\sineb$ process and state an \textit{a priori} bound on the fluctuations on linear statistics, in terms of the discrepancies. We will rely constantly on this bound in order to control the error terms in the Laplace transform expansion.
\item In Section \ref{sec:perturbationmeasure}, we define the \textit{perturbation measure}, which formally corresponds to the change induced on the average density of points when treating the test function $\varphi$ as an additional external field applied to each particle. This perturbation measure is slightly singular, and we work in fact with a regularized version, the \textit{approximate perturbation measure}
.\item In Section \ref{sec:transport}, we define the perturbed measure, the transport map from the original measure (the constant density) to the perturbed one, and we expand the energy along this transport.
\item In Section \ref{sec:compareener}, we compare the interaction energy before and after transport, and show that most terms are negligible.
\item In Section \ref{sec:finalproof}, we combine all previous elements to give the proof of the CLT.
\item Many parts of the argument are rather elementary, but involve some lengthy computations. For legibility, we have postponed most of the computations to Section \ref{sec:auxiliary}.
\end{itemize}

\subsection{Semi-norms}
We will often use $g^{(\k)}$ to denote the $\k$-th derivative of $g$.

\begin{defi}[Semi-norms and local semi-norms]
Let $g$ be a test function, compactly supported on $\R$. For $\k \geq 0$, if $g$ is assumed to be of class $C^\k$, we let 
$$
|g|_{\k} := \sup_{x} |g^{(\k)}(x)|,
$$
and for $x$ in $\R$, letting $\Vx$ denote the neighborhood $\Vx := [x-3, x+3]$, we write:
\begin{equation}
\label{def:Vdex}
|g|_{\k, \Vx} := \sup_{y \in \Vx} |g^{(\k)}(y)|.
\end{equation}
\end{defi}

The following bounds will be used repeatedly:
\begin{equation}
\label{bounds:rescale}
|\phil|_{\k} = \frac{1}{\l^\k} |\ophi|_{\k}, \quad \|\phil^{(\k)}\|_{L^{\p}} = \|\ophi\|_{L^{\p}} \l^{\frac{1}{\p} - \k}.
\end{equation}
\subsection{Discrepancy and discrepancy estimates}
Throughout the paper, an important role is played by the discrepancy estimates, for they provide an \textit{a priori} bound on the size of fluctuations that we will repeatedly use to control error terms. If $\C$ is a point configuration and $I$ is an interval, we denote by $\C_{I}$ the restriction of $\C$ to $I$.

\begin{defi}[Discrepancy]
\label{def:discrepancy}
Let $\C$ be a point configuration on $\R$, and let $I$ be an interval. The discrepancy of $\C$ in $I$ is the difference between the number of points of $\C$ in $I$ and its expected value, namely the length of $I$. We write
$$
\Discr_{I} := |\C_{I}| - |I| = \int \1_{I} (d\C(x) - dx).
$$
If $a,b$ are integers, with possibly $a > b$, we let
$$
\Discr_{[a,b]} := \int_{a}^b \1_{I} (d\C(x) - dx).
$$
\end{defi}
It is known, see e.g. \cite{Leble:2017mz}[Lemma 3.2] that, if $I$ has length at least $1$, we have
\begin{equation}
\label{discresA}
\Esp \left[ \left(\Discr_I\right)^2 \right] \preceq |I|, \quad \Esp \left[ | \Discr_{I} | \right| \preceq \sqrt{|I|}.
\end{equation}
Moreover, it was shown in \cite{Leble:2017mz}[Remark 3.3] that, for $\sineb$, it holds
$$
\liminf_{R \to \infty} \frac{1}{2R} \Esp \left[ \left(\Discr_{[-R,R]}\right)^2 \right] = 0,
$$
and careful inspection of the argument yields the stronger statement, proven in \cite{uniqueness}
\begin{equation}
\label{discrasym}
\Esp \left[ \left(\Discr_{[-R,R]}\right)^2 \right] = o(R).
\end{equation}
Of course, since $\sineb$ is stationary, it implies that the variance of the number of points in any interval of length $R$ is $o(R)$.

\subsection{A priori bound on the fluctuations}
\label{sec:aprioribounds}
We let $\Dt_i$ be the quantity
$$
\Dt_i := |\Discr_{[0, i]}| + |\Discr_{[i, i+1]}| + 1.
$$
\begin{prop}[A priori bound on the fluctuations]
\label{prop:aprioribounds}
Let $g$ be a test function of class $C^1$, compactly supported on $\R$. 
\begin{equation}
\label{aprioribound}
 \left| \int g(x) (d\C - dx) \right| \preceq \sum_{i=-\infty}^\infty |g|_{\1, \Vi}  \Dt_i[\C].	
\end{equation}
Moreover, for $\lambda$ fixed we may choose to replace $\Dt_i$ by either $\DL_i$ or $\DR_i$, with
$$
\DL_i := |\Discr_{[-\lambda, i]}| + |\Discr_{[i, i+1]}| + 1, \quad \DR_i  := |\Discr_{[i, \lambda]}| + |\Discr_{[i, i+1]}| + 1.
$$
\end{prop}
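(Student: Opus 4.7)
The plan is to decompose the integral interval by interval and then use Abel (discrete) summation by parts to convert the local fluctuations into the cumulative discrepancies $\Discr_{[0,i]}$.

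First, I would split
\[
\int g(x)(d\C(x) - dx) = \sum_{i \in \Z} \int_{[i, i+1]} g(x)(d\C(x) - dx),
\]
which is a finite sum because $g$ has compact support. On each slice $[i, i+1]$, write $g(x) = g(i) + (g(x) - g(i))$. The first piece contributes $g(i) \Discr_{[i, i+1]}$. The second piece is bounded crudely using $|g(x) - g(i)| \leq |g|_{\1, \Vi}$ on $[i, i+1]$ together with the fact that the total variation of $d\C - dx$ on $[i, i+1]$ is at most the number of points of $\C$ there plus $1$, i.e.\ at most $|\Discr_{[i, i+1]}| + 2$. This yields a term $\preceq |g|_{\1, \Vi}(|\Discr_{[i,i+1]}| + 1)$, which already fits inside the claimed bound using the $+1$ and the $|\Discr_{[i,i+1]}|$ in the definition of $\Dt_i$.

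It remains to handle $\sum_i g(i)\,\Discr_{[i, i+1]}$. Setting $S_i := \Discr_{[0,i]}$ (with the obvious sign convention for $i<0$), we have the telescoping identity $\Discr_{[i, i+1]} = S_{i+1} - S_i$. Abel summation, together with $g(i) = 0$ for $|i|$ large, gives
\[
\sum_{i\in\Z} g(i)(S_{i+1} - S_i) = \sum_{i\in\Z} (g(i-1) - g(i))\,S_i,
\]
and $|g(i-1) - g(i)| \leq |g|_{\1, \Vi}$ by the mean value theorem (with the constant $3$-neighborhood in the definition of $\Vi$ giving some slack). Combining this with the previous slice-by-slice estimate yields the a priori bound in terms of $\Dt_i$. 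For the variants with $\DL_i$ or $\DR_i$, the same computation applies with $S_i$ redefined as $\Discr_{[-\lambda, i]}$ or as $-\Discr_{[i, \lambda]}$: the telescoping identity still holds since these differ from $\Discr_{[0,i]}$ by an $i$-independent constant (and telescoping sums of constants against $g(i-1) - g(i)$ vanish), producing the desired bound.

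This proof is essentially just bookkeeping; the only mild subtlety is to be consistent with sign conventions and to absorb constant shifts into $\Dt_i$ via the $+1$, which is why I expect no real obstacle.
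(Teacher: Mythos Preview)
Your proposal is correct and follows essentially the same route as the paper's own proof: split into unit intervals, separate $g(i)$ from the increment via the mean value theorem, then Abel-sum the $g(i)\Discr_{[i,i+1]}$ terms using $\Discr_{[i,i+1]} = \Discr_{[0,i+1]} - \Discr_{[0,i]}$, and finally note that the same telescoping works with $\Discr_{[-\lambda,\cdot]}$ or $\Discr_{[\cdot,\lambda]}$ as the cumulative quantity.
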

The proof of Proposition \ref{prop:aprioribounds} is elementary, we postpone it to Section \ref{sec:proofaprioribounds}.

\begin{remark}[Bounds on $\Dt, \DL, \DR$]
In view of \eqref{discresA}, for $|i| \geq 1$, we have
\begin{equation}
\label{discrestimateA}
\Esp \left[ \left(\Dt_i\right)^2 \right] \preceq |i|, \quad \Esp \left[ \Dt_i \right] \preceq \sqrt{|i|}, 
\end{equation}
and in fact we have, in view of \eqref{discrasym}, as $|i| \to \infty$
\begin{equation}
\label{discrestimateB}
\Esp \left[ \left(\Dt_i\right)^2 \right] = o\left(|i|\right), \quad \Esp \left[ \Dt_i \right] = o \left( \sqrt{|i|} \right).
\end{equation}
We obtain similar estimates for $\DL_i$, resp. $\DR_i$ when replacing $|i|$ by $|\lambda + i|$, resp. $|\lambda - i|$.
\end{remark}

\section{The perturbation measure}
\label{sec:perturbationmeasure}
\subsection{The Cauchy principal value}
\begin{defi}[Cauchy principal value] 
\label{defi:CauchyPV}
Let $g$ be a test function of class $C^1$, compactly supported on $\R$. For $x$ in $\R$, we define
\begin{equation}
\label{def:CauchyPV}
\PV \int \frac{g(t)}{t-x} dt := \int_{0}^{+\infty} \frac{g(x+u) - g(x-u)}{u} du,
\end{equation}
where $\PV$ stands for “principal value”.
\end{defi}

\begin{defi}[The quantity $\HLP$]
\label{sec:finiteHilb}
For $x$ in $\R$, we define $\HLP(x)$ as
\begin{equation}
\label{def:HLP}
\HLP(x) := \frac{1}{\pi} \PV \int \frac{\sqrt{\lambda^2-t^2} \varphi'(t)}{t-x} dt.
\end{equation}
\end{defi}

\begin{remark}
Since $\varphi$ is at least, $C^2$ and compactly supported in $(-\l, \l)$, we can see $\phiL$, defined by
\begin{equation}
\label{def:philambda}
\phiL : t \mapsto \sqrt{\lambda^2 - t^2} \varphi'(t),
\end{equation}
as a compactly supported function of class $C^1$, so the “principal value” notation in \eqref{def:HLP} makes sense, in view of Definition \ref{defi:CauchyPV}.
\end{remark}

\subsection{The perturbation measure}
\begin{defi}[The perturbation measure]
For $x$ in $(-\lambda, \lambda)$, we define $\mulambda(x)$ as
\begin{equation}
\label{def:mu}
\mulambda(x) := \frac{-1}{\pi \sqrt{\lambda^2-x^2}} \HLP(x).
\end{equation}
The density $\mulambda$ will be called the \textit{perturbation measure}.
\end{defi}

\begin{defi}[The logarithmic potential of $\mulambda$]
For $x$ in $\R$, we let
\begin{equation}
\label{def:Gam}
\Gam(x) := \int - \log |x-y| \mulambda(y) dy.
\end{equation}
\end{defi}

\begin{lem}[Properties of the perturbation measure] 
\label{lem:propofperturb}
The density $\mulambda$ is integrable on $\Lambda$, of total mass $0$. The logarithmic potential generated by $\mulambda$ is well-defined and satisfies the following equation for $x$ in $\Lambda$
\begin{equation}
\label{hmuDSE}
\left(\Gam\right)'(x) = \varphi'(x).
\end{equation}
\end{lem}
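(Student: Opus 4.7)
I plan to establish the three claims in sequence, invoking classical facts about the finite Hilbert transform together with the Poincar\'e--Bertrand interchange formula.

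Since $\varphi$ is $C^4$ with support in $(-\l,\l)$ and $\l<\lambda/1000$, the function $\phiL(t)=\sqrt{\lambda^2-t^2}\,\varphi'(t)$ is $C^1$ with support in $(-\l,\l)$, and in particular vanishes on a neighborhood of $\pm\lambda$. By Definition \ref{defi:CauchyPV}, $\HLP(x)$ is well defined on all of $\R$, uniformly bounded, and even smooth in a neighborhood of $\pm\lambda$ (where the principal value collapses to an ordinary integral). Hence $\mulambda$ carries only the integrable endpoint singularity $1/\sqrt{\lambda^2-x^2}$ on $\Lambda$, which settles integrability.

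For the total mass, I would expand
\[
\int_{-\lambda}^{\lambda}\mulambda(x)\,dx = -\frac{1}{\pi^2}\int_{-\lambda}^{\lambda}\frac{dx}{\sqrt{\lambda^2-x^2}}\PV\int_{-\l}^{\l}\frac{\phiL(t)}{t-x}\,dt,
\]
justify Fubini by splitting according to $|x-t|\lessgtr\delta$ (using the $C^1$ regularity of $\phiL$ to bound the near-diagonal contribution by $O(\delta)$ and letting $\delta\to 0$), and reduce to the classical integral $\PV\int_{-\lambda}^\lambda\frac{dx}{(t-x)\sqrt{\lambda^2-x^2}}$. The substitution $x=\lambda\sin\theta$ followed by the Weierstrass change $u=\tan(\theta/2)$ reduces this to a rational-function integral whose partial-fraction decomposition cancels exactly, yielding $0$ for every $t\in(-\lambda,\lambda)$. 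Since $\phiL$ is supported in $(-\l,\l)\subset\Lambda$, the total mass is zero.

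For the derivative identity, $\Gam$ is well defined (integrable log against integrable measure) and mollification of $\log|\cdot|$ combined with dominated convergence yields
\[
(\Gam)'(x) = \PV\int\frac{\mulambda(y)}{y-x}\,dy = -\frac{1}{\pi^2}\PV\int\frac{dy}{(y-x)\sqrt{\lambda^2-y^2}}\PV\int\frac{\phiL(t)}{t-y}\,dt.
\]
The right-hand side is precisely the kind of iterated principal value treated by the Poincar\'e--Bertrand commutation formula applied to $\phi(y,t):=\phiL(t)/\sqrt{\lambda^2-y^2}$: interchanging the two principal values produces the correction $-\pi^2\phi(x,x)$ plus the swapped integral $\PV\int\phiL(t)\,dt\,\PV\int\frac{dy}{(y-x)(t-y)\sqrt{\lambda^2-y^2}}$. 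By the partial fraction $\frac{1}{(y-x)(t-y)}=\frac{1}{t-x}\bigl(\frac{1}{y-x}+\frac{1}{t-y}\bigr)$ together with the vanishing identity established in the previous paragraph, the inner $y$-integral is $0$ for $x,t\in\Lambda$, so the swapped integral vanishes. Meanwhile $\phi(x,x)=\phiL(x)/\sqrt{\lambda^2-x^2}=\varphi'(x)$ on $\Lambda$ (both sides vanish for $x\notin(-\l,\l)$). Putting the two pieces together, $(\Gam)'(x)=-(-\pi^2\varphi'(x))/\pi^2=\varphi'(x)$, as required.

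The main technical obstacle is the rigorous application of Poincar\'e--Bertrand with the weight $1/\sqrt{\lambda^2-y^2}$, which is integrable on $\Lambda$ but not H\"older at $\pm\lambda$. I would handle this by truncating the $y$-integration to $(-\lambda+\epsilon,\lambda-\epsilon)$, where both PV singularities $y=x$ and $y=t$ stay interior and the weight is uniformly H\"older, so that the formula applies in its textbook form. One then passes to the limit $\epsilon\to 0$ using integrability of the weight and the fact that $\phiL$ vanishes in a neighborhood of $\pm\lambda$, so that the truncated-versus-full integrals differ by $O(\epsilon^{1/2})$.
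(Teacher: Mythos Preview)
Your proposal is correct and follows essentially the same route as the paper. The paper's proof simply cites Tricomi's book for the two key facts---the vanishing of $\PV\int_{-\lambda}^{\lambda}\frac{dx}{(t-x)\sqrt{\lambda^2-x^2}}$ for $t\in(-\lambda,\lambda)$, and the airfoil equation $\PV\int\frac{\mulambda(t)}{t-x}\,dt=\varphi'(x)$---whereas you unpack both: the first by an explicit trigonometric substitution, and the second via the Poincar\'e--Bertrand interchange formula, which is precisely the classical derivation underlying the result the paper cites.
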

These properties are well-known and we refer to the book \cite{MR0094665}, see also Section~\ref{sec:proofpropofperturb}.

\subsection{Bounds on the perturbation measure}
\begin{lem}[Bounds on the perturbation measure $\mulambda$]
\label{lem:boundsonmulambda} 
We have
\begin{align}
\label{mulambda0} & \mulambda \preceq 
\begin{cases}
\frac{1}{\l} & |x| \leq 2\l, \\
\frac{\sqrt{\lambda} \l}{x^2 \sqrt{\lambda-|x|}} & |x| \geq 2\l
\end{cases}, \\
\label{mulambda1} 
& \mulambda^{(\1)} \preceq \begin{cases}
\frac{1}{\l^2} & |x| \leq 2\l, \\
\frac{\l}{|x| \sqrt{\lambda} (\lambda - |x|)^{3/2}} + \frac{\sqrt{\lambda} \l}{|x|^3 \sqrt{\lambda-|x|}} & |x| \geq 2\l
\end{cases}, \\
\label{mulambda2} 
& \mulambda^{(\2)} \preceq \begin{cases}
\frac{1}{\l^3} & |x| \leq 2\l, \\
\frac{\l}{\lambda^{3/2} (\lambda - |x|)^{5/2}} + \frac{\l}{x^2 \lambda^{1/2} (\lambda - |x|)^{3/2}} + \frac{\sqrt{\lambda} \l}{x^4 \sqrt{\lambda-|x|}} & |x| \geq 2\l
\end{cases}.
\end{align}
\end{lem}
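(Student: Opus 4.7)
The plan is to analyze the (PV) integral
$$
J(x) := \PV \int \frac{\phiL(t)}{t-x}\, dt, \qquad \phiL(t) := \sqrt{\lambda^2-t^2}\, \varphi'(t),
$$
so that $\mulambda(x) = -J(x)/(\pi^2\sqrt{\lambda^2-x^2})$, splitting the estimates into the near regime $|x| \leq 2\l$ and the far regime $|x| \geq 2\l$, and exploiting throughout the rescaling $\varphi = \ophi(\cdot/\l)$. For brevity, write $(Hg)(x) := \frac{1}{\pi}\PV\int g(t)/(t-x)\,dt$ for the Hilbert transform.

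\textbf{Near regime $|x| \leq 2\l$.} Here both $\sqrt{\lambda^2-x^2}$ and $\sqrt{\lambda^2-t^2}$ for $|t|\leq \l$ are $\asymp \lambda$. I would isolate the singular part using the identity $\sqrt{\lambda^2-t^2} - \sqrt{\lambda^2-x^2} = -(t-x)(t+x)/(\sqrt{\lambda^2-t^2}+\sqrt{\lambda^2-x^2})$, yielding
$$
J(x) = \pi\sqrt{\lambda^2-x^2}\,(H\varphi')(x) \;-\; \int \frac{(t+x)\,\varphi'(t)}{\sqrt{\lambda^2-t^2}+\sqrt{\lambda^2-x^2}}\, dt.
$$
The change of variables $s=t/\l$ gives $(H\varphi^{(\k)})(x) = \l^{-\k}(H\ophi^{(\k)})(x/\l)$, and since $\ophi \in C^4_c$ is fixed, $\|H\ophi^{(\k)}\|_\infty \preceq 1$ for $\k \leq 3$. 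Hence the first piece is $\preceq \lambda/\l$, while the remainder integral is $\preceq \l/\lambda$ via $|t+x|\preceq \l$ and $\|\varphi'\|_{L^1}\preceq 1$. Dividing by $\sqrt{\lambda^2-x^2}\asymp \lambda$ gives $|\mulambda(x)|\preceq 1/\l$. For $\mulambda^{(\1)}$ and $\mulambda^{(\2)}$, I would differentiate $\mulambda$ by Leibniz's rule: differentiation under the PV integral followed by integration by parts (using $\phiL(\pm\l)=0$, and similarly for its derivatives) gives $J^{(\k)}(x) = \pi(H\phiL^{(\k)})(x)$; since the dominant piece of $\phiL^{(\k)}$ is $\lambda\varphi^{(\k+1)}$, one obtains $|J^{(\k)}(x)|\preceq \lambda/\l^{\k+1}$. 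Each derivative of $1/\sqrt{\lambda^2-x^2}$ contributes at most $1/\lambda^{j+1}$ in this range, and combining through Leibniz gives the claimed bounds $1/\l^2$ and $1/\l^3$.

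\textbf{Far regime $|x| \geq 2\l$.} There is no singularity here, and $|t-x|\geq |x|/2$. The naive bound $|J(x)|\preceq \lambda/|x|$ is insufficient; I would exploit the cancellation $\int \varphi'\,dt = 0$ via integration by parts: $J(x) = -\int \varphi(t)\, f'(t)\, dt$ with $f(t) = \sqrt{\lambda^2-t^2}/(t-x)$, from which
$$
|J(x)| \preceq \|\varphi\|_\infty \int_{-\l}^{\l}\!\left( \frac{\l}{\lambda|x|} + \frac{\lambda}{x^2} \right) dt \preceq \frac{\lambda\l}{x^2}.
$$
Since $1/\sqrt{\lambda^2-x^2}\asymp 1/(\sqrt{\lambda}\sqrt{\lambda-|x|})$ for $2\l \leq |x| \leq \lambda$, this yields the bound \eqref{mulambda0}. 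For $\mulambda^{(\1)}$ and $\mulambda^{(\2)}$, each Leibniz-rule term either differentiates $1/\sqrt{\lambda^2-x^2}$ (producing a factor of size $\preceq 1/(\lambda-|x|)$) or $J$ (producing, by the same integration-by-parts trick applied to $\sqrt{\lambda^2-t^2}/(t-x)^{j+1}$, an extra factor $1/|x|$). This accounts exactly for the several terms appearing in \eqref{mulambda1}--\eqref{mulambda2}.

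\textbf{Main obstacle.} The delicate part is the far regime, where several competing factors must be tracked simultaneously: the multipole decay $\l/|x|$ from the cancellation $\int \varphi' = 0$ and the edge singularity $(\lambda-|x|)^{-k-1/2}$ from repeated differentiation of $(\lambda^2-x^2)^{-1/2}$. Bookkeeping all the Leibniz contributions so that each fits cleanly into one of the stated terms is straightforward but computational; conceptually, no idea beyond scaling plus integration by parts is required.
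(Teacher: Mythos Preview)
Your proposal is correct and shares the paper's overall architecture: first bound $J=\pi\HLP$ and its derivatives, then combine with derivatives of $(\lambda^2-x^2)^{-1/2}$ via Leibniz. The tactics at the two key steps, however, differ from the paper's.

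In the near regime $|x|\le 2\l$, the paper proves an auxiliary estimate $\bigl|\PV\!\int g(t)/(t-x)\,dt\bigr|\preceq \l^{1/2}\|g'\|_{L^2}$ for any $g\in C^1_c(-\l,\l)$, and applies it to $g=\phiL^{(\k)}$, $\k=0,1,2$. Your algebraic splitting $\sqrt{\lambda^2-t^2}=\sqrt{\lambda^2-x^2}+O(|t+x|/\lambda)$ reduces instead to the Hilbert transform of $\varphi^{(\k+1)}$ itself and then exploits exact scaling $(H\varphi^{(\k)})(x)=\l^{-\k}(H\ophi^{(\k)})(x/\l)$. Both are clean; the paper's $L^2$ claim is a general-purpose tool, while your scaling argument is shorter once one accepts that the sub-dominant pieces of $\phiL^{(\k)}$ (those carrying derivatives of $\sqrt{\lambda^2-t^2}$) are lower order---you assert this but do not fully verify it, so a sentence making that explicit would tighten the argument.

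In the far regime $|x|\ge 2\l$, the paper Taylor-expands $1/(t-x)$ and invokes successive moment cancellations ($\int\varphi'=0$, $\int t\varphi^{(2)}=0$, $\int t^2\varphi^{(3)}=0$) to gain powers of $\l/|x|$. Your single integration by parts, transferring the derivative from $\varphi'$ onto $\sqrt{\lambda^2-t^2}/(t-x)^{j+1}$, achieves the same gain more economically and avoids tracking which moments vanish at each order. This is a genuine simplification of that step.
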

Lemma \ref{lem:boundsonmulambda} follows from elementary computations, see Section \ref{sec:proofboundsonmu}.

\subsection{The approximate perturbation measure}
\label{sec:approximatemu}
The perturbation measure $\mulambda$ satisfies the exact relation \eqref{hmuDSE}, but is singular near $\pm \lambda$. We will work instead with an \textit{approximate} perturbation measure $\muL$, constructed below, which is more regular, and in fact vanishes near the endpoints. Of course, passing from $\mulambda$ to $\muL$ induces an error on the logarithmic potential, which we need to control.

\begin{lem}[The approximate perturbation measure]
\label{lem:approximatemu}
There exists a function $\muL$ of class $C^2$, compactly supported in $(-\lambda, \lambda)$, satisfying:
\begin{enumerate}
\item $\muL$ = $\mulambda$ on $[- \lambda + \L, \lambda - \L]$.
\item The masses of $\muL$ and $\mulambda$ coincide near each endpoint, i.e.
\begin{equation}
\label{massisconserved}
\int_{-\lambda}^{-\lambda + \L} \muL = \int_{-\lambda}^{-\lambda + \L} \mulambda, \quad \int_{\lambda - \L}^{\lambda} \muL = \int_{\lambda - \L}^{\lambda} \mulambda
\end{equation}
\item For $x$ in $[-\lambda, -\lambda + \L] \cup [\lambda - \L, \lambda]$, and for any $\k = \0, \1, \2$ we have the bound
\begin{equation}
\label{bound:muL}
|\muL^{(\k)}(x)| \preceq \frac{1}{\L^\k} \frac{\l}{\lambda^{3/2} \L^{1/2}} .
\end{equation}
with implicit multiplicative constants depending on $\k$ and $\ophi$, but not on $\l, \lambda, x$.
\item $\muL$ is identically $0$ on $[-\lambda, -\lambda + \L/4]$ and on $[\lambda - \L/4, \lambda]$.
\end{enumerate} 
\end{lem}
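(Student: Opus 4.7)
The plan is to keep $\mulambda$ unchanged on the interior interval $[-\lambda + \L, \lambda - \L]$ and to modify it only on the two boundary strips. I treat the right strip $[\lambda - \L, \lambda]$ below; the left strip is handled by a mirror construction. Because $\lambda \gg \l$, on this strip we have $|x| \geq 2\l$, and the bound \eqref{mulambda0} reads $|\mulambda(x)| \preceq \l / (\lambda^{3/2} \sqrt{\lambda - |x|})$. Integrating, the total mass $\int_{\lambda - \L}^{\lambda}\mulambda$ is at most of order $\l \sqrt{\L}/\lambda^{3/2}$, and redistributing such a mass smoothly over an interval of length of order $\L$ will produce a density of size $\l / (\lambda^{3/2} \L^{1/2})$, which is precisely the target \eqref{bound:muL} at derivative order $\k = \0$. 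All other bounds will come for free by dimensional analysis in $\L$.

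Concretely, I fix once and for all a $C^2$ cutoff $\chi : \R \to [0,1]$ with $\chi \equiv 1$ on $(-\infty, \lambda - 7\L/8]$ and $\chi \equiv 0$ on $[\lambda - 5\L/8, +\infty)$, together with a nonnegative $C^2$ bump $\psi$ supported in $[\lambda - \L/2, \lambda - \L/4]$ of integral $1$, designed so that $|\chi^{(\k)}| \preceq \L^{-\k}$ and $|\psi^{(\k)}| \preceq \L^{-\k-1}$ for $\k = \0, \1, \2$. On the right strip I set
\begin{equation*}
\muL(x) := \chi(x)\, \mulambda(x) + c_{+}\, \psi(x), \qquad c_{+} := \int_{\lambda - \L}^{\lambda} \mulambda - \int_{\lambda - \L}^{\lambda} \chi\, \mulambda,
\end{equation*}
and do the symmetric thing on the left with a constant $c_{-}$. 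Property (1) holds because $\chi \equiv 1$ and $\psi \equiv 0$ on $[-\lambda + \L, \lambda - \L]$; property (4) holds because $\chi \equiv 0$ and $\psi \equiv 0$ on $[\lambda - \L/4, \lambda]$; the mass-matching (2) is enforced by the definition of $c_{\pm}$.

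The core verification is the derivative bound (3). On the subregion $[\lambda - \L, \lambda - 7\L/8]$ where $\chi \equiv 1$ and $\psi \equiv 0$, we have $\muL = \mulambda$, and since $\lambda - |x| \asymp \L$ and $|x| \asymp \lambda$, inspection of \eqref{mulambda0}--\eqref{mulambda2} gives $|\mulambda^{(\k)}(x)| \preceq \l / (\lambda^{3/2} \L^{\k + 1/2})$ (the first term in each bound dominates once $\L \ll \lambda$), which is exactly what is required. On the transition region $[\lambda - 7\L/8, \lambda - \L/4]$, Leibniz's rule yields
\begin{equation*}
\muL^{(\k)}(x) = \sum_{j = 0}^{\k} \binom{\k}{j} \chi^{(j)}(x)\, \mulambda^{(\k - j)}(x) + c_{+}\, \psi^{(\k)}(x),
\end{equation*}
and each mixed term is controlled by $\L^{-j} \cdot \l \lambda^{-3/2} \L^{-(\k - j) - 1/2} = \l / (\lambda^{3/2} \L^{\k + 1/2})$. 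For the correction term, the integral estimate above gives $|c_{+}| \preceq \l \sqrt{\L}/\lambda^{3/2}$, so $|c_{+}\, \psi^{(\k)}| \preceq \l / (\lambda^{3/2} \L^{\k + 1/2})$ as well.

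No step is hard, and the main obstacle is purely bookkeeping: one must arrange the supports and scales of $\chi$ and $\psi$ so that both the cutoff contribution and the mass-correction contribution fall on the right-hand side of \eqref{bound:muL}. Once the single length-scale $\L$ is fixed, every factor of $\L$ tracks automatically and the three bounds collapse to the claimed form $\L^{-\k} \cdot \l / (\lambda^{3/2} \L^{1/2})$.
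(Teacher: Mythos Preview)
Your construction is correct and complete. The key observation that the transition zone has $\lambda - |x| \asymp \L$, so that the bounds \eqref{mulambda0}--\eqref{mulambda2} collapse to $|\mulambda^{(\k)}(x)| \preceq \l/(\lambda^{3/2}\L^{\k+1/2})$, is exactly what makes every term in the Leibniz expansion line up with the target \eqref{bound:muL}. The $C^2$ gluing at $\lambda - \L$ is automatic because $\chi \equiv 1$ in a full neighborhood of that point, and the $C^2$ gluing at $\lambda - \L/4$ follows since $\chi$ and $\psi$ are both identically zero there.

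Your route differs from the paper's. The paper does not multiply $\mulambda$ by a cutoff; instead, on $[-\lambda + \L/2, -\lambda + \L]$ it replaces $\mulambda$ by an explicit function $\Rr$ built from three fixed model profiles $S_a, S_b, S_c$, scaled so that $\Rr$ matches the $0$th, $1$st, and $2$nd derivatives of $\mulambda$ at the junction point $-\lambda + \L$ and vanishes (to all orders) at $-\lambda + \L/2$. A separate bump $\Tt$ on $[-\lambda + \L/4, -\lambda + \L/2]$ then corrects the mass, exactly as your $c_+\psi$ does. Your cutoff approach is the more standard mollification idiom and is shorter: the jet-matching step in the paper is replaced by the trivial fact that $\chi \equiv 1$ near the junction. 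The paper's construction, on the other hand, makes the modified piece completely explicit (a product of three rescaled universal profiles times a single constant $\D_0$), which could in principle be useful if one wanted sharper or more structured information about $\muL$ near the edge; for the purposes of this lemma, however, your argument delivers everything that is needed with less bookkeeping.
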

The construction of $\muL$ is given in Section \ref{sec:proofapproximatemu}. 

\begin{lem}[Additional properties of $\muL$]
\label{lem:boundsonmuL}
\begin{align}
\label{muL0} & \muL(x) \preceq
\begin{cases}
\frac{1}{\l} & |x| \leq 2\l, \\
\frac{\l}{x^2} & 2\l \leq |x| \leq \lambda/2 \\
\frac{\l}{\lambda^{3/2} \sqrt{\lambda - x}} & \lambda/2 \leq |x| \leq \lambda - \L, \\
\frac{\l}{\lambda^{3/2} \L^{1/2}} & \lambda - \L \leq |x| \leq \lambda
\end{cases}, \\
\label{muL1} & \muL^{(1)}(x) \preceq 
\begin{cases}
\frac{1}{\l^2} & |x| \leq 2 \l \\
\frac{\l}{|x|^3} & 2\l \leq |x| \leq \lambda/2, \\
\frac{\l}{\lambda^{3/2} (\lambda - |x|)^{3/2}} & \lambda/2 \leq |x| \leq \lambda - \L, \\
\frac{\l}{\lambda^{3/2} \L^{3/2}} & \lambda - \L \leq |x| \leq \lambda
\end{cases},
\\
\label{muL2} & \muL^{(2)}(x) \preceq 
\begin{cases}
\frac{1}{\l^3} & |x| \leq 2 \l \\
\frac{\l}{x^4} & 2\l \leq |x| \leq \lambda/2, \\
\frac{\l}{\lambda^{3/2} (\lambda - |x|)^{5/2}} & \lambda/2 \leq |x| \leq \lambda - \L, \\
\frac{\l}{\lambda^{3/2} \L^{5/2}} & \lambda - \L \leq |x| \leq \lambda
\end{cases},
\\
\label{muLL1} & \|\muL\|_{L^1} \preceq 1.
\end{align}
\end{lem}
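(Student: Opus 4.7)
The plan is to decompose the real line into the four regions appearing in the statement,
\[
|x|\leq 2\l,\qquad 2\l\leq|x|\leq \lambda/2,\qquad \lambda/2\leq|x|\leq \lambda-\L,\qquad \lambda-\L\leq|x|\leq \lambda,
\]
and to establish each of \eqref{muL0}--\eqref{muL2} region by region. In the first three regions the range is contained in $[-\lambda+\L,\lambda-\L]$ (using \eqref{lvslambda} to ensure $\L\leq \lambda/2$), so by item (1) of Lemma \ref{lem:approximatemu} the function $\muL$ agrees with $\mulambda$ there, and I simply invoke the bounds \eqref{mulambda0}, \eqref{mulambda1}, \eqref{mulambda2} of Lemma \ref{lem:boundsonmulambda} and simplify according to the size of $|x|$ relative to $\lambda$. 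In the boundary layer $\lambda-\L\leq|x|\leq \lambda$, the desired estimate is a direct restatement of item (3) of Lemma \ref{lem:approximatemu}, namely $|\muL^{(\k)}|\preceq \L^{-\k}\cdot \l/(\lambda^{3/2}\L^{1/2})$.

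The simplifications in the central regions are elementary. On $2\l\leq|x|\leq\lambda/2$, one has $\lambda-|x|\asymp\lambda$, so the factor $\sqrt{\lambda-|x|}$ appearing in the denominators of \eqref{mulambda0}--\eqref{mulambda2} is comparable to $\sqrt{\lambda}$; after this cancellation, the surviving terms yield the bounds $\l/x^{2}$, $\l/|x|^{3}$ and $\l/x^{4}$ respectively, matching \eqref{muL0}--\eqref{muL2}. On $\lambda/2\leq|x|\leq\lambda-\L$, one has $|x|\asymp \lambda$, so the negative powers of $|x|$ can be replaced by the corresponding powers of $\lambda$, and the estimates collapse to $\l/(\lambda^{3/2}(\lambda-|x|)^{\k+1/2})$, which is again the claim. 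In each subregion I verify that the subdominant summands of \eqref{mulambda1} and \eqref{mulambda2} are controlled by the leading one; this follows from the elementary inequalities $|x|\leq\lambda$ and $\lambda-|x|\leq\lambda$.

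For the $L^{1}$ bound \eqref{muLL1}, the strategy is to integrate the pointwise estimate \eqref{muL0} over each of the four regions. The central piece contributes $\preceq \l\cdot (1/\l)=O(1)$; the region $2\l\leq|x|\leq \lambda/2$ contributes $\preceq \l\int_{2\l}^{\infty}dx/x^{2}=O(1)$; the intermediate region contributes $\preceq (\l/\lambda^{3/2})\int_{\L}^{\lambda/2}du/\sqrt{u}\preceq \l/\lambda$ after the substitution $u=\lambda-|x|$; and the boundary layer contributes $\preceq (\l/(\lambda^{3/2}\L^{1/2}))\cdot\L=\l\L^{1/2}/\lambda^{3/2}$. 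Under \eqref{lvslambda} the three tail contributions are each $o(1)$, and the total is $\preceq 1$ as claimed.

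No serious obstacle is expected; the bulk of the argument is bookkeeping, comparing the various powers of $|x|$, $\lambda$, and $\lambda-|x|$ in the bounds coming from Lemma \ref{lem:boundsonmulambda}. The one place where attention is needed is checking, among the two or three additive terms in \eqref{mulambda1}--\eqref{mulambda2}, which one dominates in each intermediate region; this is a short direct verification using $|x|\leq\lambda$ and $\lambda-|x|\leq \lambda$.
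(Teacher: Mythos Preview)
Your proposal is correct and follows essentially the same approach as the paper: use $\muL=\mulambda$ on $[-\lambda+\L,\lambda-\L]$ together with the bounds \eqref{mulambda0}--\eqref{mulambda2} for the first three regions, invoke \eqref{bound:muL} for the boundary layer, and integrate \eqref{muL0} over the four regions to get \eqref{muLL1}. The paper's proof is simply a terser version of yours, omitting the explicit verification of which summand dominates in each intermediate region.
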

\begin{proof}
The first three inequalities are consequences of \eqref{mulambda0}, \eqref{mulambda1}, \eqref{mulambda2}, for $|x| \leq 2\l$ and $2\l \leq |x| \leq \lambda - \L$, because we do not change the measure there. They follow from \eqref{bound:muL} for $\lambda - \L \leq |x| \leq \lambda$. 

To obtain \eqref{muLL1}, we split the integral into four parts:
$$
\int_{|t| \leq 2\l} |\muL(t)| dt + \int_{2\l \leq |t| \leq \lambda /2} |\muL(t)| dt + \int_{\lambda /2 \leq |t| \leq \lambda - \L} |\muL(t)| dt + \int_{\lambda - \L \leq |t| \leq \lambda}  |\muL(t)| dt,
$$
then \eqref{muLL1} follows from \eqref{muL0} and an elementary computation.
\end{proof}

\subsection{The error on the logarithmic potential}
\begin{defi}[Error on the logarithmic potential]
We introduce the quantity
\begin{equation}
\label{def:ErrorLog}
\ErrorLog(x) := \int - \log |x-y| \left( \muL(y) - \mulambda(y) \right) dy.
\end{equation}
\end{defi}

\begin{prop}[Error on the logarithmic potential]
\label{prop:logpotmuL}
We have
\begin{equation*}
\ErrorLog = \ErrorLogL + \ErrorLogR,
\end{equation*}
where $\ErrorLogL$, $\ErrorLogR$ satisfy:
\begin{equation}
\label{bound:ErrorLog0} 
\ErrorLogL(x) \preceq \frac{\l^{3/2} \log(\lambda)}{\lambda^{3/2}}, \quad \ErrorLogR(x) \preceq \frac{\l^{3/2} \log(\lambda)}{\lambda^{3/2}}, \quad \text{ for } |x| \leq 2 \lambda.
\end{equation}
and 
\begin{equation}
\label{bound:ErrorLog1}
\begin{cases}
 \left(\ErrorLogL\right)^{(\1)}(x) \preceq \frac{\l^{5/2}}{\lambda^{3/2} (-\lambda - x)^2} &  |x - (-\lambda)| \geq 2\L, \\
  \left(\ErrorLogR\right)^{(1)}(x) \preceq \frac{\l^{5/2}}{\lambda^{3/2} (\lambda - x)^2}, &  |\lambda - x| \geq  2\L.
\end{cases}
\end{equation}
\end{prop}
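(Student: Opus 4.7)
The plan is to exploit two features of the approximate perturbation measure $\muL$ highlighted in Lemma~\ref{lem:approximatemu}: first, $\muL - \mulambda$ is supported in the two disjoint intervals $[-\lambda, -\lambda + \L]$ and $[\lambda - \L, \lambda]$ (property (1)); second, its restriction to each interval has \emph{total mass zero} (property (2)). The first observation gives the natural splitting $\ErrorLog = \ErrorLogL + \ErrorLogR$, where $\ErrorLogL$ collects the contribution of $y \in [-\lambda, -\lambda+\L]$ and $\ErrorLogR$ that of $y \in [\lambda-\L,\lambda]$. By symmetry it suffices to bound $\ErrorLogL$.

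The first preparatory step is an $L^1$ bound on $\muL - \mulambda$ over $[-\lambda, -\lambda+\L]$. From \eqref{mulambda0} in the regime $\lambda/2 \leq |y| \leq \lambda-\L$ one has $|\mulambda(y)| \preceq \frac{\l}{\lambda^{3/2}\sqrt{\lambda-|y|}}$, which integrates to $\preceq \frac{\l \L^{1/2}}{\lambda^{3/2}}$, while \eqref{bound:muL} with $\k = \0$ gives $\|\muL\|_{L^\infty([-\lambda,-\lambda+\L])} \preceq \frac{\l}{\lambda^{3/2}\L^{1/2}}$, hence the same $L^1$ size after multiplying by $\L$. In particular,
\begin{equation*}
\int_{-\lambda}^{-\lambda+\L} |\muL(y) - \mulambda(y)|\, dy \preceq \frac{\l \L^{1/2}}{\lambda^{3/2}}.
\end{equation*}

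For the size bound, I would use mass conservation to subtract a reference constant, writing, for an arbitrary reference point $y_0 \in [-\lambda, -\lambda+\L]$ (say $y_0 = -\lambda + \L/2$),
\begin{equation*}
\ErrorLogL(x) = -\int_{-\lambda}^{-\lambda+\L} \log\frac{|x-y|}{|x-y_0|}\, (\muL(y) - \mulambda(y))\, dy.
\end{equation*}
When $|x+\lambda| \geq 2\L$, the kernel is pointwise $\preceq \L/|x+\lambda|$, yielding a bound $\preceq \frac{\l \L^{3/2}}{\lambda^{3/2}|x+\lambda|}$, which is much smaller than the target. When $|x+\lambda| < 2\L$, direct integration of $|\log|x-y||$ against the density bound generates an extra $\log \lambda$ factor and gives $\preceq \frac{\l \L^{1/2} \log\lambda}{\lambda^{3/2}}$. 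Since $\L = \l$ in the paper's notation, both cases are bounded by $\frac{\l^{3/2} \log \lambda}{\lambda^{3/2}}$, which is the content of \eqref{bound:ErrorLog0}.

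For the derivative bound, I differentiate under the integral and apply mass conservation once more:
\begin{equation*}
\bigl(\ErrorLogL\bigr)'(x) = -\int_{-\lambda}^{-\lambda+\L} \left(\frac{1}{x-y} - \frac{1}{x-y_0}\right) (\muL(y) - \mulambda(y))\, dy.
\end{equation*}
When $|x+\lambda| \geq 2\L$, both $|x-y|$ and $|x-y_0|$ are comparable to $|x+\lambda|$, so the kernel is bounded by $\frac{|y - y_0|}{|x-y|\,|x-y_0|} \preceq \frac{\L}{|x+\lambda|^2}$. Combining with the $L^1$ estimate above yields $\preceq \frac{\l \L^{3/2}}{\lambda^{3/2}|x+\lambda|^2} = \frac{\l^{5/2}}{\lambda^{3/2}|x+\lambda|^2}$, giving \eqref{bound:ErrorLog1}. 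The main subtlety is the case $|x+\lambda| < 2\L$ in the size bound, where one must absorb the logarithmic singularity directly rather than via a Lipschitz-type argument; everything else is a routine consequence of mass cancellation and the $L^\infty$/$L^1$ bounds from Lemmas~\ref{lem:boundsonmulambda} and~\ref{lem:approximatemu}.
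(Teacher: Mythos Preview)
Your approach is correct and close to the paper's. For the derivative bound \eqref{bound:ErrorLog1}, your subtraction of $\frac{1}{x-y_0}$ using mass conservation is exactly the paper's Taylor-expansion argument, just phrased differently: the paper writes $\frac{-1}{x-y} = \frac{-1}{x-\lambda} + O\bigl(\frac{\L}{(x-\lambda)^2}\bigr)$, kills the leading term via \eqref{massisconserved}, and bounds the remainder by the same $L^1$ mass $\frac{\l\L^{1/2}}{\lambda^{3/2}}$ you computed.

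For the sup-norm bound \eqref{bound:ErrorLog0} there is a genuine (if minor) methodological difference. You invoke mass conservation also here, subtracting $\log|x-y_0|$ to obtain a decaying bound $\frac{\l\L^{3/2}}{\lambda^{3/2}|x+\lambda|}$ in the far region $|x+\lambda|\geq 2\L$; the paper does not bother with this refinement and simply applies the triangle inequality $|\ErrorLogL(x)|\leq\int|\log|x-y||\bigl(|\muL|+|\mulambda|\bigr)\,dy$ uniformly in $x$. Your route gives a sharper pointwise estimate away from the endpoint, though this is never used downstream.

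The one place that needs tightening is your ``direct integration'' in the near region $|x+\lambda|<2\L$. You cannot literally multiply the $L^1$ norm of $\muL-\mulambda$ by a supremum of $|\log|x-y||$, and the $\mulambda$ piece carries a $\frac{1}{\sqrt{\lambda+y}}$ singularity that may coincide with the logarithmic one (e.g.\ when $x=-\lambda$). The paper resolves this by a H\"older inequality with exponents $(3,\tfrac32)$, pairing $\bigl(\int|\log|x-y||^3\bigr)^{1/3}\preceq \L^{1/3}\log\lambda$ against $\bigl(\int(\lambda+y)^{-3/4}\bigr)^{2/3}\preceq\L^{1/6}$, which yields exactly your claimed $\L^{1/2}\log\lambda$ factor. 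You correctly flag this as the subtle case, so the gap is expository rather than conceptual.
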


The proof of Proposition \ref{prop:logpotmuL} is given in Section \ref{sec:logputmuL}.

\subsection{The variance term}

\begin{lem}[The variance term]
\label{lem:variance}
We have the following identity
\begin{equation}
\label{energiemsLm}
\iint - \log |x-y| \muL(x) \muL(y) dx dy = 2 \ophiH^2 + \ErrorVar,
\end{equation}
with $\ErrorVar$ bounded as follows
\begin{equation}
\label{erreur:variance}
\ErrorVar \preceq \frac{\l^3 \log(\lambda)}{\lambda^{3}} + \frac{\l^2}{\lambda^2}.
\end{equation}
In particular we obtain
\begin{equation}
\label{small:ErrorVar} s^2 \ErrorVar = s^2 \pto.
\end{equation}
\end{lem}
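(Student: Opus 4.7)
The plan is to decompose the self-energy via $h^{\muL} = \Gam + \ErrorLog$, identify the leading contribution with $2\ophiH^2$ through a Hilbert transform/Plancherel identity, and control the boundary corrections using Proposition \ref{prop:logpotmuL} together with the mean-zero property of $\muL$.

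First I rewrite the left-hand side of \eqref{energiemsLm} as $\int h^{\muL}(x)\muL(x)\,dx$ where $h^{\muL}(x) := \int -\log|x-y|\muL(y)\,dy$, and decompose $h^{\muL} = \Gam + \ErrorLog$ by \eqref{def:Gam} and \eqref{def:ErrorLog}. By Lemma \ref{lem:propofperturb}, $\Gam = \varphi + c$ on $\Lambda$ for some constant $c$; by \eqref{massisconserved} combined with the zero total mass of $\mulambda$, the measure $\muL$ also has zero total mass, so the $c$ term drops. Since $\supp(\varphi) \subset (-\l,\l) \subset \{\muL = \mulambda\}$ (by \eqref{lvslambda} and Lemma \ref{lem:approximatemu}(1)), we obtain
\[
\iint -\log|x-y|\muL(x)\muL(y)\,dx\,dy = \int \varphi\,\mulambda + \int \ErrorLog\,\muL.
\]

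Next, I compute $\int \varphi\,\mulambda$ by unfolding it via \eqref{def:mu}--\eqref{def:HLP} into a double principal-value integral and Taylor-expanding $\sqrt{\lambda^2-t^2}/\sqrt{\lambda^2-x^2} = 1 + (x^2-t^2)/(2\lambda^2) + O((\l/\lambda)^4)$, valid on $\supp(\varphi)\times\supp(\varphi')$. The leading term reorganizes as $\frac{1}{\pi}\int \varphi\,\mathcal{H}[\varphi']\,dx$ with $\mathcal{H}$ the Hilbert transform, and Plancherel (applied to the Fourier multiplier $2\pi|\xi|$ of $\mathcal{H}\circ\partial_x$), combined with the rescaling invariance of the $H^{1/2}$ norm, gives $2\ophiH^2$. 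The quadratic correction, after the simplification $(x^2-t^2)/(t-x) = -(x+t)$ and using $\int \varphi' = 0$, becomes $-(\int\varphi)^2/(2\pi^2\lambda^2)$, which is $O(\l^2/\lambda^2)$ since $\int\varphi = \l\int\ophi$; higher-order terms in the Taylor expansion are strictly smaller.

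The remaining term $\int \ErrorLog\,\muL$ requires more care: a direct $L^\infty$-$L^1$ estimate from \eqref{bound:ErrorLog0} and \eqref{muLL1} only yields $O(\l^{3/2}\log\lambda/\lambda^{3/2})$, which does not vanish under \eqref{lvslambda}. Instead I use the symmetry of the logarithmic kernel to write $\int \ErrorLogR\,\muL = \int_{[\lambda-\L,\lambda]}(\muL-\mulambda)(y)\,h^{\muL}(y)\,dy$, and the fact that $\muL - \mulambda$ has zero mass on $[\lambda-\L,\lambda]$ (by \eqref{massisconserved}) to subtract the constant $h^{\muL}(\lambda)$, reducing to the oscillation of $h^{\muL}$ on the endpoint interval. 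Expanding $(h^{\muL})'(y) = \int \muL(x)/(y-x)\,dx$ in inverse powers of $y$, the leading $1/y$ term vanishes by $\int \muL = 0$, giving $|(h^{\muL})'(y)| \preceq \l/\lambda^2$ for $y$ near $\lambda$; combined with Lemma \ref{lem:boundsonmuL} this produces $|\int \ErrorLogR\muL| \preceq \l^3\log\lambda/\lambda^3$, with the analogous estimate for $\ErrorLogL$. Collecting all pieces yields \eqref{erreur:variance}, and \eqref{small:ErrorVar} follows from \eqref{lvslambda} and the definition of $\pto$.

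The main obstacle lies in the error estimate: extracting the additional factor of $\lambda^{-1}$ beyond the naive bound relies on the combination of the local mass-matching \eqref{massisconserved} and the concentration of $\muL$'s bulk in $[-\l,\l]$, which together force $h^{\muL}$ to be nearly constant over each endpoint interval. Once this cancellation is in place, the Plancherel identification of the main term is standard.
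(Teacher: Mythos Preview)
Your decomposition and treatment of the main term $\int\varphi\,\mulambda$ are correct and coincide with the paper's. The gap is in the endpoint error term.

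Your claimed bound $|(h^{\muL})'(y)| \preceq \l/\lambda^2$ for $y\in[\lambda-\L,\lambda]$ is not justified and is in fact wrong. The inverse-power expansion of $\int \muL(x)/(y-x)\,dx$ in $y$ is valid only for the bulk of $\muL$ sitting in $[-\l,\l]$; the tail of $\muL$ near $\lambda$ (where $|\muL|\preceq \l\lambda^{-3/2}(\lambda-x)^{-1/2}$ by \eqref{muL0}) contributes, for $y$ at distance $O(\L)$ from $\lambda$, a term of order $\l\lambda^{-3/2}\L^{-1/2}=\sqrt{\l}/\lambda^{3/2}$, which dominates your $\l/\lambda^2$ since $\lambda\gg\l$. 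Equivalently: on $[\lambda-\L,\lambda]$ one has $\Gam'=\varphi'=0$, so $(h^{\muL})'=\ErrorLog'$, and Proposition~\ref{prop:logpotmuL} gives no control on $(\ErrorLogR)'$ within distance $2\L$ of $\lambda$.

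The fix is immediate and is exactly what the paper does. By symmetry of the kernel, $\int\ErrorLog\,\mulambda=\int(\muL-\mulambda)\,\Gam$, and since $\Gam=\varphi+\const$ on $\Lambda$ with $\varphi\equiv 0$ on $\supp(\muL-\mulambda)$ and $\int(\muL-\mulambda)=0$, this vanishes. Hence $\int\ErrorLog\,\muL=\int\ErrorLog\,(\muL-\mulambda)$, an integral supported only on the two endpoint intervals of length $\L$. Now a direct $L^\infty$--$L^1$ estimate suffices: $|\ErrorLog|\preceq \l^{3/2}\log\lambda/\lambda^{3/2}$ by \eqref{bound:ErrorLog0}, and $\int_{\lambda-\L}^{\lambda}(|\muL|+|\mulambda|)\preceq \l^{3/2}/\lambda^{3/2}$ by \eqref{muL0}, \eqref{mulambda0}, giving $\l^3\log\lambda/\lambda^3$. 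In your own framework this is the observation that the oscillation $h^{\muL}(y)-h^{\muL}(\lambda)$ equals $\ErrorLog(y)-\ErrorLog(\lambda)$ on $[\lambda-\L,\lambda]$ and is therefore bounded by $2|\ErrorLog|_\infty$ directly from \eqref{bound:ErrorLog0}, with no derivative estimate needed.

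A minor point: your remark that $\l^{3/2}\log\lambda/\lambda^{3/2}$ ``does not vanish under \eqref{lvslambda}'' is inaccurate---it is $\pto$ (send $\lambda\to\infty$ first). The only reason a sharper bound is required is to match the explicit rate in \eqref{erreur:variance}.
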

The proof of Lemma \ref{lem:variance} is given in Section \ref{sec:prooflemvariance}.

\section{Transporting to the perturbed measure}
\label{sec:transport}
\subsection{The perturbed measure}
The $\sineb$ process has intensity $1$. Adding a perturbative external field will formally change the average density of points from a constant density to the perturbed density $(1 + \mulambda(x))dx$. Since we work with the approximate perturbation $\muL$, it leads to the following definition.

\begin{defi}[The perturbed measure]
Let $\smax$ be defined as
\begin{equation}
\label{def:smax}
\smax := \hal \max\left(1, |\muL|_{\0}, \|\muL\|_{L^1} \right)^{-1}.
\end{equation}
For any $s$ such that $|s| \leq \smax$, we define the \textit{perturbed measure} $\tmsL$ as
\begin{equation}
\tmsL(x) = 1 + s \muL(x).
\end{equation}
Of course, $\tmsL$ depends on $\lambda, \ophi, \l$ but for simplicity we only keep track of the parameter $s$. In the following, $s$ is always assumed to satisfy $|s| \leq \smax$.
\end{defi}

\begin{lem}[Properties of the perturbed measure]
The density $\tmsL$ is of class $C^2$, is bounded above and below on $[-\lambda, \lambda]$ by universal positive constants, and satisfies
$$
\int_{-\lambda}^{\lambda} \tmsL(x) dx = \int_{-\lambda}^{\lambda} 1 dx.
$$
The density $\tmsL$ is equal to $1$ on $[-\lambda, -\lambda + \L/4]$ and on  $[\lambda - \L/4, \lambda]$.
\end{lem}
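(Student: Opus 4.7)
\medskip

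\noindent\textbf{Proof proposal.} The plan is to verify each of the four claims in turn, each one being essentially a direct consequence of Lemma \ref{lem:approximatemu} and of the choice of normalization in the definition of $\smax$.

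First, regularity: by construction $\tmsL = 1 + s \muL$, and Lemma \ref{lem:approximatemu} states that $\muL$ is of class $C^2$ on $\R$, so $\tmsL$ is of class $C^2$ as well.

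Second, the uniform bounds. The definition \eqref{def:smax} gives $|s| \leq \smax \leq \hal |\muL|_{\0}^{-1}$ (if $|\muL|_{\0} \geq 1$) and otherwise $|s| \leq \hal$ combined with $|\muL|_{\0} \leq 1$. In both cases one obtains
\begin{equation*}
|s \muL(x)| \leq |s|\, |\muL|_{\0} \leq \tfrac{1}{2} \quad \text{for every } x \in [-\lambda, \lambda],
\end{equation*}
so that $\tmsL(x) \in [\tfrac{1}{2}, \tfrac{3}{2}]$ uniformly in $x$, $\l$, $\lambda$ (and $s$).

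Third, the mass equality. Expanding, $\int_{-\lambda}^{\lambda} \tmsL(x) dx = 2\lambda + s \int_{-\lambda}^{\lambda} \muL(x) dx$, so it suffices to show that $\muL$ has total mass zero. By Lemma \ref{lem:propofperturb}, $\mulambda$ has total mass $0$ on $\Lambda$. By Lemma \ref{lem:approximatemu}(i), $\muL$ and $\mulambda$ agree on $[-\lambda+\L, \lambda-\L]$, and by \eqref{massisconserved} they carry the same mass on each of the two boundary intervals $[-\lambda, -\lambda+\L]$ and $[\lambda-\L, \lambda]$. Summing the three contributions yields $\int_{-\lambda}^{\lambda} \muL = \int_{-\lambda}^{\lambda} \mulambda = 0$, as desired.

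Finally, the vanishing of $\tmsL - 1$ near the endpoints is immediate from item (iv) of Lemma \ref{lem:approximatemu}: since $\muL \equiv 0$ on $[-\lambda, -\lambda + \L/4] \cup [\lambda - \L/4, \lambda]$, we have $\tmsL = 1 + s\muL = 1$ on the same intervals. There is no real obstacle here; the only point requiring attention is that \emph{all} terms in the definition \eqref{def:smax} of $\smax$ are used — $|\muL|_{\0}$ for the pointwise bound above, and $\|\muL\|_{L^1}$ will be used elsewhere in the paper (e.g.\ to control perturbations at the level of integrals) — but neither plays a role in the present lemma beyond ensuring $|s\muL| \leq \hal$ pointwise.
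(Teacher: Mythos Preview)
Your proof is correct and follows the same approach as the paper, which simply states that the lemma ``follows directly from the construction of $\muL$ as in Lemma~\ref{lem:approximatemu}, and from the choice of $\smax$ as in \eqref{def:smax}.'' You have spelled out the four verifications in more detail, but the underlying argument is identical.
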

\begin{proof}
This follows directly from the construction of $\muL$ as in Lemma \ref{lem:approximatemu}, and from the choice of $\smax$ as in \eqref{def:smax}.
\end{proof}

\subsection{Energy splitting}
Let $\Lambda$ be the interval $\Lambda = (-\lambda, \lambda)$, and let $\C$ be a point configuration in $\Lambda$. Let $\diamond$ be the diagonal in $\Lambda \times \Lambda$.

\begin{lem}[Energy splitting around $\tmsL$]
\label{lem:energyplusfluct}
The following identity holds:
\begin{multline}
\iint_{\diagc} - \log |x-y| (d\C(x) - dx)(d\C(y) - dy)
\\ = \iint_{\diagc} - \log |x-y| (d\C(x) - d\tmsL(x))(d\C(y) - d\tmsL(y)) \\
+ 2 s \int_{\Lambda} \Gam(x) (d\C - dx) + 2s \int_{\Lambda} \ErrorLog(x) (d\C-dx) \\
- 2 s^2 \ophiH^2 - s^2 \ErrorVar,
\end{multline}
where $\Gam$ is the logarithmic potential generated by $\mulambda$, as in \eqref{def:Gam}, the error term $\ErrorLog$ is defined in \eqref{def:ErrorLog}, and $\ErrorVar$ satisfies \eqref{erreur:variance}. 
\end{lem}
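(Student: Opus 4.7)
The identity is essentially an algebraic expansion, so my plan is to expand $(d\C - d\tmsL)^{\otimes 2}$ in terms of $(d\C - dx)^{\otimes 2}$, collect like terms, and identify each group with the expressions appearing on the right-hand side.

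First I would write $d\tmsL(x) = dx + s\,\muL(x)\,dx$, so that $d\C - d\tmsL = (d\C - dx) - s\,\muL\,dx$. Using bilinearity, the product expands as
\begin{equation*}
(d\C(x) - d\tmsL(x))(d\C(y) - d\tmsL(y)) = (d\C(x) - dx)(d\C(y) - dy) - s\,\muL(y)\,dy\,(d\C(x) - dx) - s\,\muL(x)\,dx\,(d\C(y) - dy) + s^2\,\muL(x)\,\muL(y)\,dx\,dy,
\end{equation*}
so rearranging gives $(d\C - dx)^{\otimes 2}$ as $(d\C - d\tmsL)^{\otimes 2}$ plus the two cross terms and minus the pure $\muL^{\otimes 2}$ term.

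Next I would integrate the identity against $-\log|x-y|$ over $\diagc$. For the cross terms, the measure $\muL(y)\,dy$ is absolutely continuous, so the diagonal has zero measure and Fubini applies: the cross integral reduces to $\int_{\Lambda} \hmu(x)\,(d\C - dx)(x)$, where $\hmu(x) := \int -\log|x-y|\,\muL(y)\,dy$. By symmetry this contributes $2s\,\int \hmu\,(d\C - dx)$. Splitting $\muL = \mulambda + (\muL - \mulambda)$ and recalling Definitions \eqref{def:Gam} and \eqref{def:ErrorLog}, we get $\hmu = \Gam + \ErrorLog$, which gives the two linear statistics on the right-hand side. For the pure $\muL^{\otimes 2}$ term, again the diagonal is negligible, and Lemma \ref{lem:variance} identifies
\begin{equation*}
\iint -\log|x-y|\,\muL(x)\,\muL(y)\,dx\,dy = 2\,\ophiH^2 + \ErrorVar,
\end{equation*}
producing the $-2s^2 \ophiH^2 - s^2 \ErrorVar$ contribution after multiplication by $-s^2$.

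There is no real obstacle here; the only thing worth being careful about is checking that removing the diagonal is harmless for all terms involving at least one continuous factor (which is immediate since $\muL \in L^1 \cap L^\infty$ and $-\log|x-y|$ is locally integrable on $\R^2$), so that the same excluded diagonal set works for both sides of the identity. All remaining manipulations are direct applications of Fubini's theorem and the definitions of $\Gam$, $\ErrorLog$, and $\ErrorVar$ given earlier in this section.
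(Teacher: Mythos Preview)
Your proof is correct and follows essentially the same route as the paper's: expand $(d\C - d\tmsL)^{\otimes 2}$ bilinearly using $d\tmsL = dx + s\,\muL\,dx$, identify the cross terms with the logarithmic potential of $\muL$ split as $\Gam + \ErrorLog$, and invoke Lemma~\ref{lem:variance} for the pure $\muL\otimes\muL$ term. Your remark about the diagonal being negligible for the terms with an absolutely continuous factor is a small extra care that the paper leaves implicit, but the argument is otherwise identical.
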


We postpone the proof of Lemma \ref{lem:energyplusfluct} to Section \ref{sec:proofenergyplusfluct}, it simply consists in putting together the various definitions given above.

\subsection{The transport map} 
\begin{defi}[Transport map]
\label{defi:TransportMap}
We let $\Ms$ be the cumulative distribution function of the density $\tmsL$ 
\begin{equation*}
\Ms := x \mapsto \int_{-\lambda}^x \tmsL(y) dy,
\end{equation*}
and we define the transport map $\Phis$ on $[-\lambda, \lambda]$ as
\begin{equation}
\label{def:Phis}
\Phis(x) := \Ms^{-1}(x + \lambda),
\end{equation}
so that $\Phis$ satisfies, for $x$ in $[-\lambda, \lambda]$, the identity
$$
\int_{-\lambda}^{\Phis(x)} \tmsL(y) dy = \int_{-\lambda}^x 1 dy.
$$
\end{defi}

\begin{lem}[Properties of the transport map]
\label{lem:transportmap}
The map $\Phis$ is a $C^1$, increasing bijection from $[-\lambda, \lambda]$ to itself. The push-forward of the constant density $1$ on $[-\lambda, \lambda]$ by $\Phis$ is equal to $\tmsL$, i.e. for any measurable function $f$ we have
$$
\int_{-\lambda}^{\lambda} f \circ \Phis(x) dx = \int_{-\lambda}^\lambda f(x) \tmsL(x) dx.
$$

Let $\id$ be the identity map from $[-\lambda, \lambda]$ to itself. The transport map $\Phis$ coincides with $\id$ near the endpoints, more precisely on $[-\lambda, - \lambda + \L/4]$ and $[\lambda - \L/4, \lambda]$. We define $\psis$ as 
\begin{equation}
\label{def:psis}
\psis := \Phis - \id.
\end{equation}
The map $\psis$ satisfies
\begin{equation}
\label{PhisA0}
\psis(x) =  - s \int_{-\lambda}^{\Phis(x)} \muL(y) dy,
\end{equation}
in particular, we have the rough control
\begin{equation}
\label{bound:psis0} |\psis|_{\0} \leq 1, \text{ i.e. }|\Phis - \id|_{\0} \leq 1,
\end{equation}
and we also obtain
\begin{align}
\label{bound:psis1} \psis^{(1)}(x) & \preceq s |\muL|_{\0, V(x)}, \\
\label{bound:psis2} \psis^{(2)}(x) & \preceq s |\muL|_{\1, V(x)}, \\
\label{bound:psi3} \psis^{(3)}(x) & \preceq s|\muL|_{\2, V(x)}.
\end{align}
\end{lem}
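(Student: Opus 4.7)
The plan is to read off everything from the definition $\Phi_s = M_s^{-1}(\,\cdot\, + \lambda)$ by differentiation, using that $\tilde m_s = 1 + s\mu_L$ is bounded above and below by positive constants and that $s_{\max}$ is tuned to make all relevant quantities small.

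\emph{Regularity and bijectivity.} Since $\tilde m_s$ is $C^2$ and strictly positive on $[-\lambda,\lambda]$, its primitive $M_s$ is a $C^3$, strictly increasing map from $[-\lambda,\lambda]$ onto $[0, M_s(\lambda)]$. Because $\muL$ has total mass equal to that of $\mulambda$, which is $0$ (Lemma \ref{lem:propofperturb}), we have $M_s(\lambda) = 2\lambda$, hence $M_s^{-1} : [0,2\lambda] \to [-\lambda,\lambda]$ is a $C^3$ increasing bijection and so is $\Phis$. The push-forward identity is the standard change-of-variables statement for the inverse c.d.f.

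\emph{Behavior near the endpoints.} On $[-\lambda,-\lambda + \L/4]$ the density $\muL$ vanishes (Lemma \ref{lem:approximatemu}), so $\tilde m_s \equiv 1$ and $M_s(x) = x+\lambda$ there; inverting, $\Phis(x) = x$ on $[-\lambda, -\lambda + \L/4]$. The same reasoning applies on $[\lambda - \L/4, \lambda]$.

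\emph{The formula for $\psis$.} Plugging $\tilde m_s = 1 + s\muL$ into the defining relation gives
\begin{equation*}
x + \lambda \;=\; \int_{-\lambda}^{\Phis(x)} \tilde m_s(y)\, dy \;=\; \Phis(x) + \lambda + s\int_{-\lambda}^{\Phis(x)} \muL(y)\, dy,
\end{equation*}
so $\psis(x) = \Phis(x) - x = -s\int_{-\lambda}^{\Phis(x)} \muL(y)\, dy$. The sup bound $|\psis|_{\0} \leq |s|\,\|\muL\|_{L^1} \leq s_{\max}\|\muL\|_{L^1} \leq \tfrac{1}{2} \leq 1$ then follows from the definition \eqref{def:smax} of $\smax$.

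\emph{Derivative bounds.} Differentiating $M_s(\Phis(x)) = x+\lambda$ once yields
\begin{equation*}
\Phis'(x) \;=\; \frac{1}{\tilde m_s(\Phis(x))} \;=\; \frac{1}{1 + s\,\muL(\Phis(x))},
\qquad
\psis'(x) \;=\; -\,\frac{s\,\muL(\Phis(x))}{1 + s\,\muL(\Phis(x))}.
\end{equation*}
By the choice of $\smax$, $|s\muL| \leq 1/2$, so the denominator is bounded away from $0$, and $\psis'(x) \preceq s |\muL(\Phis(x))|$. The already established sup bound $|\psis|_{\0} \leq 1$ implies $\Phis(x) \in \Vx = [x-3,x+3]$, hence $|\muL(\Phis(x))| \leq |\muL|_{\0, \Vx}$, which gives \eqref{bound:psis1}. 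Differentiating once more, using $\psis'' = \Phis''$ and the chain rule, produces terms of the form
\begin{equation*}
\frac{-s\,\muL'(\Phis)\,\Phis'}{(1+s\muL(\Phis))^{2}},
\end{equation*}
each bounded by $s|\muL|_{\1,\Vx}$, yielding \eqref{bound:psis2}. A third differentiation yields linear combinations of terms containing $\muL''(\Phis)$, together with products of lower derivatives such as $(\muL'(\Phis))^2$ and $\muL(\Phis)\muL''(\Phis)$; using $|s\muL|_{\k}\preceq 1$ to absorb extra factors of $s$ and the quantitative bounds of Lemma \ref{lem:boundsonmuL} (which imply e.g.\ $|\muL|_{\1}^2 \preceq |\muL|_{\2}$ on the relevant range) gives \eqref{bound:psi3}. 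All evaluations of $\muL^{(\k)}$ happen at $\Phis(x) \in \Vx$, which is why the local seminorms $|\muL|_{\k, \Vx}$ arise.

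The main obstacle (if any) is purely bookkeeping: keeping track of the various terms produced by the chain rule at third order and verifying that the cross terms are indeed dominated by $s|\muL|_{\2,\Vx}$ under the bounds of Lemma \ref{lem:boundsonmuL}. Nothing deep is involved beyond strict positivity of $\tilde m_s$ and the smallness built into $\smax$.
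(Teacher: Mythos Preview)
Your proof is correct and follows essentially the same route as the paper: both arguments derive everything from $M_s(\Phi_s(x)) = x+\lambda$ by successive differentiation, use the positivity of $\tmsL$ and the choice of $\smax$ to bound denominators, and invoke the pointwise bounds of Lemma~\ref{lem:boundsonmuL} to check that the quadratic cross term $s^2|\muL|_{\1}^2$ is dominated by $s|\muL|_{\2}$ in the third-derivative estimate. The only cosmetic difference is that you note $\Phi_s$ is actually $C^3$ (since $\muL$ is $C^2$), whereas the paper states $C^1$; your observation is correct and harmless.
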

The proof of Lemma \ref{lem:transportmap} is given in Section \ref{sec:prooftransportmap}.

\begin{lem}[Finer bound on $\psis$]
\label{lem:additionalpropertiespsis}
We have
\begin{equation}
\label{psisplusprecis}
\psis(x) \leq s \int_{-\lambda}^{x + 1} |\muL(y)| dy, \quad \psis(x) \leq s \int_{x-1}^{\lambda} |\muL(y)| dy.
\end{equation}

We obtain the following bounds, which improve on \eqref{bound:psis0}:
\begin{equation}
\label{psisplusprecis2}
\psis(x) \preceq s \times 
\begin{cases}
1 & |x| \leq 10 \l\\
\frac{\l}{|x|} & 10 \l \leq |x| \leq \lambda /2, \\
\frac{\l}{\lambda^{3/2}} \sqrt{\lambda  - |x|} & |x| \geq \lambda/2.
\end{cases}
\end{equation}
\end{lem}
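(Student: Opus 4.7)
The plan is to derive \eqref{psisplusprecis} from the identity \eqref{PhisA0} together with the fact that $\muL$ has zero total mass, and then to derive \eqref{psisplusprecis2} by inserting the pointwise bounds of Lemma \ref{lem:boundsonmuL} into \eqref{psisplusprecis} and computing elementary integrals on three subdomains.

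First, I would observe that the construction of $\muL$ in Lemma \ref{lem:approximatemu} forces $\int_{-\lambda}^{\lambda}\muL = 0$: on $[-\lambda+\L,\lambda-\L]$ one has $\muL = \mulambda$ by point (1), and on the two end intervals the masses of $\muL$ and $\mulambda$ agree by \eqref{massisconserved}; the conclusion then follows from the fact that $\mulambda$ has total mass $0$ (Lemma \ref{lem:propofperturb}). Combined with \eqref{PhisA0}, this gives the two equivalent expressions
\[
\psis(x) = -s\int_{-\lambda}^{\Phis(x)} \muL(y)\,dy = s\int_{\Phis(x)}^{\lambda} \muL(y)\,dy.
\]
Since the rough bound \eqref{bound:psis0} implies $\Phis(x)=x+\psis(x)\in[x-1,x+1]$, bounding the integrands by their absolute values yields \eqref{psisplusprecis}.

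For \eqref{psisplusprecis2}, by symmetry it suffices to treat $x\geq 0$, and I would use the right-hand inequality of \eqref{psisplusprecis}, i.e.\ $|\psis(x)|\leq s\int_{x-1}^{\lambda}|\muL|$. In the regime $|x|\leq 10\l$, I use the bound $\|\muL\|_{L^1}\preceq 1$ from \eqref{muLL1} directly. For $10\l\leq x\leq \lambda/2$, I split the integral at $\lambda/2$ and at $\lambda-\L$, and apply the three relevant pieces of \eqref{muL0}: the first contribution $\int_{x-1}^{\lambda/2}\frac{\l}{y^2}\,dy$ gives the dominant term $\preceq \l/x$, while the remaining two pieces are $\preceq \l/\lambda$ and absorbed. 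For $\lambda/2\leq x\leq \lambda$, I split at $\lambda-\L$: the middle piece of \eqref{muL0} gives $\int_{x-1}^{\lambda-\L}\frac{\l}{\lambda^{3/2}\sqrt{\lambda-y}}\,dy \preceq \frac{\l}{\lambda^{3/2}}\sqrt{\lambda-x}$ after an explicit antiderivative, and the boundary piece on $[\lambda-\L,\lambda-\L/4]$ (recall $\muL$ vanishes on $[\lambda-\L/4,\lambda]$ by point (4) of Lemma \ref{lem:approximatemu}) contributes $\preceq \l\L^{1/2}/\lambda^{3/2}$, which is also dominated by $\frac{\l}{\lambda^{3/2}}\sqrt{\lambda-x}$ in this range since $\lambda-x\gtrsim \L$ or the integration range degenerates.

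The argument is almost entirely computational; the only conceptual point is the zero-mass property of $\muL$, which is what lets one use whichever of the two bounds in \eqref{psisplusprecis} is more convenient for a given $x$. The main nuisance rather than obstacle is handling the transition region $\lambda-\L/4\leq x\leq \lambda-\L/4+1$, where one must verify that the crude pointwise bound $|\muL|\preceq \l/(\lambda^{3/2}\L^{1/2})$ (valid on $[\lambda-\L,\lambda-\L/4]$) over an interval of length at most $1$ is consistent with the target $\frac{\l}{\lambda^{3/2}}\sqrt{\lambda-x}$; this is immediate because in that region $\sqrt{\lambda-x}\asymp \L^{1/2}\gg \L^{-1/2}$.
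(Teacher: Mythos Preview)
Your proof is correct. For the cases $|x|\leq 10\l$ and $|x|\geq \lambda/2$ it coincides with the paper's argument. For the middle range $10\l\leq |x|\leq \lambda/2$, however, you take a genuinely simpler route than the paper. By selecting the right-hand bound $|\psis(x)|\leq s\int_{x-1}^{\lambda}|\muL|$ for positive $x$, your integration domain lies entirely in $\{|y|\geq 2\l\}$, where the pointwise estimate $|\muL(y)|\preceq \l/y^2$ from \eqref{muL0} integrates directly to $\l/|x|$. The paper instead starts from the left-hand representation $\psis(x)=-s\int_{-\lambda}^{\Phis(x)}\muL$, splits off a symmetric piece $\int_{-\Phis(x)}^{\Phis(x)}\muL$ (which crosses the region $|y|\leq 2\l$ and would be $O(1)$ in absolute value), and controls it via the cancellation estimate \eqref{integraledeH} of Lemma~\ref{lem:integraledeH}, itself relying on the decomposition $\phiL=\lambda\varphi'+\Er$ of Lemma~\ref{lem:phiLEr}. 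Your argument is more elementary and avoids Lemma~\ref{lem:integraledeH} altogether; the paper's route makes the underlying cancellation explicit but is not needed for this lemma.

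One minor slip: in your final paragraph the transition region to check is $x$ just \emph{below} $\lambda-\L/4$, not above, since $\psis\equiv 0$ on $[\lambda-\L/4,\lambda]$ by Lemma~\ref{lem:transportmap}. Your verification goes through once this is adjusted.
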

The proof of Lemma \ref{lem:additionalpropertiespsis} is given in Section \ref{sec:proofadditionalpropertiespsis}

\begin{defi}[The slope of the transport]
For $x,y$ in $\Lambda$ we define $\tri(x,y)$ as
\begin{equation}
\label{def:tri}
\tri(x,y) := \frac{\psis(y) - \psis(x)}{y-x},
\end{equation}
with the natural convention that $\tri(x,x) = \psis'(x)$.
\end{defi}

\subsection{Energy expansion along a transport}
We introduce the following notation.
\begin{align}
\label{def:MainComp} \Main(\eta) & := \iint_{\Lambda \times \Lambda} - \log | 1 + \tri(x,y) | (d\eta - dx) (d\eta -dy) \\
\label{def:RE} \RE & := - \int \log \tmsL(x) \tmsL(x) dx \\
\label{def:FluRE} \FluRE(\eta) & := -  \int \log \tmsL \circ \Phis(x) (d\eta - dx).
\end{align}
The term $\Main(\eta)$ will be the main term in the energy comparison below. The term $\RE$ is the relative entropy of $\tmsL$, which is independent on the point configuration, and $\FluRE(\eta)$ is the fluctuation of the relative entropy functional, which depends on $\eta$.

\begin{lem}[Energy expansion along a transport] 
\label{lem:energycomparison}
Let $\eta$ be a point configuration in $\Lambda$, let $\eta_s$ be the push-forward of the configuration $\eta$ by the map $\Phis$.

We have
\begin{multline}
\label{energycomparison}
\iint_{\diagc} - \log |x-y| (d\etas(x) - \tmsL(x) dx)(d\etas(y) - \tmsL(y)dy)  \\ 
= \iint_{\diagc} - \log |x-y| (d\eta(x) - dx)(d\eta(y) - dy)  + \Main(\eta)  + \RE + \FluRE(\eta).
\end{multline}
\end{lem}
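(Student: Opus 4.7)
The plan is a direct change-of-variables calculation, where the only subtle point is the diagonal contribution that appears when extending the integral defining $\Main(\eta)$ from $\diagc$ to the full product $\La \times \La$. Everything else reduces to bookkeeping of Jacobians.

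First I change variables $x = \Phis(u)$, $y = \Phis(v)$ in the left-hand side. Because $\Phis$ is a $C^1$ bijection of $\La$ to itself by Lemma \ref{lem:transportmap}, the set $\diagc$ is preserved under the substitution, and the push-forward relation $\int f \circ \Phis \, du = \int f\, \tmsL\, dx$ says that $d\etas(x) - \tmsL(x)dx$ pulls back to $d\eta(u) - du$. Hence
\begin{equation*}
\iint_{\diagc} - \log |x-y| (d\etas(x) - \tmsL(x) dx)(d\etas(y) - \tmsL(y)dy) = \iint_{\diagc} - \log|\Phis(u) - \Phis(v)| \, (d\eta(u) - du)(d\eta(v) - dv).
\end{equation*}
I then factor $\Phis(u) - \Phis(v) = (u-v)(1 + \tri(u,v))$ using \eqref{def:tri} and split $-\log|\Phis(u)-\Phis(v)| = -\log|u-v| - \log|1 + \tri(u,v)|$. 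The $-\log|u-v|$ piece gives exactly the first term on the right-hand side of \eqref{energycomparison}, so it remains to prove
\begin{equation*}
\iint_{\diagc} -\log|1+\tri(u,v)|\,(d\eta - du)(d\eta - dv) = \Main(\eta) + \RE + \FluRE(\eta).
\end{equation*}

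For this I compare the left-hand side above with $\Main(\eta)$, which is defined on all of $\La \times \La$ rather than on $\diagc$. Expanding $(d\eta - dx)(d\eta - dy)$, only the $d\eta \otimes d\eta$ piece has mass on the diagonal, and since $1 + \tri(x,x) = \Phis'(x) > 0$ the diagonal contribution equals $\sum_{x \in \eta} -\log \Phis'(x) = -\int \log \Phis'\, d\eta$. Therefore
\begin{equation*}
\Main(\eta) = \iint_{\diagc} -\log|1+\tri|\,(d\eta - du)(d\eta - dv) - \int \log \Phis'(x)\, d\eta(x),
\end{equation*}
so the target identity reduces to showing $\int \log \Phis'\, d\eta = \RE + \FluRE(\eta)$.

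To verify this last identity I use the pointwise Jacobian relation $\tmsL \circ \Phis \cdot \Phis' = 1$ (which follows from Definition \ref{defi:TransportMap}), so that $\log \Phis' = - \log (\tmsL \circ \Phis)$. Changing variables $x = \Phis(u)$ in \eqref{def:RE} and cancelling $\Phis'$ against $\tmsL \circ \Phis$ gives $\RE = -\int \log(\tmsL \circ \Phis)(u)\, du$, while \eqref{def:FluRE} reads $\FluRE(\eta) = -\int \log(\tmsL \circ \Phis)(d\eta - dx)$. Summing, the two Lebesgue integrals against $\log(\tmsL\circ\Phis)$ cancel, leaving $\RE + \FluRE(\eta) = -\int \log(\tmsL \circ \Phis)\, d\eta = \int \log \Phis'\, d\eta$, which closes the argument. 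The only place requiring care is isolating the diagonal mass of $d\eta \otimes d\eta$ when passing between integration on $\diagc$ and on $\La \times \La$; the rest is symbolic manipulation.
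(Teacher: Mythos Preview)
Your proof is correct and follows essentially the same route as the paper's own argument: change variables via $\Phis$, split $-\log|\Phis(u)-\Phis(v)|$ as $-\log|u-v|-\log|1+\tri|$, isolate the diagonal contribution $\int\log\Phis'\,d\eta$ when passing from $\diagc$ to $\La\times\La$, and finally identify that diagonal term with $\RE+\FluRE(\eta)$ using $\log\Phis'=-\log(\tmsL\circ\Phis)$. The paper records the same Jacobian identity as \eqref{intphisprime} and \eqref{PhisprimeC}; your presentation is slightly more streamlined but substantively identical.
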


\begin{proof}[Proof of Lemma \ref{lem:energycomparison}]
Since, by construction, $\Phis$ transports $\eta$ onto $\etas$ and the constant density $dx$ onto $\tmsL(x) dx$, we may write
\begin{multline*}
\iint_{\diagc} - \log|x-y| (d\eta_s(x) - \tmsL(x) dx)(d\eta_s(y) - \tmsL(y)dy) \\
= \iint_{\diagc} - \log |\Phis(x) - \Phis(y)| (d\eta - dx) (d\eta - dy)  \\
= \iint_{\diagc} - \log|x-y| (d\eta - dx) (d\eta - dy) \\
+ \iint_{\diagc} - \log | 1 + \tri(x,y) | (d\eta - dx) (d\eta -dy),
\end{multline*}
where we have used the definition $\psis = \Phis - \id$ and the definition of $\tri$ as in \eqref{def:tri}. 

Since $\tri$ is continuously extended by $\psis'$ on the diagonal, we may write
\begin{multline*}
\iint_{\diagc} - \log | 1 + \tri(x,y) | (d\eta - dx) (d\eta -dy) \\ 
= \iint_{\Lambda \times \Lambda} - \log | 1 + \tri(x,y) | (d\eta - dx) (d\eta -dy) 
+ \int \log | 1 + \psis'(x) | d\eta.
\end{multline*}
The first term in the right-hand side corresponds to the definition \eqref{def:MainComp} of $\Main$. We claim that
\begin{equation}
\label{intphisprime}
\int \log (1 + \psis'(x)) (d\eta - dx) = \int \log \Phis'(x) (d\eta - dx) = \RE + \FluRE(\eta).
\end{equation} 
To prove \eqref{intphisprime}, let us observe that $1 + \psis'(x) = \Phis'(x)$ and, by definition of a transport, we have
\begin{equation*}
1 + \psis'(x) = \Phis'(x) = \frac{1}{\tmsL \circ \Phis(x)}
\end{equation*}
We obtain
\begin{equation*}
\int \log  |1 + \psis'(x) | d\eta = \int \log \Phis'(x) = - \int \log \tmsL \circ \Phis(x) d\eta. 
\end{equation*}
Finally, let us write
$$
-\int \log \tmsL \circ \Phis(x) d\eta = - \int \log \tmsL \circ \Phis(x) dx -  \int \log \tmsL \circ \Phis(x) (d\eta - dx).
$$
The first term in the right-hand side can be seen, using the fact that $\Phis$ transports the Lebesgue density onto $\tmsL$, as
$$
- \int \log \tmsL \circ \Phis(x) dx = - \int \log \tmsL(x) \tmsL(x) dx,
$$
so we obtain
\begin{multline}
\label{PhisprimeC}
\int \log  |1 + \psis'(x) | d\eta(x) = \int \log \Phis'(x) d\eta(x) \\
= - \int \log \tmsL(x) \tmsL(x) dx -  \int \log \tmsL \circ \Phis(x) (d\eta - dx).
\end{multline}
Using the notation introduced above in \eqref{def:MainComp}, \eqref{def:RE}, \eqref{def:FluRE}, this concludes the proof of \eqref{energycomparison}.
\end{proof}

\section{Comparison of energies I: the interior-interior interaction}
\def \Lala{\Lambda \times \Lambda}
\label{sec:compareener}

\subsection{The main term in the comparison}
We have
\begin{prop}[The main term is often small]
\label{prop:mainterms}
$$
\Esp[ \left|\Main\right| ] = s \pto.
$$
\end{prop}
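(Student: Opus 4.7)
The plan is to exploit the uniform smallness of $\tri(x,y)$ in order to Taylor-expand $-\log(1+\tri)$, and then to bound the linear and quadratic contributions via the a priori fluctuation bound of Proposition \ref{prop:aprioribounds} combined with the refined discrepancy estimate \eqref{discrestimateB}. Since $1+\psis^{(1)}=\Phis^{(1)}=(\tmsL\circ\Phis)^{-1}$ and $\tmsL$ is bounded away from $0$ and $\infty$ by the choice of $\smax$ (Definition \ref{def:smax}), we have $|\psis^{(1)}(x)| \preceq s\,|\muL|_{\0,\Vx}$ and, by the mean value theorem, $|\tri(x,y)| \preceq s\,|\muL|_{\0,W(x,y)}$, where $W(x,y)$ denotes the segment joining $x$ and $y$. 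In particular $|\tri|\leq 1/2$ uniformly on $\Lala$ (shrinking $\smax$ by a universal factor if needed), so one may write $-\log|1+\tri(x,y)| = -\tri(x,y) + R(x,y)$ with $|R(x,y)|\preceq \tri(x,y)^2$, and split $\Main(\eta) = I_1(\eta) + I_2(\eta)$ into a linear and a quadratic piece accordingly.

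For the linear piece, use $\tri(x,y) = \int_0^1 \psis^{(1)}((1-t)x+ty)\,dt$ and Fubini to write
$$
I_1(\eta) = -\int_0^1 \iint_{\Lala} \psis^{(1)}((1-t)x+ty)\,(d\eta-dx)(d\eta-dy)\,dt.
$$
For each $t$, apply Proposition \ref{prop:aprioribounds} successively in $y$ and then in $x$: the inner function $y\mapsto \psis^{(1)}((1-t)x+ty)$ has $C^1$-seminorm on $V_j$ bounded by $t\,|\psis^{(2)}|_{\0,V_j^{\mathrm{shift}}}$, which via \eqref{bound:psis2} and Lemma \ref{lem:boundsonmuL} is of order $s/\l^2$ in the bulk $|z|\leq 2\l$ and decays in a controlled way across the three regimes of Lemma \ref{lem:boundsonmuL}. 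Taking expectations and using the Cauchy--Schwarz estimate $\Esp[(\sum_i \phi_i \Dt_i)^2] \leq (\sum_i \phi_i (\Esp[\Dt_i^2])^{1/2})^2$ together with the refined bound $\Esp[\Dt_i^2] = o(|i|)$, one obtains $\Esp[|I_1|] = s\cdot\pto$.

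For the quadratic piece, since $R$ is a deterministic function of $(x,y)$ with $|R|\preceq\tri^2$,
$$
\Esp[|I_2|] \preceq \iint_{\Lala} \tri(x,y)^2\, \Esp\bigl[|d\eta(x)-dx|\,|d\eta(y)-dy|\bigr];
$$
bounding $|d\eta-dx|\leq d\eta+dx$ as measures and using the intensity-$1$ and bounded two-point correlation of $\sineb$, the right-hand side is controlled by a constant multiple of $\iint \tri^2\,dx\,dy + \int\tri(x,x)^2\,dx$. Using the Cauchy--Schwarz inequality $\tri(x,y)^2\leq \int_0^1(\psis^{(1)})^2((1-t)x+ty)\,dt$ and a linear change of variable, both integrals reduce to $O(s^2\,\|\muL\|_{L^2}^2)$, and Lemma \ref{lem:boundsonmuL} gives $\|\muL\|_{L^2}^2 = \pto$, so $\Esp[|I_2|] = s\cdot\pto$. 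The main obstacle lies in the linear piece: one must carefully match the decay of $|\muL^{(\1)}|$ (which is $O(1/\l^2)$ in the bulk but degenerates near $\pm\lambda$) against the growth $\sqrt{\Esp[\Dt_i^2]}$ summed over the three regimes of Lemma \ref{lem:boundsonmuL}, and the crucial $\pto$ factor only emerges because $\sineb$ enjoys the \emph{refined} discrepancy bound \eqref{discrestimateB}, not merely the crude $O(|i|)$ of \eqref{discrestimateA}.
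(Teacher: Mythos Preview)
Your proposal has a reasonable opening idea (Taylor-expand $-\log(1+\tri)$), but both the linear and the quadratic parts contain genuine gaps, and the key structural ingredient used in the paper is absent.

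\medskip
\textbf{Quadratic piece $I_2$.} You claim that after bounding $|d\eta-dx|\leq d\eta+dx$ one gets
$\iint_{\Lambda\times\Lambda}\tri^2\,dxdy=O(s^2\|\muL\|_{L^2}^2)$. This is false: the change of variable $u=(1-t)x+ty$ only kills one integration, and the remaining $\int_\Lambda dy$ contributes a factor $\lambda$, so the bound is at best $s^2\lambda\|\muL\|_{L^2}^2\preceq s^2\lambda/\l$, which diverges as $\lambda\to\infty$. (One can also see this directly: for $10\l<x<y<\lambda/2$ one has $|\tri|\preceq s\l/x^2$, and $\int_{10\l}^{\lambda/2}(\lambda-x)\,s^2\l^2x^{-4}\,dx\asymp s^2\lambda/\l$.) Replacing $|d\eta-dx|$ by $d\eta+dx$ destroys the very cancellation the proposition is about; also, boundedness of the two-point function of $\sineb$ for general $\beta$ is not something you can simply quote.

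\medskip
\textbf{Linear piece $I_1$.} Writing $\tri(x,y)=\int_0^1\psis'((1-t)x+ty)\,dt$ and ``applying Proposition~\ref{prop:aprioribounds} in $y$ then in $x$'' cannot yield the needed decay. A correct double application of Proposition~\ref{prop:aprioribounds} to a kernel $h(x,y)$ produces a bound governed by $|\partial^2_{xy}h|_{V(i,j)}$; for $h_t(x,y)=\psis'((1-t)x+ty)$ this is $t(1-t)\,\psis'''(\cdot)$, and after integrating in $t$ one only recovers a factor $|j-i|^{-1}$. That is not summable against $\sqrt{|i|}\sqrt{|j|}$ over $\Lambda\times\Lambda$: already the region $|i|\leq 2\l$, $4\l\leq j\leq\lambda$ gives a contribution $\asymp s\sqrt{\lambda/\l}$. (Your text even refers to $\psis^{(2)}$ rather than $\psis^{(3)}$, which suggests the iteration is not being carried out correctly.)

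\medskip
What the paper actually uses is precisely the structure you discard by passing to the $t$-representation: the mixed derivative $\partial^2_{xy}\tri$ (and hence $\partial^2_{xy}\Ff$) carries an explicit $|x-y|^{-2}$ decay,
\[
\partial^2_{xy}\Ff(x,y)\ \preceq\ \frac{|\psis'(x)|+|\psis'(y)|}{(x-y)^2}+\frac{|\psis(x)|+|\psis(y)|}{|x-y|^3},
\]
together with a complementary ``short-range'' bound $\partial^2_{xy}\Ff\preceq\sup_{[x,y]}|\psis^{(3)}|$ for $x$ close to $y$. The proof then performs a genuine double application of Proposition~\ref{prop:aprioribounds} (with cut-offs to handle the boundary of $\Lambda$), obtains a double sum $\sum_{i,j}\Dt_i\Dt_j\,|\partial^2_{xy}\Ff|_{V(i,j)}$ plus explicit boundary terms, and then does a region-by-region estimate matching the far/near bounds on $\Ff$ with the growth of $\Esp[\Dt_i\Dt_j]$; the refined discrepancy estimate \eqref{discrestimateB} is needed exactly where the crude bound gives $O(1)$. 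None of this survives the Cauchy--Schwarz/Jensen reductions in your sketch.
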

The proof of Proposition \ref{prop:mainterms} is rather elementary, but involves cumbersome computations. We postpone it to Section \ref{sec:proofpropmainterms}.

\subsection{The relative entropy term}
\begin{lem}[The term $\RE$ is small]
\label{lem:relativeentropy}
We have
\begin{equation}
\label{relativeentropy}
\RE = - \int \log \tmsL(x) \tmsL(x) dx  \preceq \frac{s^2}{\l}.
\end{equation}
In particular, we obtain
\begin{equation}
\label{smallRE} \RE = s^2 \pto.
\end{equation}
\end{lem}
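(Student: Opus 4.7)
The plan is to Taylor-expand the integrand $-(1+s\muL(x))\log(1+s\muL(x))$ around $s\muL(x) = 0$. By the choice of $\smax$ in \eqref{def:smax}, we have $|s\muL(x)| \leq 1/2$ uniformly, so a standard estimate gives
$$
(1+u)\log(1+u) = u + \frac{u^2}{2} + r(u), \qquad |r(u)| \preceq |u|^3 \text{ for } |u| \leq \tfrac12.
$$
Setting $u = s\muL(x)$ and integrating against Lebesgue measure on $\Lambda$ yields
$$
-\RE = s \int \muL(x)\,dx + \frac{s^2}{2} \int \muL(x)^2\, dx + O\left( s^3 \int |\muL(x)|^3\,dx \right).
$$

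The linear term vanishes: $\mulambda$ has total mass zero on $\Lambda$ by Lemma \ref{lem:propofperturb}, and the matching conditions \eqref{massisconserved} in Lemma \ref{lem:approximatemu} ensure that $\int \muL\,dx = \int \mulambda\,dx = 0$. The cubic remainder is bounded crudely by $|s|^3 \|\muL\|_{\infty} \int \muL^2\,dx$, which from \eqref{muL0} is $\preceq s^3/\l$ times $\int \muL^2\,dx$, so it is absorbed into the leading quadratic contribution.

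All that remains is the $L^2$ bound $\int \muL^2\,dx \preceq 1/\l$. This is the only step involving actual computation, and it is carried out by splitting $\Lambda$ into the four regimes $|x| \leq 2\l$, $2\l \leq |x| \leq \lambda/2$, $\lambda/2 \leq |x| \leq \lambda - \L$, and $\lambda - \L \leq |x| \leq \lambda$ used in the statement of Lemma \ref{lem:boundsonmuL}, and applying the pointwise bound \eqref{muL0} on each. A quick check shows that the bulk region $|x| \leq 2\l$ gives the dominant contribution of order $\l \cdot (1/\l)^2 = 1/\l$, while each of the other three regimes contributes at most $O(\l^2 \log\lambda / \lambda^3)$, which is much smaller under \eqref{lvslambda}. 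Combining the three bullets yields $|\RE| \preceq s^2/\l$, and the statement $\RE = s^2 \pto$ follows since $1/\l \to 0$ as $\l \to \infty$ for fixed $s$.

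There is no serious obstacle in the argument: the only real content is the $L^2$ estimate on $\muL$, and this is elementary given the regime-by-regime pointwise bounds already established. The mass-cancellation that kills the would-be $O(s)$ term — which would be fatal — is built into the construction of $\muL$ via \eqref{massisconserved}, so one only needs to invoke this carefully and not rely on any finer cancellation.
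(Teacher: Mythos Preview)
Your proof is correct and follows essentially the same approach as the paper's: Taylor-expand, use $\int \muL = 0$ to kill the linear term, and bound the remainder by $s^2\|\muL\|_{L^2}^2 \preceq s^2/\l$ via \eqref{muL0}. One minor slip: the regime $2\l \leq |x| \leq \lambda/2$ also contributes $O(1/\l)$ (the integral $\int_{2\l}^{\lambda/2} \l^2 x^{-4}\,dx$ is dominated by its lower endpoint), not $O(\l^2\log\lambda/\lambda^3)$ as you claim, but this does not affect the final bound.
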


\begin{proof}
We write $\tmsL = 1 + s \muL$, expand the $\log$ and use the fact that $\muL$ has total mass $0$. We obtain
$$
\int \log \tmsL(x) \tmsL(x) dx = \int \left(- s\muL + \Oun\left( s^2 \muL^2 \right)\right) (1 + s \muL) \preceq s^2 \|\muL^2\|_{L^1}.
$$
Using \eqref{muL0}, we see that $\|\muL^2\|_{L^1} \preceq \frac{1}{\l}$, which yields \eqref{relativeentropy}.
\end{proof}

\subsection{The fluctuations of the relative entropy term}
\begin{lem}[The fluctuations $\FluRE(\eta)$]
\label{lem:flucttmsLPhis}
We have
\begin{equation} \label{flucttmsLPhis}
\FluRE(\eta) = - \int \log \tmsL \circ \Phis(x) (d\eta - dx) \preceq s \sum_{i = -\lambda}^{\lambda} |\muL|_{\1, \V(x)} \tD_i.
\end{equation}
\end{lem}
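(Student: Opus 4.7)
The plan is to apply the a priori bound of Proposition~\ref{prop:aprioribounds} directly to the test function $g_s(x) := -\log \tmsL \circ \Phis(x)$. First I would check that $g_s$ is of class $C^1$ and compactly supported in $\Lambda$: by Lemma~\ref{lem:approximatemu}, $\muL$ is identically zero on $[-\lambda, -\lambda+\L/4] \cup [\lambda - \L/4, \lambda]$, and by Lemma~\ref{lem:transportmap} the transport map $\Phis$ coincides with the identity on those same strips, so $\tmsL \circ \Phis \equiv 1$ there and $g_s$ vanishes near the endpoints of $\Lambda$. Regularity is clear since $\muL$ is $C^2$ and $\Phis$ is $C^1$.

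Next I would bound $g_s'$ pointwise. Writing $\tmsL = 1 + s\muL$, the choice of $\smax$ in \eqref{def:smax} guarantees $|s\muL| \leq \hal$, so $\tmsL$ is bounded away from zero. Using the transport identity $\Phis'(x) = 1/\tmsL(\Phis(x))$ from Lemma~\ref{lem:transportmap}, the chain rule gives
\begin{equation*}
g_s'(x) \;=\; -\,\frac{s\, \muL^{(\1)}(\Phis(x))\, \Phis'(x)}{\tmsL(\Phis(x))}\;\preceq\; s\,\bigl|\muL^{(\1)}(\Phis(x))\bigr|.
\end{equation*}

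I would then transfer this pointwise bound to a $V$-localized one. From \eqref{bound:psis0}, $|\Phis - \id|_{\0} \leq 1$, so for every $y \in V_x = [x-3, x+3]$ one has $\Phis(y) \in [x-4, x+4]$, and hence $\sup_{y \in V_x}|\muL^{(\1)}(\Phis(y))| \preceq |\muL|_{\1, V_x}$ after absorbing the $O(1)$ enlargement of the neighborhood into the implicit constant. This gives $|g_s|_{\1, V_i} \preceq s\,|\muL|_{\1, V_i}$ for every $i$. Plugging into Proposition~\ref{prop:aprioribounds} and noting that $g_s$ is supported in $[-\lambda,\lambda]$, so only indices with $|i| \leq \lambda$ contribute, yields the claimed inequality.

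The only mildly delicate step is the neighborhood transfer in the last paragraph, and it is harmless because $V_x$ has fixed size $3$ while $\Phis$ is a unit-size perturbation of the identity; everything else is the chain rule combined with the \emph{a priori} bound of Proposition~\ref{prop:aprioribounds}.
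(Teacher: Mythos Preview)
Your proof is correct and follows essentially the same route as the paper: compute the derivative of $-\log \tmsL \circ \Phis$ via the chain rule, use boundedness of $\Phis'$ and $1/\tmsL$ together with $|\Phis - \id|_{\0} \leq 1$ to get the local bound $|g_s|_{\1,V_i} \preceq s\,|\muL|_{\1,V_i}$, check compact support from the behavior near the endpoints, and apply Proposition~\ref{prop:aprioribounds}. Your write-up is, if anything, slightly more explicit about the neighborhood transfer step than the paper's.
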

\begin{proof}[Proof of Lemma \ref{lem:flucttmsLPhis}]
We start by computing the derivative of $x \mapsto \log \tmsL \circ \Phis(x)$ as
$$
\left(\log \tmsL \circ \Phis\right)'(x) = \frac{\left[ \tmsL' \circ \Phis(x)\right] \Phis'(x)}{\tmsL \circ \Phis(x)}.
$$
Using the fact that $\Phis$ is bounded by $1$, that $\Phis'$ and $\frac{1}{\tmsL}$ are bounded, and that $\tmsL' = \muL'$, we obtain
$$
\left(\log \tmsL \circ \Phis\right)'(x) \preceq s |\muL|_{\1, \V(x)}.
$$
Moreover, since $\Phis$ is the identity near the endpoints, and $\tmsL = 1$ near the endpoints, the map $x \mapsto \log \tmsL \circ \Phis(x)$  is compactly supported. Applying Proposition \ref{prop:aprioribounds}, we obtain \eqref{flucttmsLPhis}.
\end{proof}

\begin{coro}[The term $\FluRE(\eta)$ is often small]
\label{coro:smallFluRE}
We have 
\begin{equation}
\label{smallFluRE}
\Esp \left[ \left| \FluRE(\eta) \right| \right] = s \pto.
\end{equation}
\end{coro}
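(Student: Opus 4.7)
The plan is to take expectations in the deterministic bound provided by Lemma~\ref{lem:flucttmsLPhis}. After pulling out the factor $s$, the problem reduces to showing
\begin{equation*}
\sum_{i=-\lambda}^{\lambda} |\muL|_{\1, V_i}\, \Esp[\tD_i] = \pto,
\end{equation*}
and I would split the sum according to the four regimes of the derivative bound~\eqref{muL1}: $|i| \le 2\l$, $2\l \le |i| \le \lambda/2$, $\lambda/2 \le |i| \le \lambda - \L$, and $\lambda - \L \le |i| \le \lambda$.

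In the two bulk ranges, the raw expectation bound $\Esp[\tD_i] \preceq \sqrt{|i|}$ of~\eqref{discrestimateA} (which is $o(\sqrt{|i|})$ by the refinement~\eqref{discrestimateB}) is enough to produce summable contributions. Using $|\muL|_{\1,V_i}\preceq \l^{-2}$ and $\sum_{|i|\le 2\l}\sqrt{|i|}\preceq \l^{3/2}$ gives a contribution $o(\l^{-1/2})$ on $|i|\le 2\l$; using $|\muL|_{\1,V_i}\preceq \l\,|i|^{-3}$ and summing $|i|^{-5/2}$ gives $o(\l^{-1/2})$ on $2\l \le |i| \le \lambda/2$.

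The near-endpoint ranges form the genuine obstacle, since $\muL^{(1)}$ develops a $(\lambda-|i|)^{-3/2}$ singularity that is not summable against the crude estimate $\Esp[\tD_i]\preceq \sqrt{\lambda}$. To circumvent this I would invoke the last assertion of Proposition~\ref{prop:aprioribounds}, which (for fixed $\lambda$) allows us to replace $\tD_i$ by the adapted discrepancy $\DR_i$ on the right half of $\Lambda$ and symmetrically $\DL_i$ on the left half, and to use the sharpened estimate $\Esp[\DR_i] \preceq \sqrt{\lambda - i}$ coming from~\eqref{discresA}. The extra $\sqrt{\lambda-i}$ factor absorbs half of the singularity: the sum on $[\lambda/2, \lambda-\L]$ reduces via the substitution $k = \lambda - i$ to $\frac{\l}{\lambda^{3/2}}\sum_{k=\L}^{\lambda/2} k^{-1}\preceq \frac{\l \log \lambda}{\lambda^{3/2}}$, and the boundary layer $[\lambda-\L,\lambda]$ is handled by the flat bound $|\muL^{(1)}|\preceq \l \lambda^{-3/2}\L^{-3/2}$ together with $\sum_{i=\lambda-\L}^{\lambda} \sqrt{\lambda - i} \preceq \L^{3/2}$, producing a contribution of order $\frac{\l}{\lambda^{3/2}}$.

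Under the scaling constraint~\eqref{lvslambda}, each of the four pieces is $\pto$, the dominant one being $\frac{\l \log \lambda}{\lambda^{3/2}}$; gathering them yields the claim $\Esp[|\FluRE(\eta)|] = s\pto$. The main difficulty is really concentrated in the near-endpoint regime, where the choice between $\tD_i$, $\DL_i$ and $\DR_i$ is what makes the estimate close; everywhere else the estimate is almost automatic from the size bounds on $\muL^{(1)}$ and the discrepancy moment bounds.
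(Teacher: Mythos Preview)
Your overall strategy --- take expectations in~\eqref{flucttmsLPhis} and split the sum into the four ranges dictated by~\eqref{muL1} --- is exactly what the paper does. However, your diagnosis of the near-endpoint difficulty is mistaken: the singularity $(\lambda - |i|)^{-3/2}$ \emph{is} summable on $[\lambda/2,\lambda-\L]$, since $\sum_{k=\L}^{\lambda/2} k^{-3/2} \preceq \L^{-1/2}$. The paper therefore keeps $\tD_i$ throughout, uses the crude bound $\Esp[\tD_i]\preceq\sqrt{|i|}\preceq\sqrt{\lambda}$ in that range, and obtains
\[
\sum_{i=\lambda/2}^{\lambda-\L}\sqrt{\lambda}\,\frac{\l}{\lambda^{3/2}(\lambda-i)^{3/2}}
\preceq \frac{\l}{\lambda\sqrt{\L}}=\pto,
\]
with the same contribution from the boundary layer $[\lambda-\L,\lambda]$.

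Your detour through $\DR_i$ is thus unnecessary, and as written it carries a small gap: Proposition~\ref{prop:aprioribounds} allows one to replace $\tD_i$ by $\DR_i$ (or $\DL_i$) \emph{globally} in the bound, not region by region. You cannot use $\tD_i$ in the bulk and $\DR_i$ near the right endpoint inside a single inequality without additional justification (e.g.\ a partition-of-unity decomposition of the test function $\log\tmsL\circ\Phis$, applying Proposition~\ref{prop:aprioribounds} to each piece separately). This is repairable, but since the straightforward $\tD_i$ estimate already closes, there is no reason to go that route.
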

\begin{proof}
In view of \eqref{flucttmsLPhis}, we use the discrepancy estimate \eqref{discrestimateA} and the estimates \eqref{muL1} on the first derivative of $\muL$. We obtain
\begin{equation*}
\Esp\left[ \sum_{i = -\lambda}^{\lambda} |\muL|_{\1, \V(x)} \tD_i \right] \preceq \sum_{i=0}^{2\l} \sqrt{\l} \frac{1}{\l^2} + \sum_{i=2\l}^{\lambda / 2} \sqrt{i} \frac{\l \sqrt{\lambda}}{i^3 \sqrt{\lambda}}
+ \sum_{i=\lambda/2}^{\lambda - \L} \sqrt{\lambda}  \frac{\l }{\lambda^{3/2}  (\lambda -i)^{3/2}} + \sum_{i = \lambda-\L}^{\lambda} \sqrt{\lambda} \frac{\l}{\lambda^{3/2} \L^{3/2}},
\end{equation*}
and thus
$$
\Esp\left[ \sum_{i = -\lambda}^{\lambda} |\muL|_{\1, \V(x)} \tD_i \right] \preceq \frac{1}{\sqrt{\l}} + \frac{\l}{\sqrt{\L} \lambda} = \pto.
$$
\end{proof}

\subsection{Conclusion}
Combining Proposition \ref{prop:mainterms}, Lemma \ref{lem:relativeentropy}, Corollary \ref{coro:smallFluRE}, we obtain:
\begin{equation}
\label{MainpREpFluRE}
\Esp \left[  \left| \Main \right| + \left| \FluRE \right| \right]  + \left|\RE\right| = s \pto + s^2 \pto,
\end{equation}
which, in view of Lemma \ref{lem:energycomparison}, says that the interior-interior interactions before and after transport are often very close.

\section{Comparison of the energies II: the interior-exterior interaction}
\subsection{The difference field}
\begin{defi}[The difference field]
Let $\eta$ be a point configuration in $(-\lambda, \lambda)$, and let $\etas$ be the push-forward of $\eta$ by $\Phis$.

For $x \notin (-\lambda, \lambda)$, we let $\DF(\eta)(x)$ be the electrostatic field created at $x$ by the difference $\etas-\eta$, i.e.
\begin{equation}
\label{def:Es}
\DF(\eta)(x) :=  \int - \log |x-y| \left( d\etas(y) - d\eta(y) \right).
\end{equation}
\end{defi}

\begin{lem}[Decomposition of the difference field]
\label{lem:difffield}
We have
\begin{equation}
\label{Eps1}
\DF(\eta)(x) = s\Gam(x) + s\ErrorLog(x) + \ErrorDF(\eta)(x),
\end{equation}
with $\Gam$ as in \eqref{def:Gam}, $\ErrorLog$ as in \eqref{def:ErrorLog}, and $\ErrorDF(\eta)$ defined by
\begin{equation}
\label{def:ErrorField}
\ErrorDF(\eta)(x) = \int \log \left( 1 - \frac{\psis(y)}{x-y} \right) \left(d\eta(y) - dy \right).
\end{equation}
\end{lem}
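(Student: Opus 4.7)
The proof is a chain of elementary rearrangements, and the plan is to isolate a deterministic Lebesgue piece (which will produce $s\Gam + s\ErrorLog$) from a fluctuation piece (which will produce $\ErrorDF(\eta)$). First I would use that $\etas = \Phis_{*}\eta$ to collapse the integral against $d\etas - d\eta$ into a single integral against $d\eta$: by change of variables,
\begin{equation*}
\int -\log|x-y|\, d\etas(y) = \int -\log|x-\Phis(y)|\, d\eta(y),
\end{equation*}
so, recalling $\Phis(y) = y + \psis(y)$, one obtains
\begin{equation*}
\DF(\eta)(x) = \int \Bigl( -\log|x-\Phis(y)| + \log|x-y|\Bigr) d\eta(y) = \int -\log\left|1 - \frac{\psis(y)}{x-y}\right| d\eta(y).
\end{equation*}

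Next I would add and subtract the same integrand taken against Lebesgue measure on $\Lambda$, splitting the right-hand side as
\begin{equation*}
\int -\log\left|1 - \frac{\psis(y)}{x-y}\right| dy \;+\; \int -\log\left|1 - \frac{\psis(y)}{x-y}\right| (d\eta(y) - dy).
\end{equation*}
Up to the sign convention used in \eqref{def:ErrorField}, the second piece is exactly $\ErrorDF(\eta)(x)$. For the Lebesgue piece I would run the push-forward identity in reverse: since $\Phis$ pushes Lebesgue on $\Lambda$ onto $\tmsL\, dy$, the same algebra as in the first step gives
\begin{equation*}
\int -\log\left|1 - \frac{\psis(y)}{x-y}\right| dy = \int -\log|x-y|\,\tmsL(y)\, dy - \int -\log|x-y|\, dy = s\int -\log|x-y|\, \muL(y)\, dy,
\end{equation*}
having used $\tmsL = 1 + s\muL$. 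Splitting $\muL = \mulambda + (\muL - \mulambda)$ and invoking the definitions \eqref{def:Gam} and \eqref{def:ErrorLog} yields $s\Gam(x) + s\ErrorLog(x)$, which assembles to the claimed decomposition.

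The only point requiring care — which in this lemma is not really an obstacle but still must be checked — is that the integrands are unambiguously defined. Since $x \notin \Lambda$ while $y \in \supp(d\eta + dy) \subset \Lambda$, the function $y \mapsto \log|x-y|$ is smooth on $\Lambda$; moreover, by \eqref{bound:psis0} and the fact that $\psis$ vanishes on $[-\lambda, -\lambda+\L/4] \cup [\lambda-\L/4, \lambda]$, the ratio $\psis(y)/(x-y)$ stays uniformly bounded away from $1$ on $\Lambda$, so no absolute-value or principal-value subtlety arises. The statement is therefore a bookkeeping identity whose content is the separation between the deterministic transport of Lebesgue measure and the fluctuation of $\eta$ around it; the serious analytic work will come later, when bounding $\ErrorDF(\eta)(x)$ via the a priori estimates of Proposition \ref{prop:aprioribounds}.
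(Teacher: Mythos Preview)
Your proof is correct and follows essentially the same route as the paper: the paper decomposes $d\etas - d\eta = (d\etas - \tmsL\,dy) + (\tmsL\,dy - dy) + (dy - d\eta)$, uses the push-forward on the first bracket and combines it with the third to produce $\ErrorDF$, and reads off $s\Gam + s\ErrorLog$ from the middle bracket --- which is exactly your add-and-subtract-Lebesgue argument reorganized. Your observation about the sign in \eqref{def:ErrorField} is on point: the paper's own derivation yields $-\log\bigl(1-\psis(y)/(x-y)\bigr)$ in the integrand, so the formula as displayed carries a harmless sign typo (all downstream estimates are on absolute values, so nothing later is affected).
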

\begin{proof}
We simply write $\etas - \eta = (\etas - \tmsL) + (\tmsL - dy) + (dy - d\eta)$. We have
$$
\int - \log |x-y| (d\etas(y) - \tmsL(y) dy) = \int - \log |x - \Phis(y)| (d\eta(y) - dy),
$$
and we define $\ErrorDF(\eta)(x)$ as the term such that
\begin{equation}
\label{def:ErrorDFA}
\int - \log |x - \Phis(y)| (d\eta(y) - dy) = \int - \log |x - y| (d\eta(y) - dy) + \ErrorDF(\eta)(x),
\end{equation}
which coincides with the expression given in \eqref{def:ErrorField}. By definition, we obtain
$$
\DF(\eta)(x) = \int - \log |x-y| (\tmsL(y) dy  - dy) + \ErrorDF(\eta)(x).
$$
Since $\tmsL(y) = 1 + s \muL(y)$, the first term in the right-hand side is the logarithmic potential generated by $s \muL$, which is given by the sum $s \Gam + s \ErrorLog$. 
\end{proof}

\begin{lem}
Assume $x \geq \lambda$. We have
\begin{align}
\label{ErrorDFpaprime}
\ErrorDF(\eta)(x) & \preceq \sum_{i=-\lambda}^{\lambda} \left(\frac{|\psis(i)|}{(x-i)^2} + \frac{|\psis'(i)|}{(x-i)} \right) \DR_i \\
\label{ErrorDFprime}
\left(\ErrorDF(\eta)\right)'(x) & \preceq \sum_{i = -\lambda}^{\lambda} \left(\frac{|\psis(i)|}{(x-i)^3} + \frac{|\psis'(i)|}{(x-i)^2} \right) \DR_i. 
\end{align}
\end{lem}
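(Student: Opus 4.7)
The plan is to apply Proposition~\ref{prop:aprioribounds} (in the $\DR$ variant) twice: once directly to the integrand of $\ErrorDF$, and once to the integrand obtained after differentiating $\ErrorDF(\eta)$ in $x$. The preparatory observation is that, by Lemma~\ref{lem:transportmap}, the map $\psis = \Phis - \id$ vanishes on $[-\lambda, -\lambda + \L/4] \cup [\lambda - \L/4, \lambda]$, so the integrand defining $\ErrorDF$ is effectively supported in $y \in [-\lambda + \L/4, \lambda - \L/4]$. For $x \geq \lambda$ and $y$ in this range, $x - y \geq \L/4$; combined with $|\psis|_{\0} \leq 1$ from \eqref{bound:psis0} and the regime \eqref{lvslambda}, this makes $1 - \psis(y)/(x-y)$ bounded away from $0$ and forces the factors $x - y$ and $x - \Phis(y) = (x-y) - \psis(y)$ to be comparable, uniformly in $\l, \lambda$.

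For \eqref{ErrorDFpaprime}, set $g_x(y) := \log(1 - \psis(y)/(x-y))$. A direct computation yields
\[
g_x'(y) \preceq \frac{|\psis'(y)|}{x-y} + \frac{|\psis(y)|}{(x-y)^2}.
\]
Since $V_i$ has fixed diameter $6$ while $x - i \geq \L/4$, the quantity $x - y$ is comparable to $x - i$ on $V_i$, and by the $C^1$ control of $\psis$ from Lemma~\ref{lem:transportmap}, $|\psis(y)|, |\psis'(y)|$ are comparable to $|\psis(i)|, |\psis'(i)|$ on $V_i$. Hence $|g_x|_{\1, V_i} \preceq |\psis(i)|/(x-i)^2 + |\psis'(i)|/(x-i)$, and inserting this into \eqref{aprioribound} with the $\DR$ variant delivers \eqref{ErrorDFpaprime}.

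For \eqref{ErrorDFprime}, use the factorization $\log(1 - \psis(y)/(x-y)) = \log(x - \Phis(y)) - \log(x-y)$, whose arguments are positive since $x \geq \lambda$ and $y, \Phis(y) \in [-\lambda, \lambda]$, and differentiate in $x$ under the integral; this is licit because $\eta$ is a finite configuration in $\Lambda$, so the integral reduces to a finite sum minus an integral over a bounded interval. We obtain
\[
(\ErrorDF(\eta))'(x) = \int \frac{\psis(y)}{(x-y)(x - \Phis(y))}(d\eta(y) - dy).
\]
Call the integrand $h_x(y)$. A routine differentiation in $y$, together with the comparability of $x - y$ and $x - \Phis(y)$, gives
\[
|h_x'(y)| \preceq \frac{|\psis'(y)|}{(x-y)^2} + \frac{|\psis(y)|}{(x-y)^3}.
\]
Replacing once more $x - y$ by $x - i$ and $\psis, \psis'$ by their values at $i$ on $V_i$, a second application of \eqref{aprioribound} in the $\DR$ variant produces \eqref{ErrorDFprime}.

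The main point requiring attention is the preparatory step: one must ensure $x - y \geq \L/4$ on the effective support in order to make both the log-denominator bounded away from $0$ and the factors $x - y$, $x - \Phis(y)$ comparable with constants independent of $\l, \lambda$. Once this is secured, the proof reduces to two elementary derivative computations and two applications of Proposition~\ref{prop:aprioribounds}.
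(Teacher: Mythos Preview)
Your proof is correct and follows essentially the same approach as the paper: introduce the auxiliary function $\Hh(x,y)=\log(1-\psis(y)/(x-y))$, observe that it is compactly supported in $y$ because $\psis$ vanishes near the endpoints, bound $\partial_y\Hh$ and $\partial^2_{xy}\Hh$ using $x-y-\psis(y)\approx x-y$, and then apply Proposition~\ref{prop:aprioribounds} with the $\DR$ variant. Your only cosmetic deviation is computing $\partial_x\Hh$ via the factorisation $\log(x-\Phis(y))-\log(x-y)$ rather than directly, which yields the same expression $h_x(y)=\psis(y)/[(x-y)(x-\Phis(y))]$; the paper simply writes the bounds \eqref{pyH}, \eqref{pyxH} and invokes the proposition.
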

\begin{proof}
Let us introduce the auxiliary function 
\begin{equation}
\label{def:Gxy} \Hh(x,y) := \log \left( 1 - \frac{\psis(y)}{x-y} \right),
\end{equation}
we re-write \eqref{def:ErrorField} as
$$
\ErrorDF(\eta)(x) = \int \Hh(x,y) \left(d\eta(y) - dy\right).
$$  
In particular, for $x \geq \lambda$, we may differentiate under the integral sign and get
$$
\left(\ErrorDF(\eta)\right)'(x) = \int \partial_x \Hh(x,y) \left(d\eta(y) - dy\right).
$$ 

Since $\psis$ vanishes near the endpoints, for any $x$ the function $\Hh(x, \cdot)$ is compactly supported with respect to the second variable. Moreover since $\psis(y) = 0$ for $y \geq \lambda - \L/10$, and $x \geq \lambda$, we may write 
$$
x- y - \psis(y) \approx x - y.
$$
A direct computation shows that
\begin{align}
\label{pyH} \partial_y \Hh(x,y) & \preceq  \frac{|\psis'(y)|}{|x-y|} + \frac{|\psis(y)|}{|x-y|^2} \\
\label{pyxH} \partial^2_{yx} \Hh(x,y) & \preceq  \frac{|\psis'(y)|}{|x-y|^2} + \frac{|\psis(y)|}{|x-y|^3},
\end{align}
as can be checked informally by treating $\psis$ as a perturbation, writing
$$
\Hh(x,y) \approx \frac{-\psis(y)}{x-y}, 
$$
and differentiating.

Using the a priori bound on fluctuations of Proposition \ref{prop:aprioribounds} with discrepancy $\DR_i$, and \eqref{pyH}, resp. \eqref{pyxH}, we obtain \eqref{ErrorDFpaprime}, resp. \eqref{ErrorDFprime}.
\end{proof}

\begin{coro}[The contribution of $\ErrorDF$ is often small]
\label{coro:ErrorDFpetit}
In particular, 
\begin{equation}
\label{petitDF}
\Esp \left[ \left| \int_{\Lambda^c} \ErrorDF(\C)(x) (d\C - dx)  \right| \right] = s \pto
\end{equation}
\end{coro}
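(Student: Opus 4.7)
The plan is to combine the \emph{a priori} fluctuation bound (Proposition \ref{prop:aprioribounds}) applied to the function $x \mapsto \ErrorDF(\C)(x)$ on $\Lambda^c$, with the pointwise derivative estimate \eqref{ErrorDFprime}, and to take expectations using the discrepancy estimates \eqref{discresA} together with the piecewise bounds on $\psis$ from Lemma \ref{lem:additionalpropertiespsis} and on $\psis'$ from Lemma \ref{lem:transportmap} combined with Lemma \ref{lem:boundsonmuL}. By splitting $\Lambda^c = (-\infty,-\lambda] \cup [\lambda,+\infty)$ and using the symmetric roles of $\pm\lambda$, it suffices to treat the right half-line. On $[\lambda,+\infty)$ the function $\ErrorDF(\C)$ is of class $C^1$ and, from the integral formula \eqref{def:ErrorField} together with the fact that $\psis$ is compactly supported inside $\Lambda$, decays fast enough at infinity to justify (after a harmless truncation at some large $R$ that one lets tend to infinity) an application of Proposition \ref{prop:aprioribounds} in its $\DR$-variant, yielding
\begin{equation*}
\left| \int_{\lambda}^{+\infty} \ErrorDF(\C)(x)(d\C - dx) \right| \preceq \sum_{i \geq \lambda} |\ErrorDF(\C)|_{\1, V_i} \, \DR_i.
\end{equation*}

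Next I substitute \eqref{ErrorDFprime}, which is valid uniformly on each $V_i$ since $i \geq \lambda$ keeps $V_i$ at distance at least a few units from the support of $\psis$. This produces a double sum over $i \geq \lambda$ and $j \in [-\lambda, \lambda]$ of terms of the form
\begin{equation*}
\left( \frac{|\psis(j)|}{(i-j)^3} + \frac{|\psis'(j)|}{(i-j)^2} \right) \DR_i \, \DR_j.
\end{equation*}
Taking expectations and applying Cauchy--Schwarz together with the discrepancy estimate \eqref{discresA} (in its $\DR$-version noted after \eqref{discrestimateB}), the product $\Esp[\DR_i \DR_j]$ is bounded by a constant times $\sqrt{(1+i-\lambda)(1+\lambda-j)}$. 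I then insert the piecewise bounds: for $\psis$ from \eqref{psisplusprecis2}, and for $\psis'$ via \eqref{bound:psis1} combined with the piecewise control on $|\muL|_{\0}$ in \eqref{muL0}. This partitions the $j$-range into the four regions $|j| \leq 10\l$, $10\l \leq |j| \leq \lambda/2$, $\lambda/2 \leq |j| \leq \lambda - \L$, and $\lambda - \L \leq |j| \leq \lambda$, and after comparing the resulting finite sums over $(i,j)$ with explicit integrals, each region contributes a term of order $s \cdot o_{\l,\lambda}(1)$.

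The main obstacle is the bookkeeping in the near-endpoint region $\lambda/2 \leq |j| \leq \lambda$, where $\psis(j)$ and $\psis'(j)$ become large as $j \to \lambda$; the compensating smallness comes from the factor $\sqrt{1 + \lambda - j}$ in $\Esp[\DR_j]$, together with the lower bound $i - j \geq i - \lambda \geq 1$ that prevents the denominators $(i-j)^2$, $(i-j)^3$ from deteriorating when $j$ is close to $\lambda$. Gathering all contributions and factoring out the single power of $s$ carried by $\psis$ and $\psis'$, the total bound has the form $s \cdot o_{\l, \lambda}(1)$ under the assumption \eqref{lvslambda}, which is exactly the desired $s \pto$.
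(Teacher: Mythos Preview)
Your approach is the same as the paper's, but two points in your sketch do not quite go through.

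First, Proposition \ref{prop:aprioribounds} is stated for \emph{compactly supported} test functions. Your truncation at large $R$ handles the decay at infinity, but at the left endpoint $x=\lambda$ the integrand $\ErrorDF(\C)(\lambda)$ is generically nonzero, so the summation by parts leaves a boundary contribution of the form $\ErrorDF(\C)(\lambda)\,\Discr_{[\lambda,\lambda+1]}$ which you cannot simply discard. The paper makes this explicit via a ``hard edge'' variant of the \emph{a priori} bound and controls the expectation of that boundary term separately, using \eqref{ErrorDFpaprime} rather than \eqref{ErrorDFprime}.

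Second, and more seriously, the rough discrepancy bound \eqref{discresA} alone does not close the argument. Consider the $\psis'$-contribution when $|j|\leq \lambda/2$ (your first two $j$-regions): there $(i-j)\asymp i$ and $\sqrt{1+\lambda-j}\preceq\sqrt{\lambda}$, so after summing in $j$ (using $\sum_j|\psis'(j)|\preceq s$) one is left with
\[
s\sqrt{\lambda}\,\sum_{i\geq\lambda}\frac{\sqrt{1+i-\lambda}}{i^{2}},
\]
which is only $s\,O(1)$, not $s\,\pto$. The paper salvages this by invoking the sharper estimate \eqref{discrestimateB} (ultimately coming from \eqref{discrasym}), i.e.\ $\Esp[(\DR_i)^2]=o(|i-\lambda|)$, which allows one to replace $\sqrt{1+i-\lambda}$ by $o(\sqrt{1+i-\lambda})$ for the bulk of the $i$-sum. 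Citing only \eqref{discresA}, your claim that ``each region contributes a term of order $s\cdot o_{\l,\lambda}(1)$'' is not supported.
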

We give the proof of Corollary \ref{coro:ErrorDFpetit} in Section \ref{sec:proofErrorDF}.

\subsection{The logarithmic potential and its fluctuations}
In this section, we consider the logarithmic potential $\Gam$ generated by $\mulambda$, as defined in \eqref{def:Gam}.

We state some bounds on $\Gam$ and its first derivative.
\begin{lem}[Controls on $\Gam$]
\label{lem:controlonGam}
\begin{align}
\label{Gambord} & \Gam(x) \preceq \frac{\l \log^2(\lambda)}{\lambda}, \quad |x| \in [\lambda /2, 2 \lambda] \\
\label{Gampprimepres} & \Gam'(x) \preceq \frac{\l}{ \lambda^{3/2} \sqrt{x-\lambda}},  \quad \lambda \leq |x| \leq 4 \lambda \\
\label{Gamprimeloin} & \Gam'(x) \preceq \frac{\l \log(\lambda)}{x^2}, \quad |x| \geq 4\lambda.
\end{align}
\end{lem}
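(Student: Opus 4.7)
The plan is to exploit the mass-zero property $\int \mulambda = 0$ for cancellation, to use the identity $\Gam'(x) = \varphi'(x)$ on $\Lambda$ from Lemma~\ref{lem:propofperturb}, and to split integrals by region using the piecewise bounds of Lemma~\ref{lem:boundsonmulambda}. A crucial preliminary is the first-moment estimate
$$
\int |y|\, |\mulambda(y)|\, dy \preceq \l \log\lambda,
$$
where the dominant contribution comes from the intermediate regime $2\l \leq |y| \leq \lambda/2$, in which $|\mulambda(y)| \preceq \l/y^2$ and the bound integrates to $\l \log(\lambda/\l)$.

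I would first prove the two derivative bounds, for which $x$ lies outside the support of $\mulambda$ and differentiation under the integral is valid. In both cases the mass-zero trick gives
$$
\Gam'(x) = -\int \frac{\mulambda(y)}{x-y}\, dy = -\int \frac{y\, \mulambda(y)}{x(x-y)}\, dy.
$$
For $|x| \geq 4\lambda$, one has $|x-y| \asymp |x|$ for $|y|\leq\lambda$, so the right-hand side is bounded by $|x|^{-2}\int |y| |\mulambda(y)|\, dy$, which yields \eqref{Gamprimeloin}. For $\lambda \leq x \leq 4\lambda$, the denominator $|x-y|$ becomes small when $y$ is near $\lambda$; using $|\mulambda(y)| \preceq \l \lambda^{-3/2}(\lambda-y)^{-1/2}$ on $[\lambda/2,\lambda]$ and the elementary computation $\int_0^{\lambda/2} du/(\sqrt{u}(a+u)) \preceq a^{-1/2}$ with $a = x-\lambda$ (obtained by splitting at $u = a$), one gets a contribution of size $\l / (\lambda^{3/2}\sqrt{x-\lambda})$. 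The contributions of the other $y$-regions (namely $|y| \leq \lambda/2$ and $y \in [-\lambda, -\lambda/2]$) are of lower order, since $|x-y| \asymp \lambda$ there; combined they yield \eqref{Gampprimepres}.

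For the value bound \eqref{Gambord}, I first observe that since $\varphi$ is supported in $(-\l,\l) \subset (-\lambda/2, \lambda/2)$, the relation $\Gam'(x) = \varphi'(x)$ implies that $\Gam$ is constant on $[\l, \lambda)$ (and similarly on $(-\lambda, -\l]$). By continuity of the logarithmic potential (the singularities of $\mulambda$ at $\pm\lambda$ being merely $L^1$), this constant equals $\Gam(\lambda)$. I then compute, again using mass-zero,
$$
\Gam(\lambda) = -\int \log\left|1 - \tfrac{y}{\lambda}\right| \mulambda(y)\, dy,
$$
and split into $|y| \leq \lambda/2$, where $|\log|1-y/\lambda|| \preceq |y|/\lambda$ and the contribution is controlled by the moment estimate as $\preceq \l\log\lambda/\lambda$, and $\lambda/2 \leq y \leq \lambda$, where the singular log is integrable against $\l\lambda^{-3/2}(\lambda-y)^{-1/2}$; the substitution $u = \lambda - y$ reduces this to $\l\lambda^{-3/2}\int_0^{\lambda/2}\log(\lambda/u)/\sqrt{u}\, du \preceq \l/\lambda$. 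Extending from $x = \lambda$ to $x \in [\lambda, 2\lambda]$ by integrating the derivative bound \eqref{Gampprimepres} adds only $\preceq \l/\lambda$, so $\Gam(x) \preceq \l\log\lambda/\lambda$, comfortably implying the stated $\l\log^2\lambda/\lambda$ bound.

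The main obstacle is careful bookkeeping of the four or five $y$-regimes in each integral and the treatment of the boundary $y \to \lambda$, where an integrable power singularity of $\mulambda$ meets a Cauchy or logarithmic kernel; in each case the argument reduces, after substitution, to a one-dimensional elementary integral that can be evaluated by splitting the domain at the point where the two competing factors balance.
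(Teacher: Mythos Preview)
Your approach is correct and, for \eqref{Gambord}, takes a cleaner route than the paper's. For \eqref{Gamprimeloin} you and the paper argue identically via the mass-zero cancellation and the moment bound $\int |y|\,|\mulambda(y)|\,dy \preceq \l\log\lambda$. For \eqref{Gampprimepres} you again agree in substance: isolate the region $y\in[\lambda/2,\lambda]$ and reduce to the elementary integral $\int_0^{\lambda/2}\frac{du}{\sqrt{u}(a+u)} \preceq a^{-1/2}$. (A small caveat, which applies equally to the paper's own proof: the ``other regions'' contribute $\preceq \l\log\lambda/\lambda^2$ after your mass-zero rewriting --- or $\preceq 1/\lambda$ in the paper's direct estimate --- and this is not literally dominated by $\l/(\lambda^{3/2}\sqrt{x-\lambda})$ once $x-\lambda$ is of order $\lambda$. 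The honest bound carries an additive term of that size, which is harmless in the downstream application in Corollary~\ref{coro:GamVPhismall}.)

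Where you genuinely differ is \eqref{Gambord}. The paper writes $\mulambda(y) = -\pi^{-1}(\lambda^2-y^2)^{-1/2}\HLP(y)$, expands $\log|\lambda-y|\cdot(\lambda^2-y^2)^{-1/2}$ about $y=0$, and invokes the auxiliary Lemma~\ref{lem:integraledeH} to control $\int_{-\lambda/2}^{\lambda/2}\HLP(y)\,dy$; the first-order error term in that expansion is what produces the second $\log\lambda$ factor. You instead exploit $\Gam' = \varphi' = 0$ on $(\l,\lambda)$ to reduce the whole interval $[\lambda/2,\lambda]$ to the single value $\Gam(\lambda)$, then subtract the constant $\log\lambda\cdot\int\mulambda = 0$ and bound $\int\log|1-y/\lambda|\,\mulambda(y)\,dy$ region by region using only the pointwise bounds of Lemma~\ref{lem:boundsonmulambda}. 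This is more elementary --- it bypasses Lemma~\ref{lem:integraledeH} entirely --- and yields the sharper $\l\log\lambda/\lambda$. The paper's route is tied to the factored form of $\mulambda$ and naturally picks up one more logarithm.
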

We give the proof of Lemma \ref{lem:controlonGam} in Section \ref{sec:proofcontrolGam}.

\begin{lem}[Fluctuations of the logarithmic potential]
\label{lem:fluctGamvarphi}
We have
\begin{equation}
\label{Gammoinsvarphi}
 \int (\Gam(x) - \varphi(x)) (d\C - dx) \preceq \I + \II + \III,
\end{equation}
with 
\begin{align}
\label{GamI}
\I & \preceq  \sum_{i = \lambda + 2 \sqrt{\lambda}}^{4 \lambda} \frac{\l}{\lambda^{3/2} \sqrt{i - \lambda}} \DR_i + \sum_{i = 4\lambda}^{+\infty} \frac{\l \log(\lambda)}{i^2} \DR_i 
+ \sum_{i= \lambda + 2 \sqrt{\lambda}}^{\lambda + 4 \sqrt{\lambda}} \frac{\l \log^2(\lambda)}{\lambda^{3/2}} \DR_i, \\
\label{GamII}
\II & \preceq  \frac{\l \log^2(\lambda)}{\lambda^{3/2}} \sum_{|i| = \lambda - 4 \sqrt{\lambda}}^{\lambda - 2 \sqrt{\lambda}} \tD_i,  \\
\label{GamIII}
\III & \preceq \frac{\l \log^2(\lambda)}{\lambda} \left( \sqrt{\lambda} + |\Discr_{[\lambda - 4 \sqrt{\lambda}, \lambda + 4 \sqrt{\lambda}]}| + |\Discr_{[-\lambda - 4 \sqrt{\lambda}, -\lambda + 4 \sqrt{\lambda}]}|\right).
\end{align}
\end{lem}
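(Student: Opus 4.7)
The starting point is that by Lemma \ref{lem:propofperturb}, $\Gam' = \varphi'$ on $\Lambda$, so $\Gam - \varphi$ is a constant $c_\Lambda$ on $\Lambda$. Since $\varphi$ is supported in $(-\l, \l) \subset \Lambda$, evaluating at any point of $[-\lambda, -\l]$ and invoking \eqref{Gambord} gives $|c_\Lambda| \preceq \frac{\l \log^2 \lambda}{\lambda}$.

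The plan is to decompose $\Gam - \varphi$ via two smooth cutoffs tailored to the scale $\sqrt{\lambda}$ near the boundary of $\Lambda$. Let $\chi$ be a smooth function equal to $1$ on $[-\lambda + 4\sqrt{\lambda}, \lambda - 4\sqrt{\lambda}]$, supported in $[-\lambda + 2\sqrt{\lambda}, \lambda - 2\sqrt{\lambda}]$, with $|\chi|_{\1} \preceq 1/\sqrt{\lambda}$ concentrated in two \emph{inner transition strips}; and let $\widetilde{\chi}$ be supported in $\{|x| \geq \lambda + 2\sqrt{\lambda}\}$, equal to $1$ on $\{|x| \geq \lambda + 4\sqrt{\lambda}\}$, with $|\widetilde{\chi}|_{\1} \preceq 1/\sqrt{\lambda}$ concentrated in two \emph{outer transition strips}. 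Setting
$$
R := (\Gam - \varphi) - c_\Lambda \chi - \Gam \, \widetilde{\chi},
$$
so that $\Gam - \varphi = c_\Lambda \chi + \Gam \, \widetilde{\chi} + R$, a case analysis over the various zones shows that $R$ is supported in the two double boundary strips $\{|x - \lambda| \leq 4\sqrt{\lambda}\} \cup \{|x + \lambda| \leq 4\sqrt{\lambda}\}$, and satisfies $|R|_{\0} \preceq \frac{\l \log^2 \lambda}{\lambda}$ (by the bound on $|c_\Lambda|$ above and the bound on $\Gam$ from \eqref{Gambord}).

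Each of the three pieces is then handled separately, producing $\II$, $\I$, and $\III$ respectively. For $c_\Lambda \chi$: Proposition \ref{prop:aprioribounds} with $\tD_i$ and $|c_\Lambda \chi|_{\1, V_i} \preceq |c_\Lambda|/\sqrt{\lambda}$ localized to the inner transition strips directly yields $\II$. For $\Gam \, \widetilde{\chi}$: after a standard truncation at large $|x|$ to reduce to a compactly supported function (the tail being harmless, as $\Gam'(x) \preceq \l \log(\lambda)/x^2$ decays fast at infinity and discrepancies grow sub-linearly in expectation), Proposition \ref{prop:aprioribounds} with $\DR_i$ on the right and $\DL_i$ on the left gives: the bound \eqref{Gampprimepres} on $\Gam'$ for $\lambda + 2\sqrt{\lambda} \leq |i| \leq 4\lambda$ produces the first sum of $\I$; the bound \eqref{Gamprimeloin} for $|i| \geq 4\lambda$ produces the second; and the product-rule contribution $\Gam \widetilde{\chi}'$ on the outer transition strip $[\lambda + 2\sqrt{\lambda}, \lambda + 4\sqrt{\lambda}]$, bounded via \eqref{Gambord} and $|\widetilde{\chi}|_{\1} \preceq 1/\sqrt{\lambda}$, produces the third. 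For $R$, bound directly
$$
\left|\int R (d\C - dx)\right| \leq |R|_{\0}\, \bigl(|\C \cap \supp R| + |\supp R|\bigr) \preceq \frac{\l \log^2 \lambda}{\lambda}\bigl(\sqrt{\lambda} + |\Discr_{[\lambda - 4\sqrt{\lambda}, \lambda + 4\sqrt{\lambda}]}| + |\Discr_{[-\lambda - 4\sqrt{\lambda}, -\lambda + 4\sqrt{\lambda}]}|\bigr),
$$
which is $\III$.

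The main technical point is the calibration of the transition and boundary regions at scale $\sqrt{\lambda}$: this scale is exactly what is required so that the singular behavior $\Gam'(x) \sim \l/(\lambda^{3/2}\sqrt{|x - \lambda|})$ of the logarithmic potential of $\mulambda$ near $\pm \lambda$ remains manageable after integration against the discrepancy bounds, while the $L^\infty$ control $\Gam \preceq \frac{\l \log^2 \lambda}{\lambda}$ in the neighborhood of $\pm \lambda$ keeps the $\III$-contribution at a controllable $\sqrt{\lambda}$-area scale.
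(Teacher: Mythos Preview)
Your proof is correct and follows essentially the same route as the paper: a three-way cutoff decomposition at scale $\sqrt{\lambda}$ around $\pm\lambda$, with the exterior piece handled via Proposition~\ref{prop:aprioribounds} and the bounds \eqref{Gambord}--\eqref{Gamprimeloin}, the interior piece reducing to the transition strips because $(\Gam-\varphi)'=0$ on $\Lambda$, and the leftover boundary strips treated by the crude $L^\infty$ bound \eqref{Gambord}. Your explicit naming of the constant $c_\Lambda$ is a minor presentational clarification of what the paper does implicitly when it observes that $(\Gam-\varphi)\chi_3$ has derivative $\Gam\,\chi_3'$.
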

\begin{proof}
Let $\chi_1$ be a smooth non-negative function such that 
\begin{equation}
\label{bonchi2}
\chi_1 = 1 \text{ on } [\lambda + 4\sqrt{\lambda}, +\infty)  \quad  \chi_1 = 0 \text{ on } (-\infty, \lambda + 2 \sqrt{\lambda}],
\end{equation}
with $|\chi_1|_{\0} \leq 1$ and $|\chi_1|_{\1} \preceq \frac{1}{\sqrt{\lambda}}$.

Let $\chi_2$ be a smooth non-negative function such that 
\begin{equation}
\label{bonchi33}
\chi_2 = 1 \text{ on } [-\infty, - \lambda - 4\sqrt{\lambda})  \quad  \chi_2= 0 \text{ on } [-\lambda - 2 \sqrt{\lambda}, + \infty),
\end{equation}
with $|\chi_2|_{\0} \leq 1$ and $|\chi_2|_{\1} \preceq \frac{1}{\sqrt{\lambda}}$.

Finally, let $\chi_3$ be a smooth non-negative function such that 
\begin{equation}
\label{bonchi3}
\chi_3 = 1 \text{ on } [-\lambda + 4 \sqrt{\lambda}, \lambda - 4 \sqrt{\lambda}]  \quad  \chi_3 = 0 \text{ outside } [- \lambda + 2 \sqrt{\lambda}, \lambda - 2 \sqrt{\lambda}],
\end{equation} 
with $|\chi_3|_{\0} \leq 1$ and $|\chi_3|_{\1} \preceq \frac{1}{\sqrt{\lambda}}$.

We write trivially $\Gam(x)$ as the sum
$$
\Gam(x) \chi_1(x) + \Gam(x) \chi_2(x) + \Gam(x) \chi_3(x) +  \Gam(x) (1 - \chi_1(x) - \chi_2(x) - \chi_3(x)),
$$
and we integrate these terms against $d\C - dx$.

\textbf{The $\chi_1, \chi_2$ terms.} We have
$$
\left(\Gam \chi_1\right)'(x) = \Gam'(x) \chi_1(x) + \Gam(x) \chi_1'(x),
$$
where $\chi_1'$ is supported on $[\lambda + 2 \sqrt{\lambda}, \lambda + 4 \sqrt{\lambda}]$ and bounded by $\Oun\left( \frac{1}{\sqrt{\lambda}} \right)$. Applying Proposition \ref{prop:aprioribounds}, we obtain
\begin{equation*}
\int \left(\Gam \chi_1\right) (d\C - dx) \preceq \sum_{i = \lambda + 2 \sqrt{\lambda}}^{+ \infty} |\Gam|_{\1, V(i)}  \DR_i \\
+ \sum_{i= \lambda + 2 \sqrt{\lambda}}^{\lambda + 4 \sqrt{\lambda}} |\Gam|_{\0, V(i)} \frac{1}{\sqrt{\lambda}},
\end{equation*}
and using \eqref{Gambord}, \eqref{Gampprimepres}, \eqref{Gamprimeloin} we may write
\begin{multline*}
\int \left(\Gam \chi_1\right) (d\C - dx) \preceq \sum_{i = \lambda + 2 \sqrt{\lambda}}^{4 \lambda} \frac{\l}{\lambda^{3/2} \sqrt{i - \lambda}} \DR_i + \sum_{i = 4\lambda}^{+\infty} \frac{\l \log(\lambda)}{i^2} \DR_i \\
+ \sum_{i= \lambda + 2 \sqrt{\lambda}}^{\lambda + 4 \sqrt{\lambda}} \frac{\l \log^2(\lambda)}{\lambda^{3/2}} \DR_i.
\end{multline*}
Of course, $\Gam \chi_2$ satisfies the same inequality, with $\DL_i$ instead of $\DR_i$, and this yields \eqref{GamI} (we only keep track of the “right-hand” term, the estimates on “left-hand” term are the same).

\textbf{The $\chi_3$ term.}
We have 
$$
\left((\Gam - \varphi) \chi_3\right)'(x) = (\Gam - \varphi)'(x) \chi_3(x) + (\Gam(x) - \varphi) \chi_3'(x),
$$
but we know by \eqref{hmuDSE} that $(\Gam - \varphi)' = 0$ on the support of $\chi_3$, and moreover $\varphi$ is supported outside the support of $\chi_3'$, so we have in fact
$$
\left((\Gam - \varphi) \chi_3\right)'(x) = \Gam(x) \chi_3'(x),
$$
which is supported on $[-\lambda + 2 \sqrt{\lambda}, - \lambda + 4 \sqrt{\lambda}] \cup [\lambda - 4 \sqrt{\lambda}, \lambda - 2 \sqrt{\lambda}]$. We use Proposition \ref{prop:aprioribounds} and \eqref{Gambord} and the fact that $|\chi_3|_{\1} \preceq \frac{1}{\sqrt{\lambda}}$ to get
\begin{equation*}
 \int \left(\Gam - \varphi\right) \chi_3 (d\C - dx)  \preceq \sum_{|i| = \lambda - 4 \sqrt{\lambda}}^{\lambda - 2 \sqrt{\lambda}} \tD_i \frac{\l \log^2(\lambda)}{\lambda} \frac{1}{\sqrt{\lambda}},
\end{equation*}
which yields \eqref{GamII}.

\textbf{The $1-\chi_1-\chi_2 - \chi_3$ term.} 
The function $1-\chi_1-\chi_2 - \chi_3$ is supported near the endpoints, on $[\lambda - 4 \sqrt{\lambda}, \lambda + 4 \sqrt{\lambda}]$ and on the symmetric interval. We use \eqref{Gambord} to get
\begin{multline*}
\int \Gam (1-\chi_1-\chi_2 - \chi_3) (d\C - dx) \\
\preceq \frac{\l \log^2(\lambda)}{\lambda} \left( \sqrt{\lambda} + |\Discr_{[\lambda - 4 \sqrt{\lambda}, \lambda + 4 \sqrt{\lambda}]}| + |\Discr_{[-\lambda - 4 \sqrt{\lambda}, -\lambda + 4 \sqrt{\lambda}]}|\right),
\end{multline*}
where the last parenthesis is, up to a multiplicative constant, a bound on the the number of the points in the intervals, and on the length of the intervals. This yields \eqref{GamIII}.
\end{proof}

\begin{coro}[The contribution of $\Gam - \varphi$ is often small]
\label{coro:GamVPhismall}
We have
\begin{equation}
\label{espGammoinsvarphi}
\Esp \left[ \left| \int (\Gam(x)- \varphi(x)) (d\C - dx) \right| \right] \preceq \frac{\l \log^2(\lambda)}{\sqrt{\lambda}}.
\end{equation}
In particular, 
\begin{equation}
\label{small:LPmoinsvarphi}  \Esp \left[ \left| \int (\Gam(x)- \varphi(x)) (d\C - dx) \right| \right] = \pto.
\end{equation}
\end{coro}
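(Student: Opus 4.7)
The natural approach is to take expectations in the pointwise bound of Lemma \ref{lem:fluctGamvarphi}, so that
$$
\Esp\Bigl[\,\Bigl|\int(\Gam-\varphi)(d\C-dx)\Bigr|\,\Bigr] \preceq \Esp[\I]+\Esp[\II]+\Esp[\III],
$$
and to invoke the first-moment discrepancy estimates recorded in \eqref{discresA} and \eqref{discrestimateA}: namely $\Esp[|\Discr_I|]\preceq\sqrt{|I|}$, $\Esp[\tD_i]\preceq\sqrt{|i|}$, and (as noted in the Remark following \eqref{discrestimateA}) $\Esp[\DR_i]\preceq\sqrt{|\lambda-i|}$. This reduces the corollary to routine arithmetic in $\l$ and $\lambda$.

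For $\Esp[\I]$ I split \eqref{GamI} according to its three sums. The first is $\sum_{i=\lambda+2\sqrt{\lambda}}^{4\lambda}\frac{\l}{\lambda^{3/2}\sqrt{i-\lambda}}\cdot\sqrt{i-\lambda}\preceq\frac{\l}{\sqrt{\lambda}}$; the second is $\sum_{i\geq 4\lambda}\frac{\l\log\lambda}{i^2}\sqrt{i}\preceq\frac{\l\log\lambda}{\sqrt{\lambda}}$; the third has only $O(\sqrt{\lambda})$ terms, each with $\Esp[\DR_i]\preceq\lambda^{1/4}$, and gives the smaller quantity $O\bigl(\frac{\l\log^2\lambda}{\lambda^{3/4}}\bigr)$. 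For $\Esp[\II]$, the sum in \eqref{GamII} is over $O(\sqrt{\lambda})$ indices $i$ near $\pm\lambda$ with $\Esp[\tD_i]\preceq\sqrt{\lambda}$, producing $\Esp[\II]\preceq\frac{\l\log^2\lambda}{\lambda^{3/2}}\cdot\sqrt{\lambda}\cdot\sqrt{\lambda}=\frac{\l\log^2\lambda}{\sqrt{\lambda}}$. For $\Esp[\III]$, the two discrepancies of intervals of length $8\sqrt{\lambda}$ have expectation at most $O(\lambda^{1/4})$, which is dominated by the $\sqrt{\lambda}$ already appearing in \eqref{GamIII}, so that $\Esp[\III]\preceq\frac{\l\log^2\lambda}{\lambda}\cdot\sqrt{\lambda}=\frac{\l\log^2\lambda}{\sqrt{\lambda}}$ as well.

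Summing the three contributions delivers exactly \eqref{espGammoinsvarphi}, and then \eqref{small:LPmoinsvarphi} follows because $\frac{\l\log^2\lambda}{\sqrt{\lambda}}$ is $\pto$ (for each fixed $\l$ it tends to $0$ as $\lambda\to\infty$). There is no substantive obstacle here: the proof is mechanical, and the only care required is to pair each of the three sums of Lemma \ref{lem:fluctGamvarphi} with the correct discrepancy variable ($\DR_i$, $\tD_i$, or $\Discr_I$) and to partition the index ranges so that the exponents of $\lambda$ line up correctly — a matter of careful bookkeeping rather than a genuine difficulty.
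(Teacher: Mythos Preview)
Your proof is correct and follows essentially the same route as the paper's: you take expectations in Lemma \ref{lem:fluctGamvarphi}, apply the first-moment discrepancy bounds $\Esp[\DR_i]\preceq\sqrt{|i-\lambda|}$, $\Esp[\tD_i]\preceq\sqrt{|i|}$, and $\Esp[|\Discr_I|]\preceq\sqrt{|I|}$ term by term, and sum. The bookkeeping matches the paper's exactly, with the same dominant contribution $\frac{\l\log^2(\lambda)}{\sqrt{\lambda}}$ coming from $\II$ and $\III$.
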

\begin{proof}
For $\I$, we use the estimate \eqref{discrestimateB} in the form 
$$
\Esp[\DR_i] \preceq \sqrt{|i- \lambda|},
$$ 
and we get
$$
\Esp[\I] \preceq \sum_{i = \lambda + 2 \sqrt{\lambda}}^{4 \lambda} \frac{\l}{\lambda^{3/2}} \frac{ \sqrt{i - \lambda}}{\sqrt{i - \lambda}} + \sum_{i = 4 \lambda}^{+ \infty} \l \log(\lambda) \frac{\sqrt{i}}{i^2} + \sum_{i = \lambda + 2 \sqrt{\lambda}}^{\lambda + 4 \sqrt{\lambda}} \frac{\l \log^2(\lambda)}{\lambda^{3/2}} \sqrt{i - \lambda},
$$
which, after computation, gives
$$
\Esp[\I] \preceq \frac{\l \log(\lambda)}{\sqrt{\lambda}}.
$$

For $\II$, we use the estimate \eqref{discrestimateB}, in the form
$$
\Esp[\tD_i] \preceq \sqrt{|i|},
$$ 
and we get
$$
\Esp[\II] \preceq \frac{\l \log^2(\lambda)}{\lambda^{3/2}} \lambda = \frac{\l \log^2(\lambda)}{\sqrt{\lambda}}.
$$

Finally, using the discrepancy estimate \eqref{discresA}, we have
$$
\Esp[\III] \preceq \frac{\l \log^2(\lambda)}{\sqrt{\lambda}}.
$$

The dominant error term is thus $\frac{\l \log^2(\lambda)}{\sqrt{\lambda}}$, which proves the result.
\end{proof}

\subsection{Fluctuations of the error on the logarithmic potential}
\begin{lem}[Fluctuations of $\ErrorLog$]
\label{lem:fluctuationsofErrorLog}
We have
\begin{equation}
\label{FluctErrorLogL}
\int \ErrorLogL (d\C - dx) \preceq \Aa + \Bb + \Cc,
\end{equation}
with  
\begin{align*}
\Aa & \preceq \left( \sum_{i = -\lambda + 3\L}^{+ \infty} + \sum_{i = -\infty}^{- \lambda - 3 \L} \right) \frac{\L^{3/2} \l}{\lambda^{3/2} (- \lambda -i)^2} \DL_i \\
\Bb & \preceq \sum_{i=-\lambda - 4 \L}^{-\lambda + 4 \L} \frac{\l \log(\lambda)}{\lambda^{3/2} \sqrt{\L}} \DL_i \\
\Cc & \preceq \left( \L + |\Discr_{[-\lambda - 4\L, - \lambda + 4 \L]}| \right) \frac{\l \sqrt{\L} \log(\lambda)}{\lambda^{3/2}},
\end{align*}
and similarly for $\ErrorLogR$, replacing $\DL_i$ by $\DR_i$.
\end{lem}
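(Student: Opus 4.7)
The plan is to mimic the cut-off strategy from the proof of Lemma \ref{lem:fluctGamvarphi}, applied to $\ErrorLogL$ in place of $\Gam - \varphi$. The two tools are the pointwise and derivative bounds of Proposition \ref{prop:logpotmuL} and the a priori bound of Proposition \ref{prop:aprioribounds} with discrepancy $\DL_i$. The new feature is that the derivative estimate \eqref{bound:ErrorLog1} on $\ErrorLogL$ only holds outside a $2\L$-neighborhood of the singular left endpoint $-\lambda$, so the natural cut-off scale must be $\L$ itself, the same transition width used to construct $\muL$ in Lemma \ref{lem:approximatemu}.

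I would introduce a smooth partition of unity $\chi_1 + \chi_2 + \chi_3 \equiv 1$, with $\chi_3$ supported in $[-\lambda - 4\L, -\lambda + 4\L]$, equal to $1$ on $[-\lambda - 3\L, -\lambda + 3\L]$, and satisfying $|\chi_3^{(\1)}| \preceq 1/\L$, while $\chi_1$ and $\chi_2$ are supported in $[-\lambda + 3\L, +\infty)$ and $(-\infty, -\lambda - 3\L]$ respectively, with $|\chi_k^{(\1)}| \preceq 1/\L$. Then I split $\int \ErrorLogL (d\C - dx)$ into three contributions according to this partition.

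For the $\chi_1, \chi_2$ pieces I apply Proposition \ref{prop:aprioribounds} to $\ErrorLogL \chi_k$ and expand its derivative as $(\ErrorLogL)^{(\1)} \chi_k + \ErrorLogL \chi_k^{(\1)}$. On the support of $\chi_k$ the first summand is bounded by \eqref{bound:ErrorLog1}, and rewriting $\l^{5/2} = \l \cdot \L^{3/2}$ produces exactly the sum $\Aa$. The second summand is supported on the $\L$-wide transition strip near $-\lambda$; combining the pointwise bound \eqref{bound:ErrorLog0} with $|\chi_k^{(\1)}| \preceq 1/\L$, and rewriting $\l^{3/2}/\L = \l/\sqrt{\L}$, produces the summand $\Bb$.

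For the $\chi_3$ piece, since we have no derivative control in that window, I would estimate crudely
\[
\left| \int \ErrorLogL \chi_3 (d\C - dx) \right| \leq |\ErrorLogL|_{\0, [-\lambda-4\L, -\lambda+4\L]} \bigl( |\C_{[-\lambda-4\L, -\lambda+4\L]}| + 8\L \bigr),
\]
and use the trivial inequality $|\C_I| \leq |I| + |\Discr_I|$ together with the pointwise bound \eqref{bound:ErrorLog0} to obtain $\Cc$. The statement for $\ErrorLogR$ follows identically after reflection, with $\DL_i$ replaced by $\DR_i$. No step is conceptually deep; the main thing to check, and the only place where one could lose, is that the cut-off width $\L$ simultaneously makes the transition-strip contribution in $\Bb$ and the derivative-decay contribution in $\Aa$ of comparable magnitude, which holds precisely because $\L$ was chosen as the smoothing scale of $\muL$.
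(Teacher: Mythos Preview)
Your proposal is correct and follows essentially the same route as the paper's proof. The only cosmetic difference is that the paper uses a single cutoff $\chi$ (vanishing in a $3\L$-neighborhood of $-\lambda$ and equal to $1$ outside a $4\L$-neighborhood) together with its complement $1-\chi$, whereas you split the ``far'' region into two pieces $\chi_1,\chi_2$; your $\chi_1+\chi_2$ is the paper's $\chi$ and your $\chi_3$ is its $1-\chi$, and the three terms $\Aa,\Bb,\Cc$ arise from exactly the same product-rule and crude-bound mechanisms you describe.
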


\begin{proof}[Proof of Lemma \ref{lem:fluctuationsofErrorLog}]
Let $\chi$ be a smooth non-negative function such that 
\begin{equation}
\label{bonchi}
\chi = 1 \text{ on } [-\lambda - 3\L , -\lambda + 3\L], \quad  \chi = 0 \text{ on } [-\lambda - 4\L, -\lambda + 4\L],
\end{equation}
with $|\chi|_{\0} \leq 1$ and $|\chi|_{\1} \preceq \frac{1}{\L}$. We write trivially
\begin{equation*}
\int \ErrorLogL (d\C - dx) = \int \ErrorLogL(x) \chi(x) (d\C - dx) 
+ \int \ErrorLogL(x) \left(1-\chi(x)\right) (d\C - dx).
\end{equation*}
We have
\begin{equation}
\label{deriveeproduit}
\left( \ErrorLogL \chi \right)^{'}(x) = \left(\ErrorLogL\right)^{'}(x) \chi(x) + \ErrorLogL(x) \chi'(x),
\end{equation}
where $\chi'$ is supported on 
$[-\lambda - 4\L, -\lambda - 3\L] \cup [-\lambda + 3\L, -\lambda + 4\L]$ and is bounded by $\Oun\left(\frac{1}{\l} \right)$.

Using Proposition \ref{prop:aprioribounds}, we obtain
\begin{multline}
\label{partieAP}
\int \ErrorLogL(x) \chi(x) (d\C - dx) \leq \left( \sum_{i=-\lambda+3\L}^{+ \infty} + \sum_{i = - \infty}^{-\lambda - 3\L} \right) |\ErrorLogL|_{\1, \V(i)} \DL_i,
\\
+ \sum_{i=-\lambda - 4\L}^{-\lambda + 4\L} |\ErrorLogL|_{\0} \frac{1}{\L} \DL_i.
\end{multline}

On the other hand, since $1-\chi$ is supported on $[-\lambda-4\L, -\lambda + 4\L]$, we have a trivial bound
\begin{equation}
\label{partieNaive}
\int \ErrorLogL(x) \left(1-\chi(x) \right)( d\C - dx) \preceq \left(\L + |\Discr_{[-\lambda-4\L, -\lambda + 4\L]}| \right) |\ErrorLogL|_{\0},
\end{equation}
where $\L + |\Discr_{[-\lambda-4\L, -\lambda + 4\L]}|$ is (up to a multiplicative constant) a bound on the number of the points in the interval, and on the length of the interval.

We let $\Aa$ be the first line of \eqref{partieAP}, $\Bb$ be the second line of \eqref{partieAP} and $\Cc$ be the right-hand side of \eqref{partieNaive}, and we use the bounds of Proposition \ref{prop:logpotmuL} to obtain \eqref{FluctErrorLogL}. 
\end{proof}

\begin{coro}[The contribution of $\ErrorLog$ is often small]
\label{coro:ErrorLogsmall}
 We have 
\begin{equation}
\label{espFluctErrorLog}
\Esp \left[ \int \ErrorLog (d\C - dx) \right] \preceq \frac{\l^{5/2} \log(\lambda)}{\lambda^{3/2}}. 
\end{equation}
In particular,
\begin{equation}
\label{small:ErrorLog} \Esp \left[ \int \ErrorLog (d\C - dx) \right] = \pto.
\end{equation}
\end{coro}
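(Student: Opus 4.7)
The plan is to apply Lemma \ref{lem:fluctuationsofErrorLog} (together with its counterpart for $\ErrorLogR$) to bound $\int \ErrorLog\,(d\C - dx)$ by $\Aa + \Bb + \Cc$, then take expectations termwise and invoke the discrepancy estimates \eqref{discresA}--\eqref{discrestimateA}. Throughout we exploit $\L = \l$ and, by stationarity of $\sineb$, the moment bounds $\Esp[\DL_i] \preceq \sqrt{|\lambda + i|} + 1$ and $\Esp[|\Discr_{[a,b]}|] \preceq \sqrt{b-a}$ for intervals of length at least $1$.

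The three contributions are elementary to evaluate. For $\Esp[\Aa]$, the change of variable $k = \lambda + i$ reduces the sum to the convergent tail $\sum_{|k| \geq 3\L} \sqrt{|k|}/k^2 \preceq \L^{-1/2}$, giving a bound of order $\frac{\l^2}{\lambda^{3/2}}$. For $\Esp[\Bb]$, summing $\sqrt{|k|}$ over the $\Oun(\L)$ boundary indices yields a factor $\L^{3/2}$, producing a bound of order $\frac{\l^2 \log\lambda}{\lambda^{3/2}}$. For $\Esp[\Cc]$, the expected discrepancy $\Esp[|\Discr_{[-\lambda-4\L,-\lambda+4\L]}|] \preceq \sqrt{\L}$ is of lower order than the deterministic $\L$ in the prefactor, so the bound reduces to $\L \cdot \frac{\l \sqrt{\L} \log \lambda}{\lambda^{3/2}} = \frac{\l^{5/2}\log\lambda}{\lambda^{3/2}}$.

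The dominant term is $\Cc$, which delivers the announced bound \eqref{espFluctErrorLog}, and \eqref{small:ErrorLog} then follows since $\frac{\l^{5/2}\log\lambda}{\lambda^{3/2}}$ is $\pto$ in the iterated-limit sense used in the paper. The analytic work has already been done upstream by Lemma \ref{lem:fluctuationsofErrorLog} and Proposition \ref{prop:logpotmuL}, so no real obstacle remains; the corollary is essentially discrepancy bookkeeping. The single point worth noting is that $\Cc$ dominates $\Aa$ and $\Bb$ and is itself controlled by its \emph{deterministic} length component $\L$ rather than by the stochastic fluctuation $\sqrt{\L}$, so the sharper refinement \eqref{discrestimateB} would not improve the final estimate obtained by this method.
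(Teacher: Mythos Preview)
Your proposal is correct and follows essentially the same approach as the paper: apply Lemma \ref{lem:fluctuationsofErrorLog}, take expectations of $\Aa$, $\Bb$, $\Cc$ using the discrepancy estimates with $\Esp[\DL_i] \preceq \sqrt{|i+\lambda|}$, and identify $\Cc$ as the dominant term giving $\frac{\l^{5/2}\log\lambda}{\lambda^{3/2}}$. Your bounds for each of the three terms match the paper's, and your closing remark that the deterministic length $\L$ in $\Cc$ dominates the stochastic $\sqrt{\L}$ contribution is accurate and explains why no refinement via \eqref{discrestimateB} helps here.
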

\begin{proof}
It is of course enough to prove \eqref{espFluctErrorLog} for $\ErrorLogL$.  We use the discrepancy estimate \eqref{discrestimateA} in the form $\Esp[\DL_i] \preceq \sqrt{|i + \lambda|}$, and  write
$$
\Aa \preceq \frac{\L^{3/2} \l}{\lambda^{3/2}} \sum_{i=3 \L}^{+ \infty} \frac{\sqrt{i}}{i^2} = \frac{\L \l}{\lambda^{3/2}}.
$$
Using again $\Esp[\DL_i] \preceq \sqrt{|i + \lambda|}$, we have
$$
\Bb \preceq \L \frac{\l \log(\lambda)}{\lambda^{3/2} \L^{1/2}} \L^{1/2} = \frac{\L \l \log(\lambda)}{\lambda^{3/2}}.
$$
Finally, we use the discrepancy estimate \eqref{discresA} to get
$$
\Cc \preceq \frac{\l \L^{3/2} \log(\lambda)}{\lambda^{3/2}},
$$
and this is the dominant term.
\end{proof}

\section{Proof of the central limit theorem}
\label{sec:finalproof}
\subsection{A good event}
\begin{lem}[Defining a good event]
\label{lem:Event}
For any point configuration $\C$, and $\Lambda = (-\lambda, \lambda)$ fixed, let us decompose $\C$ as $\C = \nu \cup \gamma_{\La^c}$, where
$$
\nu = \C \cap \Lambda, \quad \gamma_{\La^c} = \C \cap \La^c.
$$
We let $\nu_s$ be the push-forward of $\nu$ by $\Phis$. We will consider $\C$ and $\Cs$, where\footnote{In fact, since $\Phis$ is the identity outside $\Lambda$, $\Cs$ itself is also the push-forward of $\C$ by $\Phis$.}
$$
\Cs := \nu_s \cup \gamma_{\La^c}.
$$

There exists an $\Event$ satisfying
\begin{equation}
\label{EventOften}
\P\left(\C \in \Event\right) = 1 - \pto, \quad \P\left(\Cs \in \Event\right) = 1 - \pto,
\end{equation}
such that, if $\C \in \Event$
\begin{align*}
\Main(\nu) & = s \pto,\\
 \FluRE(\nu)  & = s \pto, \\
 \int_{\La^c} \ErrorDF(\nu)(x) (d\gamma_{\La^c} - dx)   & = s \pto, \\
 s \int \left(\Gam(x) - \varphi\right) (d \left[\nu_s \cup \gamma_{\La^c}\right] - dx)  & = s \pto, \\
s \int \ErrorLog(x) (d\left[\nu_s \cup \gamma_{\La^c}\right] -dx)  & = s \pto,
\end{align*}
and moreover
\begin{equation}
\label{Ccapll}
|\C \cap (-\l, \l)| \preceq \l^2.
\end{equation}
\end{lem}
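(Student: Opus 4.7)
The plan is to construct $\Event$ as an explicit intersection of high-probability sub-events, each of which controls one of the listed quantities, and then verify the two probability bounds by Markov's inequality together with a union bound. The hard analytic work has already been done: Proposition \ref{prop:mainterms} gives $\Esp|\Main| = s \pto$, Corollary \ref{coro:smallFluRE} gives $\Esp|\FluRE(\nu)| = s \pto$, Corollary \ref{coro:ErrorDFpetit} gives $\Esp|\int_{\La^c} \ErrorDF(\nu)(d\gamma_{\La^c} - dx)| = s \pto$, and Corollaries \ref{coro:GamVPhismall}, \ref{coro:ErrorLogsmall} give $\pto$-bounds for the expectations of the fluctuations of $\Gam - \varphi$ and $\ErrorLog$ against $d\C - dx$. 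Moreover, \eqref{discresA} combined with Chebyshev's inequality yields $|\Discr_{[-\l, \l]}| \preceq \l$ with probability $1 - O(1/\l)$, which gives the crude bound \eqref{Ccapll}.

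For each quantity $X_i$ with $\Esp|X_i| \leq c_i \epsilon_i$ (where $c_i \in \{1, s\}$ and $\epsilon_i = \pto$), I would define the sub-event $\mathsf{A}_i := \{|X_i| \leq c_i \sqrt{\epsilon_i}\}$; by Markov, $\P(\mathsf{A}_i^c) \leq \sqrt{\epsilon_i} = \pto$, while on $\mathsf{A}_i$ one has $|X_i| = c_i \pto$, matching the form required in the conclusion. Taking $\Event$ to be the intersection of these $\mathsf{A}_i$ together with $\{|\Discr_{[-\l, \l]}| \leq \l\}$, a finite union bound yields $\P(\C \in \Event) = 1 - \pto$, which proves the first half of \eqref{EventOften} and produces, by construction, all the listed estimates.

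The main obstacle is the second half of \eqref{EventOften}, namely $\P(\Cs \in \Event) = 1 - \pto$. The key observation is that $\Cs$ differs from $\C$ only through the interior points moved by the transport $\Phis$, which satisfies $|\Phis - \id|_{\0} \leq 1$ (Lemma \ref{lem:transportmap}) and the finer pointwise bounds of Lemma \ref{lem:additionalpropertiespsis}. Consequently, for any smooth test function $g$,
\[
\int g \, (d\Cs - d\C) \;=\; \sum_{p \in \nu} \bigl( g(\Phis(p)) - g(p) \bigr),
\]
which by the mean value theorem is bounded by $\sum_{p \in \nu} |\psis(p)|\, |g|_{\1, V_p}$ and controlled by the discrepancy estimates \eqref{discrestimateA}--\eqref{discrestimateB}. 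Applying this comparison to $g = \Gam - \varphi$ and $g = \ErrorLog$ shows that the expectations defining $\mathsf{A}_i$ for these two functionals only shift by $s \pto$ when $\C$ is replaced by $\Cs$. For the intrinsic quantities $\Main(\nu)$, $\FluRE(\nu)$, and $\int_{\La^c} \ErrorDF(\nu) (d\gamma_{\La^c} - dx)$, the a priori bound of Proposition \ref{prop:aprioribounds} controls each of them by sums of discrepancies $\Dt_i, \DL_i, \DR_i$; since $\Phis$ displaces points by at most $1$, the discrepancies of $\Cs$ differ from those of $\C$ by at most the number of points of $\C$ in a unit neighborhood of each interval endpoint, which is of order $1$ in expectation. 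Reinjecting these bounds into the computations of Proposition \ref{prop:mainterms}, Corollary \ref{coro:smallFluRE}, and Corollary \ref{coro:ErrorDFpetit} shows that the corresponding expectations for $\Cs$ are again $s \pto$. A second union bound then gives $\P(\Cs \in \Event) = 1 - \pto$, concluding the proof.
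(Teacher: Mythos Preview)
Your proposal is correct and follows essentially the same route as the paper: Markov's inequality on the expectation bounds from Proposition~\ref{prop:mainterms} and Corollaries~\ref{coro:smallFluRE}, \ref{coro:ErrorDFpetit}, \ref{coro:GamVPhismall}, \ref{coro:ErrorLogsmall}, together with the crucial observation that $|\Phis-\id|_{\0}\leq 1$ forces $|\Discr_{[a,b]}(\Cs)| \preceq |\Discr_{[a-1,b+1]}(\C)|+1$, so all discrepancy-based estimates transfer from $\C$ to $\Cs$.

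The only organisational difference is how the $\nu_s$-bounds are obtained. You build $\Event$ directly from the five functionals as stated (two of them already involving $\nu_s$) and then separately argue that the expectations for $\Cs$ are again $s\pto$. The paper instead defines an auxiliary event $E$ using only the $\nu$-versions of all five functionals, shows $\P(\Cs\in E)=1-\pto$ via the discrepancy transfer, and then sets $\Event := E\cap\{\Cs\in E\}$; the condition $\C\in\Event$ then automatically contains $\Cs\in E$, which is exactly the statement that the last two fluctuation integrals are small when tested against $d[\nu_s\cup\gamma_{\La^c}]-dx$. This avoids having to compare $\int g\,(d\Cs-d\C)$ for each specific $g$ and sidesteps the awkwardness that, with your definition, checking $\Cs\in\Event$ formally involves $(\nu_s)_s$. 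Both arguments are valid and rest on the same analytic input; the paper's is just a bit more economical.
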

\begin{proof}
The control \eqref{Ccapll} is needed for technical reasons, in order to ensure that the number of poins in $(-\l, \l)$ is bounded. Since the mean number of points is $2\l$, the event \eqref{Ccapll} is of course very likely.

Using Proposition \ref{prop:mainterms}, Corollary \ref{coro:smallFluRE}, Corollary \ref{coro:ErrorDFpetit}, Corollary \ref{coro:GamVPhismall} and Corollary \ref{coro:ErrorLogsmall}, and applying Markov's inequality, we see that there exists an event $E$ of probability $1 - \pto$ on which the three first bounds hold, and moreover
\begin{align*}
s \int \left(\Gam(x) - \varphi\right) (d \left[\nu \cup \gamma_{\La^c}\right] - dx) & = s \pto, \\
s \int \ErrorLog(x) (d\left[\nu \cup \gamma_{\La^c}\right] -dx) & = s \pto.
\end{align*}

Moreover, we argue that
$$
\P\left( \left( \nu_s \cup \gamma_{\La^c} \right) \in E \right) = 1 - \pto.
$$
Indeed we know, by construction, that the transport map $\Phis$ is close to the identity map, with $\Phis - \id$ bounded by $1$, see \eqref{bound:psis0}. So if $\Cs = \nu_s \cup \gamma_{\La^c}$ is the push-forward of $\C = \nu \cup \gamma_{\La^c}$ by $\Phis$, we have for any $x,y \in \R$
$$
|\Discr_{[x, y]}|[\Cs] \preceq |\Discr_{[x-1,y+1]}|(\C) + 1.
$$
Any estimate involving the discrepancies of $\C$ can thus be converted into the estimate on $\Cs$. We then take $\Event$ to be the intersection 
$$
E \cap \left\lbrace \Cs \in E\right\rbrace, 
$$
for which the last two bounds hold as stated.
\end{proof}

\subsection{The DLR equations}
\label{sec:DLRequations}
The DLR formalism for $\sineb$ is a statistical physics representation of the point process as an infinite volume Gibbs measure. Before stating the result of \cite{DLRpreprint} in a convenient fashion for the present paper, we need to introduce some notation. 

\begin{defi}[Infinite volume Gibbs kernel]
Let $\lambda  > 0$, and let $\Lambda : = [-\lambda, \lambda]$. Let $\gamma$ be a point configuration in $\R$, and $\eta$ be a point configuration in $\Lambda$. We aim at defining the energy of the point configuration $\eta \cup \gamma_{\La^c}$  formed by $\eta$ in $\La$ and $\gamma$ in $\La^c := \R \backslash \La$. In fact, we only want to \textit{compare} these energies for a fixed $\gamma$ and a variable $\eta$, so we may work up to (possibly infinite) additive constants, which formally disappear in the comparison. 

The interaction energy of $\eta$ with itself is denoted by $\HLa(\eta)$.\begin{equation}
\label{def:HLa}
\tHLa(\eta) := \hal \iint_{\diagc} - \log |x-y| (d\eta(x) - dx) (d\eta(y) - dy).
\end{equation}

The following quantity encodes the interaction energy of the configuration $\eta$ in $\La$ with the configuration $\gamma$ outside $\La$. In fact, we compute the interaction of $\eta - \gamma$ in $\La$ with $\gamma - dx$ outside $\La$. The first modification only plays the role of a (possibly infinite) additive constant (for fixed $\gamma$), and the second modification is technical.
\begin{equation}
\label{def:tMove}
\tMove(\eta, \gamma) := \lim_{p \to \infty} \int_{x \in ([-p,p] \backslash \La)} \int_{y \in \Lambda}  - \log |x-y| (d\eta(y) - d\gamma_{\La}(y)) (d\gamma(x) - dx).
\end{equation}

We denote Bernoulli point processes by $\B$. In particular, $\B_{|\gamma_\La|, \La}$ is the law of the Bernoulli point process with $|\gamma_{\La}|$ points in $\La$, i.e. the law of a random point configuration made of $|\gamma_{\La}|$ points drawn uniformly and independently in $\La$.

We may now form the Boltzmann factor associated to the sum of these energies, given by
$$
\exp \left( - \beta \left(\tHLa(\eta) + \tMove(\gamma, \eta) \right) \right),
$$
and the associated partition function
\begin{equation}
\label{def:ZLagamma}
  Z_{\La, \beta}(\gamma) := \int \exp \left(- \beta\left( \tHLa(\eta) + \tMove(\eta, \gamma) \right) \right)  d\B_{|\gamma_\La|, \La}(\eta).
 \end{equation}
 
Finally, for $\gamma$ fixed, we define $\GLa(\eta; \gamma)$ as a probability measure on random point configurations $\eta$ in $\La$ given by:
 \begin{equation}
  \label{def:GLa}
d\GLa(\eta ; \gamma) := \frac{1}{ Z_{\La, \beta}( \gamma)} \exp \left(- \beta\left( \tHLa(\eta) + \tMove(\eta, \gamma) \right) \right)  d\B_{|\gamma_\La|, \La}(\eta).
\end{equation}

\end{defi}

The following is a re-writing of the main result in \cite{DLRpreprint}.
\begin{prop}[DLR equations for $\sineb$]
\label{prop:DLRequations}
Let $f$ be a bounded, measurable function on the space of point configurations, and $\lambda > 0$, we have
\begin{equation}
\label{DLR}
\Esp[f] = \int d\sineb(\gamma) \int f(\eta \cup \gamma_{\Lambda^c}) d\GLa(\eta;  \gamma). 
\end{equation}
\end{prop}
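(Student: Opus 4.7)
The plan is to deduce Proposition \ref{prop:DLRequations} from the DLR equations established in \cite{DLRpreprint}, essentially by matching the two formulations and checking that the quantities $\tHLa$, $\tMove$, $Z_{\La, \beta}$ introduced here are well-defined $\sineb$-almost surely. Reproving the DLR property from scratch is the content of \cite{DLRpreprint} and is not attempted; the present task is bookkeeping plus a convergence argument for the cross-interaction.

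First I would verify that the limit defining $\tMove(\eta, \gamma)$ in \eqref{def:tMove} exists for $\sineb$-a.e. $\gamma$ and every $\eta$ with $|\eta| = |\gamma_\La|$. The inner integral is a signed measure of zero total mass on $\La$, since both $\eta$ and $\gamma_\La$ contribute $|\gamma_\La|$ points. Thus for $|x|$ large,
\begin{equation*}
\int_{\La} - \log|x-y| \bigl(d\eta(y) - d\gamma_{\La}(y)\bigr) = \sum_{k \geq 1} \frac{1}{k x^k} \int_{\La} y^k \bigl(d\eta(y) - d\gamma_{\La}(y)\bigr) = O(1/x),
\end{equation*}
with a constant controlled by $\lambda$ and $|\gamma_\La|$. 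Integrating this $O(1/x)$ quantity against the signed outer measure $d\gamma(x) - dx$ on $\{p_1 \leq |x| \leq p_2\}$, an Abel-type summation together with the discrepancy bounds \eqref{discresA} and \eqref{discrasym} shows that the tail as $p_1, p_2 \to \infty$ vanishes. Hence the limit in \eqref{def:tMove} exists and is finite almost surely.

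Second, I would check that $Z_{\La, \beta}(\gamma)$ is finite and positive for $\sineb$-a.e.\ $\gamma$. The diagonal energy $\tHLa(\eta)$ is bounded below on configurations with $|\gamma_\La|$ points in $\La$ by a standard 1D log-gas estimate (logarithmic singularities are locally integrable against $\B_{|\gamma_\La|, \La}$, which gives a Selberg-type integral), and the cross term $\exp(-\beta \tMove)$ is controlled uniformly by the bounds above; positivity is clear since the integrand is strictly positive on a set of full Bernoulli measure.

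Third, I would match the two formulations of the Gibbs kernel. The renormalizations in \eqref{def:HLa} (subtracting $dx$ on the diagonal) and in \eqref{def:tMove} (subtracting $d\gamma_{\La}$ inside $\La$ and $dx$ outside) each add to the raw energy a $(\gamma, |\gamma_\La|)$-measurable, possibly infinite, constant which cancels between numerator and denominator in the definition \eqref{def:GLa} of $\GLa$. The use of $\B_{|\gamma_\La|, \La}$ as reference measure is legitimized by the rigidity of $\sineb$ established in \cite{DLRpreprint, chhaibi2018rigidity}: conditionally on $\gamma_{\La^c}$, the cardinality $|\gamma_\La|$ is almost surely prescribed, so the regular conditional law of $\C \cap \La$ is supported on $\{|\eta| = |\gamma_\La|\}$ and is absolutely continuous with respect to the product Lebesgue measure there, i.e.\ with respect to $\B_{|\gamma_\La|, \La}$.

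The main obstacle is really the convergence of \eqref{def:tMove}: once one has a robust control of the long-range contribution using the $o(R)$ variance of discrepancies, the rest is identification of definitions. The identity \eqref{DLR} then follows by applying the DLR property of \cite{DLRpreprint} to the bounded measurable function $f$, noting that both sides only depend on $\gamma$ through the pair $(\gamma_{\La^c}, |\gamma_\La|)$ of the outside configuration and the number of inside points, which the renormalized Gibbs kernel $\GLa(\,\cdot\,;\gamma)$ encodes exactly.
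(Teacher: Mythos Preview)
Your proposal is correct and follows the same approach as the paper: deduce the statement from the DLR equations of \cite{DLRpreprint} by matching the two formulations of the Gibbs kernel and checking that the differences are additive constants in $\eta$ that cancel in \eqref{def:GLa}.

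The paper's proof is considerably more direct than yours, however. Rather than separately verifying convergence of $\tMove$ and finiteness of $Z_{\La,\beta}$ (which are already implicit in \cite{DLRpreprint}), the paper simply writes down the original quantities $\HLa(\eta) = \tfrac{1}{2}\iint_{\diagc}-\log|x-y|\,d\eta\,d\eta$ and $\Move(\eta,\gamma) = \lim_p \int_{[-p,p]\setminus\La}\int_\La -\log|x-y|(d\eta-d\gamma_\La)\,d\gamma$ used in \cite{DLRpreprint}, and checks explicitly that $\tHLa-\HLa$ and $\tMove-\Move$ differ by an $\eta$-independent additive constant plus the single cross term $\lim_{p\to\infty}\int_{[-p,p]}\int_\La -\log|x-y|(d\eta-d\gamma)\,dy$, which vanishes because $d\eta-d\gamma_\La$ has zero mass on $\La$. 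Your invocation of rigidity to justify the Bernoulli reference measure $\B_{|\gamma_\La|,\La}$ is also a detour: that reference measure is already part of the formulation in \cite{DLRpreprint}, not something requiring separate justification here.
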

\begin{proof}
The only difference with \cite{DLRpreprint} is that we chose here to include the background in the definition of the energy. The result of \cite{DLRpreprint} is stated with $\HLa$ and $\Move$ instead of $\tHLa$ and $\tMove$ respectively, where
$$
\HLa(\eta) := \hal \iint_{\diagc} - \log |x-y| d\eta(x) d\eta(y),
$$
$$
\Move(\gamma, \eta) := \lim_{p \to \infty} \int_{x \in ([-p,p] \backslash \La)} \int_{y \in \Lambda}  - \log |x-y| (d\eta(y) - d\gamma_{\La}(y)) d\gamma(x).
$$
It is easy to check that the difference between these two formulations is an additive constant (for fixed $\La, \gamma$), which is absorbed by the partition function, plus the term
$$
\lim_{p \to \infty} \int_{y \in [-p, p]} \int_{x \in \La} - \log |x-y| (d\eta(x) - d\gamma(x)) dy,
$$
which is almost surely zero.
\end{proof}

\subsection{The Laplace transform of the fluctuations}
\def \Lap{\mathcal{L}_{\varphi, \l, \lambda}}
\def \Lapp{\widetilde{\mathcal{L}}_{\varphi, \l, \lambda}}
We introduce the function $\Lap$
$$
t \mapsto \Lap(t) := \Esp \left[ \exp\left(s \Fluct[\varphi](\C) \1_{\Event}(\C)\right) \right],
$$
which is the Laplace transform of the fluctuations of $\varphi$, up to the indicator function $\1_{\Event}$, and, by construction, $\Event$ is very likely.

Using $\P(\Event) = 1 - \pto$ as stated in \eqref{EventOften}, we may of course re-write $\Lap$ as
$$
\Lap(t) = \Esp \left[ \exp\left(t \Fluct[\varphi](\C) \right) \1_{\Event}(\C) \right] + \pto,
$$
and we now focus on the first term in the right-hand side, that we denote by
\begin{equation}
\label{def:Lapp}
\Lapp(t) := \Esp \left[ \exp\left(t \Fluct[\varphi](\C) \right) \1_{\Event}(\C) \right].
\end{equation}

The map $\C \mapsto \exp\left(t \Fluct[\varphi](\C) \right) \1_{\Event}(\C)$ is bounded, because by construction, on $\Event$, the number of points of $\C$ in the support of $\varphi$ is bounded, see \eqref{Ccapll}. Using DLR equations \eqref{DLR}, we write
\begin{multline}
\label{Lappv2}
\Lapp(t) = \int d\sineb(\gamma) \frac{1}{Z_{\La, \beta}(\gamma)} \\
\times \int \exp\left(t \Fluct[\varphi](\eta) \right) \1_{\Event}\left(\eta \cup \gamma_{\La^c}\right)   \exp \left(- \beta\left( \tHLa(\eta) + \tMove(\eta, \gamma) \right) \right)  d\B_{|\gamma_\La|, \La}(\eta),
\end{multline}
where we have used the fact that, since $\varphi$ is supported inside $\La$, we may write 
$$
\Fluct[\varphi](\eta \cup \gamma_{\La^c}) = \Fluct[\varphi](\eta).
$$
Combining both exponential terms and using the definition \eqref{def:HLa} of $\tHLa$, we obtain, in the exponent
$$
-\beta \left( \hal  \left( \iint_{\diagc} - \log |x-y| (d\eta(x) - dx) (d\eta(y) - dy) - \frac{2t}{\beta} \Fluct[\varphi](\eta) \right) + \tMove(\eta, \gamma) \right),
$$
and we let 
\begin{equation}
\label{def:s}
s := \frac{t}{\beta}.
\end{equation}

\subsection{Laplace transform I. Energy splitting}
In view of the “energy splitting” identity stated in Lemma \ref{lem:energyplusfluct}, we may write
\begin{multline*}
\iint_{\diagc} - \log |x-y| (d\eta(x) - dx)(d\eta(y) - dy) - 2s \Fluct[\varphi](\eta) \\
=
\iint_{\diagc} - \log |x-y| (d\eta(x) - d\tmsL(x))(d\eta(y) - d\tmsL(y)) \\
+ 2 s \int_{\Lambda} \left(\Gam(x) - \varphi\right) (d\eta - dx) + 2s \int_{\Lambda} \ErrorLog(x) (d\eta -dx) \\
- 2 s^2 \ophiH^2 - s^2 \ErrorVar.
\end{multline*}
The term $\ErrorVar$ is bounded as in \eqref{erreur:variance}, hence we have
\begin{multline}
\label{exponentdevelop}
\exp\left(t \Fluct[\varphi](\eta) \right)  \exp \left(- \beta\left( \tHLa(\eta) + \tMove(\eta, \gamma) \right) \right) \1_{\Event}(\eta \cup \gamma_{\La^c}) \\
= 
\exp \left( - \beta \left( \hal \iint_{\diagc} - \log |x-y| (d\eta(x) - d\tmsL(x))(d\eta(y) - d\tmsL(y)) \right) \right)\\
\times \exp\left( - \beta \left(  s \int_{\Lambda} \left(\Gam(x) - \varphi\right) (d\eta - dx)  + s \int_{\Lambda} \ErrorLog(x) (d\eta -dx) + \tMove(\eta, \gamma) \right) \right) \\
\times \exp\left(  s^2 \beta \ophiH^2 \right)
\times \exp\left( s \pto + s^2 \pto \right) \1_{\Event}\left(\eta \cup \gamma_{\La^c}\right).
\end{multline}
Inserting this expansion into \eqref{Lappv2}, we obtain
\begin{multline}
\label{Lappv3}
\Lapp(t) = \exp\left( \frac{s^2}{2} \left(2\beta \ophiH^2\right) + s o_{\l, \lambda}(1) + s^2 o_{\l, \lambda}(1) \right) \int d\sineb(\gamma) \frac{1}{Z_{\La, \beta}(\gamma)} \\
\times \int \exp \left( - \beta \left( \hal \iint_{\diagc} - \log |x-y| (d\eta(x) - d\tmsL(x))(d\eta(y) - d\tmsL(y)) \right) \right)\\
\times \exp\left( - \beta \left(  s \int_{\Lambda} \left(\Gam(x) - \varphi\right) (d\eta - dx)  + s \int_{\Lambda} \ErrorLog(x) (d\eta -dx) + \tMove(\eta, \gamma) \right) \right) \\
\times  \1_{\Event}\left(\eta \cup \gamma_{\La^c}\right) d\B_{|\gamma_\La|, \La}(\eta).
\end{multline}
\def \hPhi{\hat{\Phi}_s}

\subsection{Laplace transform II. Change of variables}
We now perform a change of variables on $\eta$. For $N$ fixed, we may consider the map
$\hPhi : \La^{N} \to \La^{N} $ given by
$$
\hPhi(x_1, \dots, x_N) := (\Phis(x_1), \dots, \Phis(x_N)),
$$
where $\Phis$ is the transport map from the constant density to $\tmsL$. Since $\Phis$ is a bijection, so is $\hPhi$. We let $\nu = \hPhi^{-1}(\eta)$, so that 
$\eta = \hPhi(\nu)$ is the push-forward of $\nu$ by $\Phis$, that we will now denote by $\nu_s$. The innermost integral in \eqref{Lappv3} becomes
\begin{multline}
\label{innermost1}
\int \exp \left( - \beta \left( \hal \iint_{\diagc} - \log |x-y| (d\nu_s(x) - d\tmsL(x))(d\nu_s(y) - d\tmsL(y)) \right) \right)\\
\times \exp\left( - \beta \left(  s \int_{\Lambda} \left(\Gam(x) - \varphi\right) (d\nu_s - dx)  + s \int_{\Lambda} \ErrorLog(x) (d\nu_s -dx) + \tMove(\nu_s, \gamma) \right) \right) \\
\times \exp\left(\int \log \Phis'(x) d\nu(x) \right) \1_{\Event}\left(\nu_s \cup \gamma_{\La^c}\right) d\B_{|\gamma_\La|, \La}(\nu),
\end{multline}
where the term $\exp\left(\int \log \Phis'(x) d\nu(x)\right)$ is the Jacobian of the transformation. 

In view of \eqref{intphisprime}, we have
$$
\int \log \Phis'(x) d\nu(x) = \RE + \FluRE(\nu),
$$
we know from \eqref{smallRE} that $\RE = s^2 \pto$, and we know from Lemma \ref{lem:Event} that
$$
\nu_s \cup \gamma_{\La^c} \in \Event \implies \FluRE(\nu) = s \pto,
$$
hence the Jacobian only contributes to an error term $\exp(s\pto + s^2 \pto)$.

\subsection{Laplace transform III. The interior-interior energy}
Using Lemma \ref{lem:energycomparison}, we have
\begin{multline*}
\iint_{\diagc} - \log |x-y| (d\nu_s(x) - d\tmsL(x))(d\nu_s(y) - d\tmsL(y)) \\
= \iint_{\diagc} - \log |x-y| (d\nu(x) - dx)(d\nu(y) - dy) + \Main(\nu) + \RE + \FluRE(\nu).
\end{multline*}
We know from \eqref{smallRE} that $\RE = s^2 \pto$, and we know from Lemma \ref{lem:Event} that
$$
\nu_s \cup \gamma_{\La^c} \in \Event \implies \FluRE(\nu) = s \pto, \ \Main(\nu) = s \pto.
$$
We may thus write \eqref{innermost1} as 
\begin{multline}
\label{innermost2}
\int \exp \left( - \beta \left( \hal \iint_{\diagc} - \log |x-y| (d\nu(x) - dx)(d\nu(y) - dy) \right) \right)\\
\times \exp\left( - \beta \left(  s \int_{\Lambda} \left(\Gam(x) - \varphi\right) (d\nu_s - dx)  + s \int_{\Lambda} \ErrorLog(x) (d\nu_s -dx) + \tMove(\nu_s, \gamma) \right) \right) \\
\times \exp\left(s \pto + s^2 \pto \right) \1_{\Event}\left(\nu_s \cup \gamma_{\La^c}\right) d\B_{|\gamma_\La|, \La}(\nu).
\end{multline}

\subsection{Laplace transform IV. The interior-exterior energy}
Let us recall that $\tMove$ is defined in \eqref{def:tMove} by
$$
\tMove(\eta, \gamma) := \lim_{p \to \infty} \int_{x \in ([-p,p] \backslash \La)} \int_{y \in \Lambda}  - \log |x-y| (d\eta(y) - d\gamma_{\La}(y)) (d\gamma(x) - dx).
$$
A direct computation shows that
\begin{equation}
\tMove(\nu_s, \gamma) = \tMove(\nu, \gamma) + \int_{\La^c} \DF(\nu)(x) (d\gamma - dx),
\end{equation}
where $\DF(\nu)$ is the difference field generated by $\nu_s - \nu$ as in \eqref{def:Es}. Using the decomposition 
$$
\DF(\nu) = s\Gam(x) + s\ErrorLog(x) + \ErrorDF(\nu)(x), 
$$
as in \eqref{Eps1}, we may write
\begin{multline}
\label{tMovedecompo}
\tMove(\nu_s, \gamma) = \tMove(\nu, \gamma) + s \int_{\La^c} \Gam(x) (d\gamma - dx) \\
+ s \int_{\La^c} \ErrorLog(x)  (d\gamma - dx) + \int_{\La^c} \ErrorDF(\nu)(x) (d\gamma - dx),
\end{multline}
so in particular, the middle line in \eqref{innermost2} reads as
\begin{multline}
s \int_{\Lambda} \left(\Gam(x) - \varphi\right) (d\nu_s - dx)  + s \int_{\Lambda} \ErrorLog(x) (d\eta -dx) + \tMove(\nu_s, \gamma) \\
= \tMove(\nu, \gamma) + 
s \int \left(\Gam(x) - \varphi\right) (d \left[\nu_s \cup \gamma_{\La^c}\right] - dx)  + s \int \ErrorLog(x) (d\left[\nu_s \cup \gamma_{\La^c}\right] -dx)  \\+ \int_{\La^c} \ErrorDF(\nu)(x) (d\gamma - dx).
\end{multline}

We know from Lemma \ref{lem:Event} that
$$
\nu_s \cup \gamma_{\La^c} \in \Event \implies 
\left\lbrace \begin{array}{ccc}
\displaystyle{\int_{\La^c} \ErrorDF(\nu)(x) (d\gamma - dx)} & = & s \pto \\
\displaystyle{s \int \left(\Gam(x) - \varphi\right) (d \left[\nu_s \cup \gamma_{\La^c}\right] - dx)} & = & s \pto \\
\displaystyle{s \int \ErrorLog(x) (d\left[\nu_s \cup \gamma_{\La^c}\right] -dx)} & = & s \pto
\end{array}\right.,
$$
so we may re-write \eqref{innermost2} as
\begin{multline}
\label{innermost3}
\int \exp \left( - \beta \left( \hal \iint_{\diagc} - \log |x-y| (d\nu(x) - dx)(d\nu(y) - dy) \right) \right) \times \exp\left( - \beta \left( \tMove(\nu, \gamma) \right) \right) \\
\times \exp\left(s \pto + s^2 \pto \right) \1_{\Event}\left(\nu_s \cup \gamma_{\La^c}\right) d\B_{|\gamma_\La|, \La}(\nu).
\end{multline}

\subsection{Conclusion}
Using \eqref{innermost3} recognizing $\tHLa(\nu)$ in the first exponent (as defined in \eqref{def:HLa}, and coming back to the expression \eqref{Lappv3} of $\Lapp(t)$, we get
\begin{multline}
\label{Lappv4}
\Lapp(t) = \exp\left( \frac{s^2}{2} \left(2\beta \ophiH^2\right) + s o_{\l, \lambda}(1) + s^2 o_{\l, \lambda}(1) \right) \int d\sineb(\gamma) \frac{1}{Z_{\La, \beta}(\gamma)} \\
\times \int \exp \left( - \beta \left(\tHLa(\nu) + \tMove(\nu, \gamma) \right) \right) \times  \1_{\Event}\left(\nu_s \cup \gamma_{\La^c}\right) d\B_{|\gamma_\La|, \La}(\nu).
\end{multline}

By the DLR equations \eqref{DLR}, we may write
\begin{multline*}
\int d\sineb(\gamma) \frac{1}{Z_{\La, \beta}(\gamma)} \\
\times \int \exp \left( - \beta \left(\tHLa(\nu) + \tMove(\nu, \gamma) \right) \right) \times  \1_{\Event}\left(\nu_s \cup \gamma_{\La^c}\right) d\B_{|\gamma_\La|, \La}(\nu) = \Esp \left[ \1_{\nu_s \cup \gamma_{\La^c} \in \Event} \right] \\
 = \P \left[ \left\lbrace\nu_s \cup \gamma_{\La^c} \in \Event \right\lbrace \right],
\end{multline*}
and by Lemma \ref{lem:Event} this quantity is $1 - \pto$. Doing a final replacement of $s$ by $\frac{t}{\beta}$, we obtain
\begin{equation}
\label{Lappv5}
\Lapp(t) = \exp\left( \frac{t^2}{2} \times \frac{2}{\beta} \ophiH^2 +  t o_{\l, \lambda}(1) + t^2 o_{\l, \lambda}(1) \right) ( 1 - \pto).
\end{equation}

In particular, for $t$ such that $\frac{|t|}{\beta} \leq \smax$ as in \eqref{def:smax}, we get, uniformly in $t$,
\begin{equation}
\label{Lappv6}
\lim_{\lambda \to \infty, \l \to \infty} \Lap(t) =  \exp \left( \frac{t^2}{2} \times \frac{2}{\beta} \ophiH^2 \right),
\end{equation}

We have thus obtained that, sending $\lambda \to \infty$ then $\l \to \infty$, the Laplace transform of the random variable
$$
\Fluct[\varphi](\C) \1_{\Event}(\C)
$$
converges to 
$$
t \mapsto \exp\left( \frac{t^2}{2} \times \frac{2}{\beta}\ophiH^2  \right),
$$
which is the Laplace transform of a centered Gaussian variable with variance $ \frac{2}{\beta} \ophiH^2$. The convergence is uniform for values of the parameter in some open interval around $0$. It is well-known that this convergence implies convergence in law. Moreover, since we know by \eqref{EventOften} that $\P(\Event) = 1 - \pto$, the convergence in law of $\Fluct[\varphi](\C) \1_{\Event}(\C)$ implies the convergence in law of the fluctuations themselves. This concludes the proof of the central limit theorem.

\begin{remark}[Lack of moderate deviations bounds]
Since $|\muL|_{\0} \preceq \frac{1}{\l}$, taking $s$ of order as large as $\l$ still guarantees that $\tmsL$ will be a positive density. The transport map $\Phis$ may now move points at a distance $\Oun(\l)$, but in fact this is harmless because a careful inspection reveals that our estimates are insensitive to a displacement of the points of order $\l$. Taking $s$ large is tempting because it yields a control on the Laplace transform of the fluctuations for large values of the parameter, which in turn implies strong concentration bounds with exponential (in $\l$) tails. However, our argument relies on the discrepancy estimate \eqref{discrasym}, which is not quantitative and raises an obstacle for obtaining such moderate deviations bounds on the fluctuations.
\end{remark}

\section{Auxiliary proofs}
\label{sec:auxiliary}
\subsection{Proof of Proposition \ref{prop:aprioribounds}}
\label{sec:proofaprioribounds}
\begin{proof}[Proof of Proposition \ref{prop:aprioribounds}]
We write:
$$
\int  g(x) (d\C - dx) = \sum_{k=-\infty}^{\infty} \int_{k}^{k+1} g(x) (d\C - dx).
$$
Since $g$ is assumed to be compactly supported, all the sums are finite. On $[k, k+1]$ we may write, using the mean value theorem, $g(x) = g(k) + \Oun\left(|g|_{\1,\Vk}\right)$, and we obtain
$$
\int_{k}^{k+1} g(x) (d\C - dx) = g(k) \Discr_{[k, k+1]} + \Oun\left(|g|_{\1,\Vk}\right)\left(1 + |\Discr_{[k, k+1]}|\right).
$$
We have of course
\begin{equation}
\label{ofcourse}
\Discr_{[k, k+1]} = \Discr_{[0, k+1]} - \Discr_{[0, k]}, 
\end{equation}
so a summation by parts yields
$$
\sum_{k=-\infty}^{\infty} g(k) \Discr_{[k, k+1]} = \sum_{k = -\infty}^{\infty} \left(g(k-1) - g(k) \right) \Discr_{[0, k]}.
$$
Using the mean value theorem again, we get 
$$
\left|\sum_{k = -\infty}^{\infty} \left(g(k-1) - g(k) \right) \Discr_{[0, k]}\right| \leq \sum_{k = -\infty}^{\infty} |g|_{\1, \Vk} |\Discr_{[0, k]}|.
$$
We have thus obtained
$$
\int  g(x) (d\C - dx) \preceq \sum_{k=-\infty}^{\infty} |g|_{\1, \Vk} \left(|\Discr_{[0, k]}| + |\Discr_{[k, k+1]}| +1 \right),
$$
which yields the result. 

Finally, if $\lambda$ is fixed, we could choose to write, instead of \eqref{ofcourse}
$$
\Discr_{[k, k+1]} = \Discr_{[-\lambda, k+1]} - \Discr_{[-\lambda, k]},  \quad  \Discr_{[k, k+1]} = \Discr_{[k+1, \lambda]} - \Discr_{[k, \lambda]}
$$
so we can replace $\Dt$ by $\DL$ or $\DR$ as claimed.
\end{proof}

\subsection{Proof of Lemma \ref{lem:propofperturb}}
\label{sec:proofpropofperturb}
\begin{proof}[Proof of Lemma \ref{lem:propofperturb}] 
It is easy to check that $\HLP$ is bounded, and $x \mapsto \frac{1}{\sqrt{\lambda^2-x^2}}$ is integrable, thus so is $\mulambda$. Moreover, for any $x$ in $(-\lambda, \lambda)$, the map
$$
y \mapsto \log |x-y| \frac{1}{\sqrt{\lambda^2-x^2}} 
$$
is also integrable, hence the logarithmic potential is well-defined. 

The fact that $\mulambda$ has total mass $0$ follows from the well-known identity
$$
\PV \int \frac{1}{\sqrt{\lambda^2-x^2}} \frac{1}{t-x} dx = 0 \text{ for $t$ in $(-\lambda, \lambda)$,}
$$
which can be proven by elementary means, see e.g. \cite[Sec. 4.3, eq. (7)]{MR0094665}.

The fact that the logarithmic potential satisfies \eqref{hmuDSE} is a also a well-known result, which can be obtained by integrating the identity
\begin{equation}
\label{airfoil}
\PV \int \frac{\mu(t)}{t-x} dt = \varphi'(x),
\end{equation}
valid for any $x$ in $(-\lambda, \lambda)$. This is known as the \textit{airfoil equation} and we refer again to \cite[Sec.~4.3,~eq.~(12)]{MR0094665}.
\end{proof}

\subsection{Proof of Lemma \ref{lem:boundsonmulambda}}
\label{sec:proofboundsonmu}
We start by the following bounds concerning $\HLP$.
\begin{lem}[Bounds on $\HLP$ and its derivatives]
\label{lem:boundsonHLP} 
We have
\begin{align}
\label{HLP0}
& \HLP(x) \preceq
\begin{cases}
 \frac{\lambda}{\l} & |x| \leq 2\l, \\
\frac{\lambda \l}{x^2} & |x| \geq 2\l.
\end{cases} \\
\label{HLP1}
& \HLP^{(\1)}(x) \preceq
\begin{cases}
 \frac{\lambda}{\l^2} & |x| \leq 2\l, \\
\frac{\lambda \l}{x^3} & |x| \geq 2\l.
\end{cases} \\
\label{HLP2}
& \HLP^{(\2)}(x) \preceq
\begin{cases}
 \frac{\lambda}{\l^3} & |x| \leq 2\l, \\
\frac{\lambda \l}{x^4} & |x| \geq 2\l.
\end{cases}
\end{align}
with implicit constants depending on $\ophi$.
\end{lem}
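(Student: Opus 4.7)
The plan is to treat the two regimes $|x|\leq 2\l$ and $|x|\geq 2\l$ separately, exploiting the fact that $\phiL(t) = \sqrt{\lambda^2-t^2}\,\varphi'(t)$ is supported in $[-\l,\l]$, where $\sqrt{\lambda^2-t^2}\approx\lambda$ since $\l\ll\lambda$. The preparatory estimate is that for every $\k$ the function $\phiL^{(\k)}$ remains supported in $[-\l,\l]$ and satisfies $|\phiL|_{\k}\preceq \lambda/\l^{\k+1}$: this follows from the Leibniz rule applied to the product $\sqrt{\lambda^2-t^2}\cdot\varphi'$, from the rescaling identities \eqref{bounds:rescale} giving $|\varphi|_{j}\preceq 1/\l^{j}$, and from the elementary bound $|\partial_t^{j}\sqrt{\lambda^2-t^2}|\preceq \lambda^{1-j}$ on $[-\l,\l]$. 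Next, the derivative bounds \eqref{HLP1} and \eqref{HLP2} will be obtained from the $\k = \0$ case via the commutation
\begin{equation*}
\partial_x \,\PV \int \frac{\phiL(t)}{t-x}\,dt \;=\; \PV \int \frac{\phiL'(t)}{t-x}\,dt,
\end{equation*}
which is justified by integration by parts in $t$, the boundary terms at infinity vanishing by compact support. It is therefore enough to prove the $\k=\0$ bound and then replace $\phiL$ by $\phiL^{(\k)}$ to obtain the higher-derivative versions.

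For the inner regime $|x|\leq 2\l$ I would use a Plemelj-type splitting,
\begin{equation*}
\PV \int \frac{\phiL(t)}{t-x}\,dt \;=\; \int_{-\l}^{\l}\frac{\phiL(t)-\phiL(x)}{t-x}\,dt \;+\; \phiL(x)\,\log\frac{|\l-x|}{|\l+x|}\qquad(|x|<\l),
\end{equation*}
and for $\l\leq |x|\leq 2\l$ I would keep the ordinary (non-PV) integral $\int_{-\l}^{\l}\phiL(t)/(t-x)\,dt$. The regular part is controlled pointwise by $|\phiL|_{\1}\preceq \lambda/\l^2$ and so contributes at most $2\l\cdot \lambda/\l^2 = \lambda/\l$. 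The boundary term $\phiL(x)\log(|\l-x|/|\l+x|)$ is dominated by $|\phiL|_{\0}\preceq \lambda/\l$ when $|x|\leq \l/2$; when $|x|$ is close to $\l$, the vanishing of $\varphi'$ at $\pm\l$ (a consequence of $\ophi\in C^4_c(-1,1)$) yields $|\phiL(x)|\preceq \lambda(\l-|x|)_{+}/\l^2$, and since $u\log(\l/u)$ is bounded by a multiple of $\l$ on $(0,\l)$, the total is again $\preceq \lambda/\l$. Repeating the argument with $\phiL^{(\k)}$ in place of $\phiL$ yields the inner halves of \eqref{HLP1}--\eqref{HLP2}.

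For the outer regime $|x|\geq 2\l$, the principal value reduces to a standard integral because $|t-x|\geq |x|/2$ for all $t\in[-\l,\l]$. Integrating by parts in $t$ to move the derivative off $\varphi$ gives
\begin{equation*}
\HLP(x) \;=\; \frac{1}{\pi}\int_{-\l}^{\l}\left[\frac{t\,\varphi(t)}{(t-x)\sqrt{\lambda^2-t^2}} + \frac{\sqrt{\lambda^2-t^2}\,\varphi(t)}{(t-x)^2}\right]dt,
\end{equation*}
the boundary contributions vanishing because $\varphi$ is compactly supported in $(-\l,\l)$. Using $\sqrt{\lambda^2-t^2}\preceq \lambda$, $|t|\leq \l$, $\|\varphi\|_{L^1}\preceq \l$, and $|t-x|\asymp |x|$, the second integrand dominates and contributes $\preceq \lambda\l/x^2$, with the first smaller by a factor $\l/\lambda$. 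The outer halves of \eqref{HLP1} and \eqref{HLP2} follow by differentiating directly under the integral sign; each $\partial_x$ pulls down an extra $1/(t-x)$, producing the claimed $1/|x|^{\k+2}$ decay, while $\sqrt{\lambda^2-t^2}\,\|\varphi\|_{L^1}\preceq \lambda\l$ controls the numerator.

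The main technical obstacle I expect is the narrow transition strip $\l\leq |x|\leq 2\l$: the outer estimate loses its uniform lower bound on $|t-x|$, while the inner Plemelj boundary term trivially vanishes but the kernel $1/(t-x)$ becomes nearly singular. Both issues are resolved by the first-order vanishing of $\phiL$ at $\pm\l$: using $|\phiL(t)|\preceq |\phiL|_{\1}\cdot(\l-|t|)$ on $[-\l,\l]$ turns $\int_{-\l}^{\l}|\phiL(t)|/|t-x|\,dt$ into an integral of the form $\int_{-\l}^{\l}(\l-|t|)/|t-x|\,dt$, which an elementary substitution shows is bounded by a multiple of $\l$ uniformly as $|x|$ approaches $\l$ from above. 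The same argument carries over to $\phiL^{(\k)}$, which vanishes at $\pm\l$ for $\k = \0,\1,\2$ thanks to the $C^4$ compact support of $\ophi$.
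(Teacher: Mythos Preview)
Your argument is correct and covers all the necessary regimes, but it follows a different path from the paper in both the inner and outer zones.

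\textbf{Inner regime $|x|\le 2\l$.} The paper does not use the Plemelj splitting. Instead it proves a single estimate
\[
\PV\int \frac{g(t)}{t-x}\,dt \preceq \l^{1/2}\|g'\|_{L^2}
\]
for any $C^1$ function $g$ supported in $(-\l,\l)$, obtained by writing the PV as $\int_0^\infty (g(x+u)-g(x-u))/u\,du$, inserting $g(x+u)-g(x-u)=\int_{x-u}^{x+u}g'$, applying Fubini, and then Cauchy--Schwarz in $v$. This bound is applied to $g=\phiL,\phiL',\phiL''$, after which the explicit formulas for $\phiL^{(\k)}$ are used to compute $\|\phiL^{(\k+1)}\|_{L^2}$. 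Your Plemelj argument gives the sharper pointwise bound $\l\,|\phiL^{(\k)}|_{\1}$ plus a boundary term, at the cost of having to track the first-order vanishing of $\phiL^{(\k)}$ at $\pm\l$ separately; the paper's $L^2$ estimate is blind to boundary behaviour and handles all three derivatives uniformly. Both routes consume the $C^4$ regularity in the same place (through $\phiL^{(\3)}$).

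\textbf{Outer regime $|x|\ge 2\l$.} Here the difference is more pronounced. The paper keeps the form $\HLP^{(\k)}(x)=\frac1\pi\int\phiL^{(\k)}(t)/(t-x)\,dt$, expands $1/(t-x)$ and $\sqrt{\lambda^2-t^2}$ in powers of $t/x$ and $t/\lambda$, and exploits successive moment cancellations ($\int\varphi'=0$, $\int t\varphi''=0$, $\int t^2\varphi'''=0$) to kill the leading terms and reach the stated decay. Your approach---integrate by parts once in $t$ to replace $\varphi'$ by $\varphi$ and pick up an extra factor $1/(t-x)$, then differentiate in $x$ under the integral---is cleaner: it produces the $1/|x|^{\k+2}$ decay mechanically and the numerator is controlled by $\lambda\|\varphi\|_{L^1}\preceq\lambda\l$ without any cancellation argument. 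Note that both your bound and the paper's silently carry a subdominant term of order $\l^2/(\lambda|x|)$, which is absorbed by $\lambda\l/|x|^2$ only for $|x|\preceq\lambda^2/\l$; this is harmless since $\HLP$ is only ever evaluated on $(-\lambda,\lambda)$.
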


\begin{proof}[Proof of Lemma \ref{lem:boundsonHLP}] 
We start with the following claim.
\begin{claim}
Let $g$ be a test function of class $C^1$, supported on $(-\l, \l)$. Then for any $x$ such that $|x| \leq 2\l$ we have:
\begin{equation}
\label{bound:CauchyPVLp}
\PV \int \frac{g(t)}{t-x} dt \preceq  \l^{1/2} \|g'\|_{L^{2}}.
\end{equation}
\end{claim}
\begin{proof}[Proof of the claim.]
Let $x$ be such that $|x| \leq 2\l$. Let us use the definition \eqref{def:CauchyPV} of the Cauchy principal value, and write
\begin{equation*}
\PV \int \frac{g(t)}{t-x} dt = \int_{0}^{+ \infty} \frac{g(x+u) -g(x-u)}{u} du = \int_{u \in I_x}  \frac{g(x+u) -g(x-u)}{u} du,
\end{equation*}
where $I_x$ is the set of positive real numbers $u$ such that $g(x+u)$ or $g(x-u)$ is not zero. This set depends on $x$, but since $g$ is supported on $(-\l, \l)$, the set $I_x$ is included in a union of intervals whose total length is bounded by $4 \l$. Using the elementary identity
$$
g(x+u) - g(x-u) = \int_{x-u}^{x+u} g'(v) dv,
$$
and applying Fubini's theorem, we get ($x-u \leq v \leq x+u$ is equivalent to $u \geq |x-v|$):
\begin{equation*}
\int_{u \in I_x} \frac{g(x+u) -g(x-u)}{u} du = \int_{u \in I_x} \frac{du}{u} \int_{x-u}^{x+u} g'(v) dv 
= \int_{-\l}^{\l} g'(v) dv \int_{u \geq |x-v|, u \in I_{x}} \frac{1}{u} du.
\end{equation*}

Since $u \mapsto \frac{1}{u}$ is decreasing, and $I_x$ has its length bounded by $4 \l$, the innermost integral satisfies
$$
\int_{u \geq |x-v|, u \in I_{x}} \frac{1}{u} du \leq \int_{|x-v|}^{|x-v| + 4 \l} \frac{1}{u} du = \log \left(\frac{4 \l + |x-v|}{|x-v|}\right),
$$
so we have
$$
\int_{-\l}^{\l} g'(v) dv \int_{u \geq |x-v|, u \in I_{x}} \frac{1}{u} du \leq \int_{-\l}^{\l} |g'(v)|\log \left(\frac{4 \l + |x-v|}{|x-v|}\right) dv.
$$

Applying Cauchy-Schwarz's inequality, we get
$$
\PV \int \frac{g(t)}{t-x} dt \leq \|g'\|_{L^{\2}} \left( \int_{-\l}^\l \left|\ln \left(\frac{4 \l + |x-v|}{|x-v|} \right) \right|^{2} dv \right)^{1/2}.
$$
A linear change of variables $w = \frac{x-v}{\l}$ shows that, for $|x| \leq 2\l$, we have
$$
\left( \int_{-\l}^{\l} \left|\ln \left(\frac{4 \l + |x-v|}{|x-v|} \right) \right|^{2} dv \right)^{1/2} \preceq \l^{1/2},
$$
which proves \eqref{bound:CauchyPVLp}.
\end{proof}

We recall that, by definition, 
\begin{equation}
\label{rappelHLP}
\HLP(x) = \frac{1}{\pi} \PV \int \frac{\phiL(t)}{t-x} dt,
\end{equation}
with $\phiL$
 defined as 
$$
\phiL : t \mapsto \sqrt{\lambda^2-t^2} \varphi'(t),
$$
and we compute the first derivatives of $\phiL$ as
\begin{equation}
\label{phiL1} \phiL^{(1)}(t) = \frac{1}{\pi} \frac{-t}{\sqrt{\lambda^2-t^2}} \varphi^{(1)}(t) + \sqrt{\lambda^2 - t^2} \varphi^{(2)}(t) 
\end{equation}
\begin{equation}
\label{phiL2} \phiL^{(2)}(t) = \frac{1}{\pi} \left( \frac{-1}{\sqrt{\lambda^2 -t^2}} - \frac{t^2}{(\lambda^2-t^2)^{3/2}} \right) \varphi^{(1)}(t) + \left( \frac{-2t}{\sqrt{\lambda^2 - t^2}} \right) \varphi^{(2)}(t) + \sqrt{\lambda^2-t^2} \varphi^{(3)}(t),
\end{equation}
and (this is the only moment where we need the $C^4$ regularity of $\varphi$):
\begin{multline}
\label{phiL3}
\phiL^{(\3)}(t) = \frac{1}{\pi} \left( - \frac{3t}{(\lambda^2-t^2)^{3/2}} - \frac{3t^3}{(\lambda^2 - t^2)^{5/2}} \right) \varphi^{(\1)}(t) \\
+ \left( \frac{-3}{\sqrt{\lambda^2 -t^2}} - \frac{t^2}{(\lambda^2-t^2)^{3/2}} - \frac{2t^2}{(\lambda^2-t^2)^{5/2}} \right) \varphi^{(\2)}(t) \\
+ \left( \frac{-3t}{\sqrt{\lambda^2-t^2}} \right) \varphi^{(\3)}(t) + \sqrt{\lambda^2-t^2} \varphi^{(\4)}(t).
\end{multline}

Let us observe that, for $\k \geq 1$, if $g$ is a test function of class $C^{\k +1}$, we have
\begin{equation}
\label{deriveeHilbert}
\left( \PV \int \frac{g(t)}{t - \cdot} dt \right)^{(\k)}(x) = \int_0^{+\infty} \frac{g^{(\k)}(x+u) - g^{(\k)}(x-u)}{u} du = \PV \int \frac{g^{(\k)}(t)}{t-x} dt.
\end{equation}

In view of \eqref{bound:CauchyPVLp}, \eqref{rappelHLP}, \eqref{deriveeHilbert}, we get that, for $|x| \leq 2\l$
$$
\HLP(x) \preceq \l^{1/2} \| \phiL^{(1)} \|_{L^2}, \quad \left(\HLP\right)^{(1)}(x) \preceq \l^{1/2} \| \phiL^{(2)} \|_{L^2}, \quad \left(\HLP\right)^{(2)}(x) \preceq \l^{1/2}  \| \phiL^{(3)} \|_{L^2}.
$$
Since $\varphi$ is supported in $(-\l, \l)$ and $\l$ satisfies $0 < \l < \frac{1}{10} \lambda$, it is easy to check, from \eqref{phiL1}, \eqref{phiL2}, that:
\begin{align*}
& |\phiL^{(\1)}(t)| \leq \frac{\l}{\lambda} |\varphi^{(\1)}(t)| + \lambda |\varphi^{(\2)}(t)|, \\
& |\phiL^{(\2)}(t)| \leq \frac{1}{\lambda} |\varphi^{(\1)}(t)| + \frac{\l}{\lambda} |\varphi^{(\2)}(t)| + \lambda |\varphi^{(\3)}(t)| \\
& |\phiL^{(\3)}(t)| \leq \frac{\l}{\lambda^3} |\varphi^{(\1)}(t)| + \frac{1}{\lambda} |\varphi^{(\2)}(t)| + \frac{\l}{\lambda} |\varphi^{(\3)}(t)|  + \lambda |\varphi^{(\4)}(t)|.
\end{align*}
Using finally the homogeneity bounds \eqref{bounds:rescale}, we see that the dominant term is the last one in each line, and we obtain the controls for $|x| \leq 2\l$ as in \eqref{HLP0}, \eqref{HLP1}, \eqref{HLP2}. We now turn to the case $|x| \geq 2\l$. 

\textbf{Bound on $\left(\HLP\right)^{(\1)}$.} Since $\varphi$ is supported on $(-\l, \l)$, so is $\phiL$, and for $|x| \geq 2\l$ the integral defining \eqref{def:HLP} (or its derivatives) can be understood in the standard sense as a Riemann integral. In particular, we have
$$
\HLP(x) = \frac{1}{\pi} \int \frac{\sqrt{\lambda^2- t^2} \varphi'(t)}{t-x} dt = \frac{1}{\pi} \int \frac{\lambda \left(1 + \Oun\left(\frac{\l^2}{\lambda^2}\right) \right) \varphi'(t)}{x\left(1+ \Oun\left(\frac{\l}{x}\right) \right)} dt
$$
The first-order term vanishes because $\int \varphi'(t) = 0$. We are left with
$$
\HLP(x) \preceq \frac{\lambda}{x} \left(\frac{\l^2}{\lambda^2} + \frac{\l}{x}  \right) \|\varphi'\|_{L^1} \preceq \frac{\lambda \l}{x^2} \|\varphi'\|_{L^1},
$$
which yields the control for $|x| \geq 2\l$ as in \eqref{HLP0}.

\textbf{Bound on the first derivative.}
To treat $\left(\HLP\right)^{(\1)}(x)$, we write it as
$$
\left(\HLP\right)^{(\1)}(x) = \frac{1}{\pi} \int \frac{\phiL'(t)}{t-x} dt = \frac{1}{\pi} \frac{1}{x} \int \phiL^{(1)}(t) \left( 1 - \frac{t}{x} + \Oun\left( \frac{\l^2}{x^2} \right) \right) dt.
$$
The first-order term vanishes because $\int \phiL^{(1)}
(t) = 0$. Using \eqref{phiL1}, we may thus write
$$
\left(\HLP\right)^{(\1)}(x) = \frac{1}{\pi} \frac{1}{x} \int \left( \frac{-t}{\sqrt{\lambda^2-t^2}} \varphi^{(1)}(t) + \sqrt{\lambda^2 - t^2} \varphi^{(\2)}(t) \right) \left( - \frac{t}{x} + \Oun\left( \frac{\l^2}{x^2} \right) \right) dt.
$$
First, we compute
\begin{multline}
\label{compu1}
\frac{1}{x} \int \frac{-t}{\sqrt{\lambda^2-t^2}} \varphi^{(1)}(t)\left( \frac{t}{x} + \Oun\left( \frac{\l^2}{x^2} \right)  \right) dt = \frac{1}{x} \int \frac{-t}{\sqrt{\lambda^2-t^2}} \varphi^{(1)}(t)  \Oun\left( \frac{\l}{x} \right) dt \\
\preceq \frac{\l^2}{\lambda x^2} \| \varphi \|_{L^1} \preceq \frac{\l^2}{\lambda x^2}.
\end{multline}
Next, we write
\begin{multline}
\label{compu2}
\frac{1}{x} \int \sqrt{\lambda^2 - t^2} \varphi^{(\2)}(t) \left( - \frac{t}{x} + \Oun\left( \frac{\l^2}{x^2} \right) \right) dt \\ = \frac{1}{x} \int \lambda \left(1 + \Oun \left(\frac{\l^2}{\lambda^2}\right) \right) \varphi^{(\2)}(t)   \left( - \frac{t}{x} + \Oun\left( \frac{\l^2}{x^2} \right)  \right) dt. 
\end{multline}
The first order term vanishes because $\int t \varphi^{(2)}(t) = 0$. We are left with
$$
\frac{\lambda}{x} \int \varphi^{(2)}(t) \left( \frac{\l^3}{\lambda^2 x} + \frac{\l^2}{x^2} \right) dt \preceq \frac{\lambda \l^2}{x^3} \| \varphi^{(2)} \|_{L^1} \preceq \frac{\lambda  \l }{x^3}. 
$$
Combining \eqref{compu1} and \eqref{compu2}, the dominant term is $\frac{\lambda  \l }{x^3}$ and we obtain the control on $\left(\HLP\right)^{(\1)}(x)$ for $|x| \geq 2\l$, as in \eqref{HLP1}.
\end{proof}

\textbf{Bound on $\left(\HLP\right)^{(\2)}$.}
The proof is similar to the one for $\left(\HLP\right)^{(\1)}$, except that we push the expansions to the next order, and use the fact that $\int \phi_{\lambda}^{(2)}(t) = 0$ and $\int t^2 \varphi^{(3)}(t) = 0$.

We may now give the proof of Lemma \ref{lem:boundsonmulambda}.
\begin{proof}[Proof of Lemma \ref{lem:boundsonmulambda}]
We compute
\begin{align*}
& \mulambda(x) = \frac{-1}{\pi} \left[ \frac{1}{\sqrt{\lambda^2 - x^2}} \HLP(x) \right], \\
& \mulambda^{(\1)}(x) = \frac{-1}{\pi} \left[ \frac{-x}{(\lambda^2-x^2)^{3/2}} \HLP(x) + \frac{1}{\sqrt{\lambda^2-x^2}} \HLP^{(\1)}(x) \right], 
\end{align*}
and the second derivative is given by
\begin{multline*}
\mulambda^{(\2)}(x) = \frac{-1}{\pi} \big[ \left(\frac{-1}{(\lambda^2-x^2)^{3/2}} - \frac{3x^2}{(\lambda^2-x^2)^{5/2}} \right) \HLP(x) - \frac{3x}{(\lambda^2-x^2)^{3/2}} \HLP^{(\1)}(x) \\ + \frac{1}{\sqrt{\lambda^2-x^2}} \HLP^{(\2)}(x)\big],
\end{multline*}
and we use \eqref{HLP0}, \eqref{HLP1}, \eqref{HLP2} together with the simple observation that
$$
\frac{1}{\sqrt{\lambda^2-x^2}} \preceq \frac{1}{\sqrt{\lambda}\sqrt{\lambda - |x|}},
$$
which allows for a slight simplification in the formulas. We obtain
\begin{align*}
&\mulambda(x) \preceq \frac{1}{\lambda} \frac{\lambda}{l}, \quad |x| \leq 2\l, \\
&\mulambda(x) \preceq \frac{1}{\sqrt{\lambda} \sqrt{\lambda - |x|}} \frac{\lambda \l}{x^2}, \quad |x| \geq 2\l, \\
& \mulambda^{(\1)}(x) \preceq \frac{\l}{\lambda^3} \frac{\lambda}{\l} + \frac{1}{\lambda} \frac{\lambda}{\l^2}, \quad |x| \leq 2\l, \\
& \mulambda^{(\1)}(x) \preceq \frac{|x|}{\lambda^{3/2} (\lambda - |x|)^{3/2}} \frac{\lambda \l}{x^2} + \frac{1}{\sqrt{\lambda} \sqrt{\lambda - |x|}} \frac{\lambda \l}{x^3}, \quad |x| \geq 2\l, \\
& \mulambda^{(\2)}(x) \preceq \left( \frac{1}{\lambda^3} + \frac{\l^2}{\lambda^5} \right) \frac{\lambda}{\l} + \frac{\l}{\lambda^3} \frac{\lambda}{\l^2} + \frac{1}{\lambda}\frac{\lambda}{\l^3}, \quad |x| \leq 2\l, 
\end{align*}
and, for $|x| \geq 2\l$
\begin{equation*}
\mulambda^{(\2)}(x) \preceq \left( \frac{1}{\lambda^{3/2} (\lambda - |x|)^{3/2}}  + \frac{x^2}{\lambda^{5/2} (\lambda - |x|)^{5/2}}\right) \frac{\lambda \l}{x^2} + \frac{|x|}{\lambda^{3/2} (\lambda - |x|)^{3/2}} \frac{\lambda \l}{x^3} 
+ \frac{1}{\sqrt{\lambda} \sqrt{\lambda - |x|}} \frac{\lambda \l}{x^4}.
\end{equation*}
We obtain \eqref{mulambda0}, \eqref{mulambda1}, \eqref{mulambda2}.
\end{proof}

\subsection{Two intermediate results}
\begin{lem}[A decomposition of $\phiL$]
\label{lem:phiLEr}
Let $\phiL : t \mapsto \sqrt{\lambda^2-t^2} \varphi'(t)$. We have
\begin{equation}
\label{phiLEr}
\phiL(t) = \lambda \varphi'(t) + \Er(t),
\end{equation}
where $\Er$ is a $C^1$ function, supported in $[-\l, \l]$, satisfying
\begin{equation}
\label{Ercont}
|\Er|_{\0} \preceq \frac{\l}{\lambda}, \quad |\Er|_{\1} \preceq \frac{1}{\lambda}.
\end{equation}
\end{lem}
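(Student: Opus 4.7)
The natural definition is to set
$$
\Er(t) := \left(\sqrt{\lambda^2 - t^2} - \lambda\right) \varphi'(t),
$$
so that the decomposition \eqref{phiLEr} is automatic. The support property is immediate: since $\varphi'$ is supported in $(-\l, \l)$, so is $\Er$, and $\Er$ is $C^{1}$ because $\varphi$ is $C^{4}$ and $\sqrt{\lambda^2 - t^2}$ is smooth on $(-\l,\l)$ (we are well inside the domain thanks to $\l < \lambda/1000$).

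To bound $|\Er|_{\0}$, the key identity is
$$
\sqrt{\lambda^2 - t^2} - \lambda = \frac{-t^2}{\sqrt{\lambda^2 - t^2} + \lambda},
$$
which, for $|t| \leq \l$ and $\l < \lambda/1000$, is of order $\preceq \l^2 / \lambda$. Combining with the bound $|\varphi'|_{\0} \preceq 1/\l$ from \eqref{bounds:rescale} gives $|\Er|_{\0} \preceq \l/\lambda$. For $|\Er|_{\1}$, I would differentiate:
$$
\Er'(t) = \frac{-t}{\sqrt{\lambda^2 - t^2}} \varphi'(t) + \left(\sqrt{\lambda^2 - t^2} - \lambda\right) \varphi''(t).
$$
On the support of $\varphi'$, the first prefactor is $\preceq \l/\lambda$ and $|\varphi'|_{\0} \preceq 1/\l$, so the first term is $\preceq 1/\lambda$. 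The second term is controlled by $(\l^2/\lambda) \cdot |\varphi''|_{\0} \preceq (\l^2/\lambda) \cdot (1/\l^2) = 1/\lambda$. Summing yields $|\Er|_{\1} \preceq 1/\lambda$.

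There is no real obstacle here; the only thing to be careful about is tracking the dependence in $\l, \lambda$ through \eqref{bounds:rescale} and the assumption \eqref{lvslambda} to make sure the $\preceq$ constants are indeed independent of $\l,\lambda$ (only depending on $\ophi$).
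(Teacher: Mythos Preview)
Your proof is correct and follows essentially the same approach as the paper: define $\Er(t) = (\sqrt{\lambda^2-t^2}-\lambda)\varphi'(t)$, use the expansion $\sqrt{\lambda^2-t^2}-\lambda = O(\l^2/\lambda)$ on the support of $\varphi'$ together with the rescaling bounds \eqref{bounds:rescale} for $|\Er|_{\0}$, then differentiate and bound the two resulting terms for $|\Er|_{\1}$. Your write-up is in fact slightly more explicit about the algebraic identity $\sqrt{\lambda^2-t^2}-\lambda = -t^2/(\sqrt{\lambda^2-t^2}+\lambda)$ than the paper's.
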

\begin{proof}[Proof of Lemma \ref{lem:phiLEr}]
To see that \eqref{phiLEr} holds with \eqref{Ercont} we simply expand
$$
\sqrt{\lambda^2-t^2}\varphi'(t) = \lambda \varphi'(t) + \Oun\left( \frac{\l^2}{\lambda} \right) \varphi'(t),
$$
and since $|\varphi|_{\1} \preceq \frac{1}{\l}$, we obtain the first bound in \eqref{Ercont}. 

We may then compute
$$
\phiL'(t) = \frac{-t}{\sqrt{\lambda^2-t^2}} \varphi'(t) + \sqrt{\lambda^2-t^2} \varphi''(t) =  \Oun\left( \frac{\l}{\lambda} \right) \varphi'(t) + \lambda \varphi''(t) + \Oun\left( \frac{\l^2}{\lambda} \right) \varphi''(t),
$$
which yields the second bound in \eqref{Ercont}. 
\end{proof}

\begin{lem}[The integral of $\HLP$ on large intervals.]
\label{lem:integraledeH}
Let $a$ be in $[10 \l, \lambda/2]$. We have
\begin{align}
\label{integraledeH}
& \int_{-a}^{a} \HLP(y) dy  \preceq \frac{\l \lambda}{a}, \\
\label{integraledeyyH}
 & \int_{-a}^a y^2 |\HLP(y)| dy \preceq \l \lambda a.
 \end{align}
\end{lem}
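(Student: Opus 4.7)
The plan is to treat the two bounds separately. The bound \eqref{integraledeyyH} follows immediately from the pointwise controls of Lemma \ref{lem:boundsonHLP}: splitting $\int_{-a}^a y^2 |\HLP(y)| dy$ into the regions $|y| \leq 2\l$ and $2\l \leq |y| \leq a$, and using $|\HLP| \preceq \lambda/\l$ on the first and $|\HLP| \preceq \lambda\l/y^2$ on the second, one gets $\l^2 \lambda + \lambda \l a \preceq \lambda \l a$ since $a \geq 10\l$.

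The harder bound \eqref{integraledeH} requires cancellation. The idea is to integrate $\HLP$ by swapping the order of integration with the principal value, producing a closed-form kernel on which the smallness of $\l/a$ can be exploited. Writing $\HLP(y) = \frac{1}{\pi}\PV\int \frac{\phiL(t)}{t-y} dt$ and applying Fubini (after regularizing the PV by excising $|t-y| \leq \epsilon$, integrating, and letting $\epsilon \to 0$), one checks that for every $t$ (both inside and outside $[-a,a]$)
\begin{equation*}
\int_{-a}^a \frac{dy}{t-y} = \log\left|\frac{t+a}{t-a}\right|,
\end{equation*}
so that
\begin{equation*}
\int_{-a}^a \HLP(y)\,dy = \frac{1}{\pi} \int \phiL(t) \log\left|\frac{t+a}{t-a}\right| dt.
\end{equation*}

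Since $\phiL$ is supported in $[-\l,\l]$ and $a \geq 10 \l$, we have $|t/a| \leq 1/10$ on the support of $\phiL$, so the Taylor expansion
\begin{equation*}
\log\left|\frac{t+a}{t-a}\right| = \frac{2t}{a} + R(t), \qquad |R(t)| \preceq \frac{|t|^3}{a^3},
\end{equation*}
is valid. The leading-order contribution is bounded by
\begin{equation*}
\frac{2}{\pi a}\left|\int t\,\phiL(t)\,dt\right| \leq \frac{2}{\pi a} \cdot \l \cdot \lambda \cdot \|\varphi'\|_{L^1} \preceq \frac{\l\lambda}{a},
\end{equation*}
using $|t| \leq \l$, $\sqrt{\lambda^2-t^2} \leq \lambda$, and the homogeneity bound $\|\varphi'\|_{L^1} = \|\ophi'\|_{L^1} \preceq 1$ from \eqref{bounds:rescale}. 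The remainder is bounded by
\begin{equation*}
\frac{1}{\pi a^3} \int_{-\l}^{\l} |t|^3 |\phiL(t)|\,dt \preceq \frac{\l^3 \lambda}{a^3} = \frac{\l\lambda}{a} \cdot \frac{\l^2}{a^2} \preceq \frac{\l\lambda}{a},
\end{equation*}
again because $\l \leq a/10$. Summing the two contributions yields \eqref{integraledeH}.

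The main (mild) obstacle is justifying the Fubini step, as the inner kernel $(t-y)^{-1}$ is singular when $t \in (-a,a)$; this is handled by the standard $\epsilon$-truncation of the principal value, explicit computation of $\int_{-a}^{t-\epsilon} + \int_{t+\epsilon}^{a}$, and verification that the logarithmic divergences in $\epsilon$ cancel, producing the same closed form $\log|(t+a)/(t-a)|$ as in the non-singular case. Once this identity is in hand, the rest is elementary.
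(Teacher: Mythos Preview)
Your proof is correct. Both parts are handled soundly: \eqref{integraledeyyH} via the pointwise bounds of Lemma~\ref{lem:boundsonHLP}, exactly as the paper does, and \eqref{integraledeH} by swapping the order of integration and Taylor-expanding the resulting kernel $\log|(t+a)/(t-a)|$.

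The paper's argument for \eqref{integraledeH} is close in spirit but organized differently. Instead of working with $\phiL$ directly, it first invokes the decomposition $\phiL = \lambda\varphi' + \Er$ of Lemma~\ref{lem:phiLEr}. The $\Er$ piece is disposed of by a crude pointwise bound ($\preceq \l/\lambda$) integrated over $[-a,a]$, giving $a\l/\lambda \preceq \l\lambda/a$. For the main $\lambda\varphi'$ piece, the paper swaps the order of integration using the representation \eqref{def:CauchyPV}, obtaining $\lambda\bigl(\PV\!\int \frac{\varphi(t)}{a-t}\,dt - \PV\!\int \frac{\varphi(t)}{-a-t}\,dt\bigr)$; since $a \geq 10\l$ these are ordinary integrals bounded by $\l/a$. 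Your route bypasses Lemma~\ref{lem:phiLEr} entirely and is arguably cleaner: one Fubini step on the full $\phiL$ produces the closed-form kernel, and the Taylor expansion $2t/a + O(t^3/a^3)$ extracts the required smallness directly. The trade-off is that your Fubini step lives on the singular kernel $(t-y)^{-1}$ and needs the dominated-convergence justification you sketched (the truncated principal values $\int_{|t-y|>\epsilon}$ are uniformly bounded in $\epsilon$ and $y$), whereas the paper's Fubini is applied to the manifestly integrable integrand $\frac{\varphi'(y+u)-\varphi'(y-u)}{u}$.
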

\begin{proof}[Proof of Lemma \ref{lem:integraledeH}]
\textbf{Preliminary.}
 We use the definition \eqref{def:CauchyPV} of the Cauchy principal value, and write
$$
\HLP(y) = \frac{1}{\pi} \PV \int \frac{\varphi'(t) \sqrt{\lambda^2-t^2}}{y-t} dt = \int_{0}^{+\infty} \frac{ \phiL(y+u) - \phiL(y-u)}{u} du,
$$
where $\phiL : t \mapsto \sqrt{\lambda^2-t^2} \varphi'(t)$. We use Lemma \ref{lem:phiLEr} and decompose $\phiL$ as $\phiL = \lambda \varphi'(t) + \Er(t)$.
We may thus write
\begin{equation}
\label{yleqPV}
\HLP(y)  = \frac{\lambda}{\pi}  \int_{u=0}^{+ \infty} \frac{\varphi'(y+u) - \varphi'(y-u)}{u} du +  \int_{u=0}^{+ \infty} \frac{\Er(y+u) - \Er(y-u)}{u} du .
\end{equation}

The second term in the right-hand side of \eqref{yleqPV} can be bounded using \eqref{bound:CauchyPVLp}. We obtain
$$
\int_{u=0}^{+ \infty} \frac{\Er(y+u) - \Er(y-u)}{u} du = \PV \int \frac{\Er(t)}{y-t} dt \preceq \l^{1/2} \|\Er'\|_{L^2},
$$
and in view of the second inequality in \eqref{Ercont}, we get
\begin{equation}
\label{PVEr} \PV \int \frac{\Er(t)}{y-t} dt \preceq \frac{\l}{\lambda}.
\end{equation}

\textbf{Proof of \eqref{integraledeH}.}
We use \eqref{yleqPV} and write
\begin{equation}
\label{decompoaaHLP}
\int_{-a}^a \HLP(y) dy = \frac{\lambda}{\pi} \int_{-a}^a \int_{u=0}^{+ \infty} \frac{\varphi'(y+u) - \varphi'(y-u)}{u} du  +  \frac{1}{\pi} \int_{-a}^a \left(\PV \int \frac{\Er(t)}{y-t} dt\right) dy.
\end{equation}

We may bound the second term in the right-hand side of \eqref{decompoaaHLP}, using \eqref{PVEr}, as
\begin{equation}
\label{partieEr}
\int_{-a}^a \left(\PV \int \frac{\Er(t)}{y-t} dt\right) dy \preceq \frac{a \l}{\lambda}.
\end{equation}

We now turn to the first term in the right-hand side of \eqref{decompoaaHLP}. It can be expressed, using Fubini's theorem, as
\begin{multline*}
\lambda \int_{-a}^{a}  \int_{u=0}^{+ \infty} \frac{\varphi'(y+u) - \varphi'(y-u)}{u} du   \\
= \lambda \int_{0}^{+\infty} \frac{\varphi(u + a ) - \varphi(u-a) - \varphi(a-u) + \varphi(-a -u)}{u} du \\
= \lambda \left( \PV \int \frac{\varphi(t)}{a-t} dt - \PV \int \frac{\varphi(t)}{-a-t} dt\right).
\end{multline*}
Since $\varphi$ is supported on $(-\l, \l)$, and since $a \geq 10 \l$, these are standard Riemann integrals, and we have
$$
\PV \int \frac{\varphi(t)}{a-t} dt \preceq \frac{1}{a} \|\varphi\|_{L^1} \preceq \frac{\l}{a},
$$
and similarly for the other term. We get
\begin{equation}
\label{partiePhiL}
\lambda \int_{-a}^{a}  \int_{u=0}^{+ \infty} \frac{\varphi'(y+u) - \varphi'(y-u)}{u} du \preceq \frac{\l \lambda}{a}.
\end{equation}
Combining \eqref{partiePhiL} with \eqref{partieEr} (which is at most of the same order) yields \eqref{integraledeH}.

\textbf{Proof of \eqref{integraledeyyH}.}
We simply use the bounds of \eqref{HLP0}.
\end{proof}

\subsection{Proof of Lemma \ref{lem:approximatemu}}
\label{sec:proofapproximatemu}
\begin{proof}[Proof of Lemma \ref{lem:approximatemu}]
First, we define $\muL$ as $\mulambda$ on $[- \lambda + \L, \lambda - \L]$.

\textbf{On $[-\lambda + \L/2, - \lambda + \L]$.}
Let $S_a, S_b, S_c$ be three smooth, non-negative functions defined on $[-1,0]$ such that
\begin{align*}
& \forall \k \geq 0, S_a^{(\k)}(-1) = 0, \quad S_a(0) = 1, \quad \forall \k \geq 1, S_a^{(\k)}(0) = 0 \\
& \forall \k \geq 0, S_b^{(\k)}(-1) = 0, \quad S_b(0) = 1, \quad S_b^{(\1)}(0) = 1, \quad \forall \k \geq 2, S_b^{(\k)}(0) = 0 \\
& \forall \k \geq 0, S_c^{(\k)}(-1) = 0, \quad S_c(0) = 1, \quad S_c^{(\1)}(0) = 0, \quad S_c^{(\2)}(0) = 1,  \quad \forall \k \geq 2, S_b^{(\k)}(0) = 0.
\end{align*}
We let 
$$
\Pp = - \lambda + \L, \quad \size = \frac{\L}{2}
$$ 
and we define
$$
\D_0 = \mulambda\left(\Pp \right), \quad \D_1 = \mulambda^{(\1)}(\Pp), \quad \D_2 = \mulambda^{(\2)}(\Pp).
$$
We have, in view of \eqref{mulambda0}, \eqref{mulambda1}, \eqref{mulambda2}:
$$
\D_0 \preceq \frac{\l}{\lambda^{3/2} \L^{1/2}}, \quad \D_1 \preceq \frac{\l}{\lambda^{3/2} \L^{3/2}}, \quad \D_2 \preceq \frac{\l}{\lambda^{3/2} \L^{5/2}}.
$$
Finally, wet let $\Rr$ be the function
$$
\Rr(x) := \D_0 S_a\left( (x- \Pp) \frac{1}{\size} \right) S_b\left( (x- \Pp) \frac{\D_1}{\D_0} \right) S_c\left( (x- \Pp) \sqrt{\frac{\D_2}{\D_0}} \right).
$$
The function $\Rr$ is defined on $[\Pp - \size, \Pp] = [-\lambda + \L/4, - \lambda + \L]$, and on this interval we let $\muL(x) = \Rr(x)$.

By construction, the derivatives of order $\0, \1, \2$ of $\Rr$ and $\mulambda$ coincide at $\Pp$, so the piece-wise definition is $C^2$ at $\Pp$. Moreover, it can be checked that for $\k = \0, \1, \2$, we have:
\begin{equation}
\label{Rrk}
\Rr^{(\k)}(x) \preceq \frac{\l}{\L^{\k} \lambda^{3/2} \L^{1/2}}.
\end{equation}
This is easy to see for $\Rr^{(\0)}$, since $\D_0 \preceq \frac{\l}{\lambda^{3/2} \L^{1/2}}$. For the first and second derivatives, we use the fact that $\frac{1}{\size}, \frac{\D_1}{\D_0}$ and $\sqrt{\frac{\D_2}{\D_0}}$ have the same order $\frac{1}{\L}$.

\textbf{On $[-\lambda + \L/4, - \lambda + \L/2]$.}
Let $S_d$ be a smooth, non-negative function defined on $[-1, 0]$ such that 
$$
\forall \k \geq 0, S_d^{(\k)}(-1) = S_d^{(\k)}(0) = 0, \quad \int S_d(x) dx = 1.
$$
We overwrite the definition above and let now 
$$
\Pp := - \lambda + \L/2, \quad \size := \frac{\L}{4},
$$ 
and we introduce 
$$
\D_{-1} :=  \int_{-\lambda}^{-\lambda + \L} \mulambda(x) dx - \int_{-\lambda + \L/2}^{-\lambda + \L} \Rr(x) dx.
$$
Finally, we let $\Tt$ be the function 
$$
\Tt(x) =  \frac{\D_{-1}}{\size}  S_d\left( (x - \Pp) \frac{1}{\size} \right). 
$$
The function $\Tt$ is defined on $[-\lambda + \L/4, - \lambda + \L/2]$,
and on this interval we let $\muL(x) = \Tt(x)$. By construction, all the derivatives of $\Tt$ and $\Rr$ are equal (to $0$) at the point $\Pp$, so we still get a $C^2$ function.

Finally, we let $\muL(x) = 0$ on $[-\lambda, - \lambda + \L/4]$, and this connects with the previous definition in a $C^2$ way because all the derivatives of $\Tt$ vanish at $-\lambda + \L/4$. We define $\muL$ similarly near the other endpoint.

\textbf{Checking the statements.}
By construction, $\muL$ and $\mulambda$ coincide on a large interior part, and $\muL$ vanishes near the endpoints, so the first and fourth statements of the lemma are satisfied. Also by construction, we have
$$
\int \Tt(x) dx + \int \Rr(x)  = \int_{-\lambda}^{-\lambda + \L} \mulambda(x) dx,
$$
so the total masses of $\mulambda$ and $\muL$ are equal near the endpoints, and \eqref{massisconserved} holds.

We have already checked \eqref{bound:muL} for the first part of the construction, see \eqref{Rrk}. On $[-\lambda + \L/4, - \lambda + \lambda/2]$ we have
\begin{equation}
\label{muLkB}
|\Tt^{(\k)}(x)| \preceq \frac{1}{\size^{\k+1}} \D_{-1},
\end{equation}
and we observe that $\D_{-1}$ is of order $\size \times \D_0$, with $\D_0 \preceq \frac{\l}{\lambda^{3/2}\L^{1/2}}$, which yields the result.
\end{proof}

\subsection{Proof of Proposition \ref{prop:logpotmuL}}
\label{sec:logputmuL}
\begin{proof}[Proof of Proposition \ref{prop:logpotmuL}]
We recall that, by definition, we have
$$
\ErrorLog(x) : = \int - \log |x-y| (\muL- \mulambda(y))dy.
$$
We may split $\ErrorLog$ as the sum $\Errorlog(x) = \ErrorLogL(x) + \ErrorLogR(x)$, where $\ErrorLogL$ (resp. $\ErrorLogR$) is the contribution coming from the left (resp. right) endpoint, i.e.
\begin{align}
\label{def:ErrorLogL}
& \ErrorLogL(x) : = \int_{-\lambda}^{-\lambda + \L} \log |x-y| (\mulambda(y) - \muL(y))dy, \\
\label{def:ErrorLogR}
&  \ErrorLogR(x) : = \int_{\lambda - \L}^{\lambda} \log |x-y| (\mulambda(y) - \muL(y))dy.
\end{align}

\textbf{The sup norm, for $x$ close to the endpoints.}
Let us start with a rough bound:  
\begin{equation}
\label{ErrorLog0}
\ErrorLogL(x) \preceq  \int_{-\lambda}^{-\lambda + \L} \left|\log |x-y| \right| |\muL(y)| dy +  \int_{-\lambda}^{-\lambda + \L} |\log|x-y|| |\mulambda(y)| dy.
\end{equation}
Of course, if $x$ is far from the endpoints, this is sub-optimal because we do not use the fact that $\muL -\mulambda$ has mass zero, in fact we will use this inequality only for $x$ at distance $\Oun(\L)$ of an endpoint. Using \eqref{muL0}, we see that
\begin{equation}
\label{ErrorLog0muL}
\int_{-\lambda}^{-\lambda + \L} \left|\log |x-y| \right| |\muL(y)| dy \preceq \frac{\l \sqrt{\L} \log \lambda}{\lambda^{3/2}}.
\end{equation}
It remains to bound the second integral in \eqref{ErrorLog0}. We use \eqref{mulambda0} to write
$$
\int_{-\lambda}^{-\lambda + \L} |\log|x-y|| \mulambda(y) dy \preceq \int_{-\lambda}^{-\lambda + \L} \frac{\l \sqrt{\lambda} |\log|x-y||}{\lambda^2 \sqrt{\lambda - |y|}} dy \preceq \frac{\l}{\lambda^{3/2}} \int_{-\lambda}^{-\lambda + \L} \frac{|\log|x-y||}{\sqrt{\lambda - |y|}} dy.
$$
We use Hölder's inequality and write
$$
 \int_{-\lambda}^{-\lambda + \L}
 \frac{|\log|x-y||}{\sqrt{\lambda - |y|}} dy 
 \leq  
 \left(\int_{-\lambda}^{-\lambda + \L} |\log|x-y||^{3} \right)^{1/3} 
 \left(\int_{-\lambda}^{-\lambda + \L} \frac{1}{(\lambda - |y|)^{3/4}}\right)^{2/3} dy.
$$
An elementary computation shows that for $x$ in $(-2\lambda, 2\lambda)$
$\left(\int_{-\lambda}^{-\lambda + \L} |\log|x-y||^{3} \right)^{1/3} \preceq \L^{1/3} \log \lambda$, and that, on the other hand,
$\left(\int_{-\lambda}^{-\lambda + \L} \frac{1}{(\lambda - |y|)^{3/4}}\right)^{2/3} \preceq \L^{1/6}$, hence we obtain 
\begin{equation}
\label{ErrorLog0mu}
\int_{-\lambda}^{-\lambda + \L} |\log|x-y|| \mulambda(y) dy \preceq \frac{\l \sqrt{\L} \log \lambda}{\lambda^{3/2}}.
\end{equation}
Combining \eqref{ErrorLog0}, \eqref{ErrorLog0muL} and \eqref{ErrorLog0mu}, we obtain \eqref{bound:ErrorLog0}.

\textbf{The derivative, for $x$ far from the endpoints.}
We now turn to proving \eqref{bound:ErrorLog1}. Let $x$ such that $|x-\lambda| \geq 2\L$. We have, by definition,
$$
\ErrorLogR(x) = \int_{\lambda-\L}^{\lambda} - \log |x-y| (\muL(y) - \mulambda(y)) dy.
$$
We may differentiate under the integral sign and write
$$
\left(\ErrorLogR \right)'(x) = \int_{\lambda - \L}^{\lambda} \frac{-1}{x-y} (\muL(y) - \mulambda(y)) dy.
$$
A Taylor's expansion yields
\begin{multline}
\label{mulmuA}
\int_{\lambda - \L}^{\lambda} \frac{-1}{x-y} (\muL(y) - \mulambda(y)) dy
= \frac{1}{x-\lambda} \int_{\lambda-\L}^{\lambda } (\muL - \mulambda)(y) dy \\
+ \Oun\left( \frac{\L}{(\lambda-x)^2} \right) \int_{\lambda-\L}^{\lambda} (|\muL(y)| + |\mulambda(y)|)dy.
\end{multline}
The first term in the right-hand side of \eqref{mulmuA} vanishes because, by construction as in \eqref{massisconserved}, $\muL$ and $\mulambda$ have the same mass on $[\lambda - \L, \lambda]$. We can estimate the integral in the second term directly, and we obtain
\begin{equation*}
\int_{\lambda - \L}^{\lambda} \frac{-1}{x-y} (\muL(y) - \mulambda(y)) dy \preceq \frac{\L^{3/2} \l}{\lambda^{3/2} (\lambda - |x|)^2}.
\end{equation*}
The same argument holds near the other endpoint, which proves \eqref{bound:ErrorLog1}.
\end{proof}

\subsection{Proof of Lemma \ref{lem:variance}}
\label{sec:prooflemvariance}
\begin{proof}[Proof of Lemma \ref{lem:variance}]
Let us introduce $\Varphi$ as
\begin{equation}
\label{def:Varphi}
\Varphi := \frac{-1}{\pi^2} \int \frac{\varphi(x)}{\sqrt{\lambda^2-x^2}} \int\frac{\sqrt{\lambda^2-t^2} \varphi'(t)}{t-x} dt dx,
\end{equation}
which is equal to 
$$
\iint - \log |x-y| \mulambda(x) \mulambda(y) dx dy.
$$

\textbf{The error due to $\muL$.} The first step in the proof is to show that
\begin{equation}
\label{toVarphi}
\iint - \log |x-y| \muL(x) \muL(y) dx dy = \Varphi + \Oun\left( \frac{\l^2 \L \log(\lambda)}{\lambda^{3}} \right).
\end{equation}

We decompose the left-hand side of \eqref{energiemsLm} as
\begin{multline}
\label{decompositionenergietsmsLm}
\iint - \log |x-y| \muL(x) \muL(y) dx dy = 
\iint  - \log |x-y| \mulambda(x)\mulambda(y) dx dy \\
+ \iint - \log |x-y| (\muL - \mulambda)(x) (\muL - \mulambda)(y)
\\
 + 2\iint  - \log |x-y| \mulambda(y) (\muL - \mulambda)(x).
\end{multline}

Using the fact that $\mulambda$ satisfies \eqref{hmuDSE} and has total mass $0$ we may write
\begin{multline} \label{varianceA}
\iint  - \log |x-y| \mulambda(x)\mulambda(y) dx dy = \int \varphi(x) \mulambda(x) dx \\ = \frac{-1}{\pi^2} \int \frac{\varphi(x)}{\sqrt{\lambda^2-x^2}} \int\frac{\sqrt{\lambda^2-t^2} \varphi'(t)}{t-x} dt dx.
\end{multline}
which is equal to $\Varphi$ as defined in \eqref{def:Varphi}.

Using again \eqref{hmuDSE}, and the fact that, by construction, $\muL - \mulambda$ has total mass $0$ we write that
$$
\iint - \log |x-y| \mu(y) (\muL(x) - \mulambda(x)) dy dx  = \int \varphi(x) (\muL(x) - \mulambda(x)) dx, 
$$
but by construction $\varphi$ vanishes on the support of $\muL- \mulambda$, hence this is equal to $0$.

Finally, we write
\begin{multline*}
\iint - \log |x-y| (\muL(x) - \mulambda(x)) (\muL(y) - \mulambda(y))  dx dy \\
= \int \ErrorLog(x) (\muL(x) - \mulambda(x)) dx,
\end{multline*}
where $\ErrorLog = \ErrorLogL + \ErrorLogR$ as in Proposition \ref{prop:logpotmuL}. We can use \eqref{bound:ErrorLog0} and the fact that $\muL(x) - \mulambda(x)$ is supported near the endpoints of $(-\lambda, \lambda)$ to write
\begin{equation*}
\ErrorVar \preceq  \frac{\l \sqrt{L} \log(\lambda)}{\lambda^{3/2}}  \int_{-\lambda}^{-\lambda + \L} (|\muL(x)| + |\mulambda(x)|) dx \preceq \frac{\l \sqrt{\L} \log(\lambda)}{\lambda^{3/2}} \frac{\l \sqrt{\L}}{\lambda^{3/2}},
\end{equation*}
which yields \eqref{toVarphi}.

\textbf{The error due to $\lambda$ finite.}
Now, we compare $\Varphi$ with the norm $\ophiH$, we claim that:
\begin{equation}
\label{deVarphiaophi}
\Varphi = 2 \ophiH^2 + \Oun\left(\frac{\l^2}{\lambda^2}\right).
\end{equation} 
Indeed, we may write, since $\varphi$ is supported in $(-\l, \l)$
$$
 \int \frac{\varphi(x)}{\sqrt{\lambda^2-x^2}} \PV \int \frac{\sqrt{\lambda^2-t^2} \varphi'(t)}{t-x} dt dx = \int \varphi(x) \frac{1}{\lambda} \left( 1 + \Oun\left( \frac{\l^2}{\lambda^2} \right) \right) \HLP(x) dx, 
$$
 and we can use \eqref{HLP0} to write this as
 $$
 \frac{1}{\lambda} \int \varphi(x) \HLP(x) dx + \Oun\left( \frac{\l^2}{\lambda^2} \right).
 $$
 Now, we have, by definition, 
 $$
 \HLP(x) = \frac{1}{\pi}  \PV \int \frac{\phiL(t)}{x-t} dt, 
 $$
 and $\phiL$ admits the decomposition as in \eqref{phiLEr}, \eqref{Ercont}. It implies that
 $$
 \frac{1}{\lambda} \int \varphi(x) \HLP(x) dx = \frac{1}{\pi \lambda} \int \varphi(x) \PV \frac{\lambda \varphi'(t)}{x-t} dt dx + \Oun\left( \frac{\l^2}{\lambda^2} \right).
 $$
 We may thus write $\Varphi$ as
 $$
 \Varphi = \frac{-1}{\pi^2} \int \varphi(x) \PV \int \frac{\varphi'(t)}{x-t} dt dx + \Oun\left( \frac{\l^2}{\lambda^2} \right),
 $$
 and the result follows from the identity
 $$
 \frac{-1}{\pi^2} \int \varphi(x) \PV \int \frac{\varphi'(t)}{x-t} dt dx = 2 \ophiH^2,
 $$
with $\ophiH$ as in \eqref{def:ophiH}, which can be checked by elementary means.
\end{proof}

\subsection{Proof of Lemma \ref{lem:energyplusfluct}}
\label{sec:proofenergyplusfluct}
\begin{proof}[Proof of Lemma \ref{lem:energyplusfluct}]
For simplicity, we will use the notation  $\times$ as follows 
$$
A \times B = \iint_{\diagc} - \log |x-y| A(x) B(y).
$$ 
We have
\begin{multline*}
\left(d\C - \tmsL\right) \times \left(d\C - \tmsL \right) = \left(d\C - 1 - s \muL \right) \times \left(d\C - 1 - s \muL \right) \\
= \left(d\C - 1\right) \times (d\C - 1) + s^2 \cdot \muL \times \muL - 2 s \cdot \muL \times (d\C - 1).
\end{multline*}
By Lemma \ref{lem:variance}, we have
$$
\muL \times \muL = 2 \ophiH^2 + \ErrorVar. 
$$
Next, we write
$$
\muL \times (d\C - 1) = \mulambda \times (d\C - 1) + (\muL - \mulambda) \times (d\C -1).
$$
We recall that $\Gam$ is the logarithmic potential generated by $\mulambda$ and that $\ErrorLog$ is the  logarithmic potential generated by the difference $\muL- \mulambda$. So 
\begin{equation*}
\mulambda \times (d\C - 1) = \iint_{\diagc} -\log |x-y| \mulambda(y) (d\C(x) - dx) 
= \int_{\Lambda} \Gam(x) (d\C-dx),
\end{equation*}
and similarly
\begin{multline*}
(\muL - \mulambda) \times (d\C -1) = \iint_{\diagc} -\log |x-y| (\muL - \mulambda)(y) (d\C(x) - dx) \\
= \int_{\Lambda} \ErrorLog(x) (d\C-dx).
\end{multline*}
\end{proof}

\subsection{Proof of Lemma \ref{lem:transportmap}}
\label{sec:prooftransportmap}
\begin{proof}[Proof of Lemma \ref{lem:transportmap}]
Since $\muL$ is continuous and bounded as in Lemma \ref{lem:approximatemu}, and $\smax$ is chosen as in \eqref{def:smax}, we see that $1 + s \muL$ is a continuous, positive function on $\Lambda$. Consequently, $\Ms$ is $C^1$ and increasing, thus it is a $C^1$ bijection, and so is $\Phis$. The fact that $\Phis$ transports the constant density onto $\tmsL$ results from the definition, in fact $\Phis$ is the “monotone rearrangement” of the constant density onto $\tmsL$.

By construction, $\muL$ has total mass $0$ and vanishes near the endpoints, therefore
$\Ms(x) = x+\lambda$ near the endpoints, 
which implies that $\Phis$ coincides with the identity map near the endpoints.

We now turn to proving estimates on $\psi_s$. We may write, by definition, that for any $x$ in $[-\lambda, \lambda]$ we have
$$
\int_{-\lambda}^{\Phis(x)} \left(1 + s\muL(y)\right) dy = x + \lambda,
$$
and we thus obtain, as claimed in \eqref{PhisA0},
\begin{equation}
\label{PhisA}
\psis(x) = \Phis(x) - x = s \int_{-\lambda}^{\Phis(x)} \muL(y).
\end{equation}

\textbf{Bound on $\psis$.} We easily deduce $|\psis|_{\0} \leq s \|\muL\|_{L^1}$, and since $|s| \leq \smax$  as in \eqref{def:smax}, we have 
\begin{equation}
\label{psis0proof}
|\psis|_{\0} \leq 1.
\end{equation}
Finer bounds on $\psis$ are the goal of another lemma.

\textbf{Bound on $\psis^{(\1)}$.} Let us differentiate \eqref{PhisA} with respect to $x$:
$$
\Phis'(x) - 1 = -s \muL \circ \Phis(x) \cdot \Phis'(x),
$$
and we obtain
\begin{equation}
\label{PhisB}
\Phis'(x) = \frac{1}{1 + s \muL \circ \Phis(x)}.
\end{equation}
The numerator is bounded below by a positive constant, and a Taylor's expansion yields
$$
\left|\Phis'(x) - 1\right| \preceq \frac{1} s |\muL \circ \Phis(x)|,
$$
hence, since by definition $\psis' = \Phis' - 1$, we get
$$
\psis'(x) \preceq s |\muL \circ \Phis(x)|. 
$$
By definition $|\Phis(x) -x| = |\psis(x)|$, and \eqref{psis0proof} holds, we may thus write 
$$
|\muL \circ \Phis(x)| \leq \sup_{y \in [x-1, x+1]} |\muL(x)| \leq |\muL|_{\0, \Vx},
$$
with the notation of \eqref{def:Vdex}. This yields \eqref{bound:psis1}. In particular, $|\psis|_{\1} \preceq s|\muL|_{\0}$ and is thus bounded, and so is $\Phis'$.

\textbf{Bound on $\psis^{(2)}$.} We differentiate \eqref{PhisB} again and write
\begin{equation}
\label{PhisC}
\Phis^{(\2)}(x) = \frac{-s \muL' \circ \Phis(x) \Phis'(x)}{(1 + s\muL \circ \Phis(x))^2}.
\end{equation}
We have previously established that for $|s| \leq \smax$, we have $\Phis' \preceq 1$, and the quantity $(1 - s\muL \circ \Phis(x))$ is bounded above and below by a positive constant. We obtain
\begin{equation}
\label{prepsi2}
\psis^{(\2)}(x) = \Phis^{(\2)}(x) \preceq s \muL' \circ \Phis(x) \preceq s |\muL|_{\1, \Vx},
\end{equation}
which yields \eqref{bound:psis2}.

\textbf{Bound on $\psis^{(\3)}$.} Finally, differentiating \eqref{PhisC} again, we get
\begin{equation}
\label{PhisD}
\Phis^{(\3)}(x) = \frac{-s \muL^{(\2)} \circ \Phis(x) \left(\Phis'(x)\right)^2 - s \muL' \circ \Phis(x) \Phis^{(\2)}(x)}{(1+ s\muL \circ \Phis(x))^2} 
+ \frac{2s^2 \left(\muL' \circ \Phis(x)\right)^2 \left(\Phis'(x)\right)^2}{(1 + s\muL \circ \Phis(x))^3}.
\end{equation}
Using the fact that $\Phis'$ is bounded, that $\Phis^{(\2)}(x)$ is of order $s |\muL|_{\1, V(x)}$ (see \eqref{prepsi2}) and that the quantity $1 + s\muL \circ \Phis(x)$
is bounded below by a positive constant, we obtain
$$
\psis^{(\3)}(x) = \Phis^{(\3)}(x) \preceq s |\muL|_{\2, \Vx} + s^2 |\muL|^2_{\1, \Vx},
$$
and one can check from \eqref{muL1}, \eqref{muL2} that the dominant term in the right-hand side is the first one, which yields \eqref{bound:psi3}.
\end{proof}

\subsection{Proof of Lemma \ref{lem:additionalpropertiespsis}}
\label{sec:proofadditionalpropertiespsis}
\begin{proof}[Proof of Lemma \ref{lem:additionalpropertiespsis}]
The first inequality in \eqref{psisplusprecis} follows from \eqref{PhisA0} combined with \eqref{bound:psis0}, and the second one is obtained similarly, using the fact that $\muL$ has total mass $0$. We now turn to proving the inequalities of \eqref{psisplusprecis2}.

\textbf{The case $|x| \leq 10 \l$.}
Since $\|\muL\|_{L^1} \preceq 1$, as observed in \eqref{muLL1}, we have $|\psis|_{\0} \preceq s$, which in particular yields the bound for $|x| \leq 10\l$ as stated in \eqref{psisplusprecis2}. 

\textbf{The case $|x| \geq \lambda /2$.}
For $|x| \geq \lambda/2$, we may combine \eqref{psisplusprecis} with the estimates on $\muL$ as in \eqref{muL0}, and we obtain
$$
|\psis(x)| \preceq  s \frac{\l}{\lambda^{3/2}} \sqrt{\lambda + 1 - |x|}, 
$$
as stated in \eqref{psisplusprecis2}. 

\textbf{The case $10 \l \leq |x| \leq \lambda /2$.}
Finally, let us assume that $x$ is in $[10 \l, \lambda/2]$ (the case $x \in [-\lambda/2, -10\l]$ being, of course, similar). We may write
$$
\int_{-\lambda}^{\Phis(x)} \muL(t) dt = \int_{-\lambda}^{-\Phis(x)} \muL(t) dt + \int_{-\Phis(x)}^{\Phis(x)} \muL(t) dt.
$$
Using \eqref{muL0}, we see can write
\begin{equation}
\label{intlambdaPhis}
\int_{-\lambda}^{-\Phis(x)} \muL(t) dt \preceq \frac{\l^{3/2}}{\lambda^{3/2}} + \frac{\l}{\lambda} + \frac{\l}{\Phis(x)},
\end{equation}
and the dominant term is the last one. Next we write, for $|t|$ in $[10\l, \lambda/2]$,
$$
\muL(t) = \frac{1}{\lambda} \HLP(t) + \Oun\left( \frac{t^2}{\lambda^3} \right) |\HLP(t)|,
$$
and we use Lemma \ref{lem:integraledeH}. First, we apply \eqref{integraledeH} with $a = -\Phis(x)$ and $b = \Phis(x)$, we obtain
\begin{equation}
\label{appliintdeH}
\frac{1}{\lambda} \int_{-\Phis(x)}^{\Phis(x)} \HLP(t) dt \preceq \frac{\l}{\Phis(x)}.
\end{equation}
Secondly, we use \eqref{integraledeyyH} to get
$$
\frac{1}{\lambda^3} \int_{-\Phis(x)}^{\Phis(x)} t^2|\HLP(t)| dt \preceq \frac{\l \Phis(x)}{\lambda^2}. 
$$
The term in \eqref{appliintdeH} is the dominant one. Combining it with a similar one in \eqref{intlambdaPhis}, and since we know that $|\Phis(x) - x| \leq 1$, it yields, as desired
$$
|\psi(x)| \preceq \frac{\l}{|x|}.
$$
\end{proof}

\subsection{Proof of Proposition \ref{prop:mainterms}}
\label{sec:proofpropmainterms}
We extend the notation of \eqref{def:Vdex} as follows: if $g$ is a function of two variables, we let
$$
|g|_{V(x,y)} := \sup_{a \in V_x, b \in V_y} |g(x,y)|.
$$

We introduce the auxiliary function 
\begin{equation}
\label{def:F}
\Ff(x,y) := - \log \left( 1 + \tri(x,y) \right),
\end{equation}
so that, in view of definition \eqref{def:MainComp}, we have
$$
\Main(\eta) = \iint_{\Lala} \Ff(x,y) (d\eta(x)-dx) (d\eta(y)-dy).
$$

\begin{lem}[Energy comparison - the main term]
\label{prop:maincomparison}
We have
\begin{equation}
\label{bound:MainComp}
\Main(\eta) \preceq \MainZ(\eta) + \MainA(\eta) + \MainB(\eta) + \MainC(\eta) + \MainD(\eta),
\end{equation}
where the terms in the right-hand side are defined as
\begin{align*}
\MainZ(\eta) & = \sum_{i = - \lambda}^{\lambda} \sum_{j = -\lambda}^{\lambda} \tD_i \tD_j |\partial^2_{xy} F|_{V(i,j)}, \\
\MainA(\eta) & = \sum_{i=-\lambda}^{\lambda} \sum_{|j| = \lambda - \L}^{\lambda - \L/10} \tD_i \tD_j \frac{|\partial_x \Ff|_{V(i,j)}}{\L}, \\
\MainB(\eta) & = \sum_{|i| = \lambda - \L}^{\lambda - \L/10} \sum_{|j| = \lambda - \L}^{\lambda - \L/10} \tD_i \tD_j \frac{|\Ff|_{V(i,j)}}{\L^2}, \\
\MainC(\eta) & = \left( \L + \left|\Discr_{|x| \in [\lambda - \L/8, \lambda]}\right| \right) \sum_{j=-\lambda}^\lambda \sup_{|x| \in [\lambda - \L/8, \lambda]} |\partial_y \Ff|_{V(x,j)} \tD_j, \\
\MainD(\eta) & = \left( \L + \left|\Discr_{|x| \in [\lambda - \L/8, \lambda]}\right| \right) \sum_{|j| = \lambda - \L}^{\lambda - \L/10} \frac{\sup_{|x| \in [\lambda - \L/8, \lambda]} |\Ff|_{V(x,j)}}{\L} \tD_j. 
\end{align*}
\end{lem}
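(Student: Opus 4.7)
The strategy is to apply Proposition \ref{prop:aprioribounds} twice in succession, once in each variable, to
\begin{equation*}
\Main(\eta) = \iint_{\Lambda \times \Lambda} \Ff(x,y) (d\eta(x) - dx)(d\eta(y) - dy).
\end{equation*}
The obstruction is that $\Ff$ need not vanish at the boundary of $\Lambda$, whereas Proposition \ref{prop:aprioribounds} requires a compactly supported integrand. To circumvent this, I will introduce a smooth cutoff $\chi : \R \to [0,1]$ with $\chi \equiv 1$ on $[-\lambda + \L, \lambda - \L]$, $\chi \equiv 0$ outside $[-\lambda + \L/10, \lambda - \L/10]$, and $|\chi^{(\k)}| \preceq \L^{-\k}$ for $\k = \0, \1$. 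The identity
\begin{equation*}
1 = \chi(x)\chi(y) + \chi(x)(1-\chi(y)) + (1-\chi(x))\chi(y) + (1-\chi(x))(1-\chi(y))
\end{equation*}
then splits $\Main(\eta)$ into a bulk piece and three boundary pieces, treated separately.

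For the bulk piece $\iint \chi(x)\chi(y)\Ff(x,y)(d\eta - dx)(d\eta - dy)$, set $g(x) := \int \chi(x)\chi(y)\Ff(x,y)(d\eta(y) - dy)$, which is compactly supported in $\Lambda$. Applying Proposition \ref{prop:aprioribounds} in $x$ yields $\sum_i |g|_{\1,\Vi}\,\tD_i$, and expanding
\begin{equation*}
g'(x) = \chi'(x) \int \chi(y)\Ff(x,y)(d\eta - dy) + \chi(x) \int \chi(y)\partial_x \Ff(x,y)(d\eta - dy),
\end{equation*}
then applying Proposition \ref{prop:aprioribounds} in $y$ to each inner integral produces four contributions according to whether each derivative lands on $\Ff$ or on a $\chi$-factor. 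Two derivatives on $\Ff$ give the pure mixed partial $\partial^2_{xy}\Ff$ with no $\L^{-1}$ weight and summed over all $(i,j)$, which is $\MainZ$; one derivative on $\chi$ and one on $\Ff$ contributes a factor $\L^{-1}$ from $\chi'$ and restricts the corresponding index to the transition strip $\{|i|$ or $|j|\in[\lambda-\L, \lambda-\L/10]\}$, yielding the two symmetric sub-sums that comprise $\MainA$; finally, both derivatives on $\chi$ contribute $\L^{-2}$ and restrict both indices to the transition strip, yielding $\MainB$.

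For the three remaining pieces containing a factor $1-\chi$, Proposition \ref{prop:aprioribounds} cannot be applied in the boundary variable, since $1-\chi$ does not vanish at $\pm\lambda$. Instead I use the trivial bound
\begin{equation*}
\left| \int h(x)(d\eta - dx) \right| \leq \bigl(|I| + |\Discr_I|\bigr) \|h\|_{\infty, I}
\end{equation*}
on the endpoint strip $I = \{|x| \in [\lambda - \L/8, \lambda]\}$ (enlarged slightly to absorb the $V_x$ neighborhoods) in whichever variable is localized near the boundary. For the mixed pieces $(1-\chi(x))\chi(y)\Ff$ and its symmetric counterpart, combining this with Proposition \ref{prop:aprioribounds} in the remaining variable gives $\MainC$ when the $y$-derivative lands on $\Ff$ (no $\L^{-1}$ factor, $j$ unrestricted) and $\MainD$ when it lands on the $y$-cutoff (extra $\L^{-1}$, $j$ restricted to the transition strip). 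The corner piece $(1-\chi(x))(1-\chi(y))\Ff$ is controlled by the same naive bound in both variables and is absorbed into $\MainC$. Summing the five contributions yields \eqref{bound:MainComp}.

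The argument is entirely elementary; the main obstacle is purely organizational, namely matching the four cutoff combinations and the two places each derivative may land (on $\Ff$ or on a $\chi$-factor) with the five claimed summands, and checking that enlarging by $O(1)$ the $V_i \times V_j$ neighborhoods to absorb the $V_x$ widening is harmless. No estimate on $\Ff$ itself is needed at this stage: the proposition produces a purely structural bound, to be combined in subsequent sections with the explicit estimates on $\psis$ and its derivatives.
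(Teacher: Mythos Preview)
Your overall strategy is exactly the paper's: introduce a cutoff $\chi$, split $\Main$ into bulk and boundary pieces, apply Proposition~\ref{prop:aprioribounds} twice on the bulk to get $\MainZ+\MainA+\MainB$, and combine the trivial mass bound in the boundary variable with Proposition~\ref{prop:aprioribounds} in the other to get $\MainC+\MainD$.

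There is, however, a real gap in your treatment of the corner piece $(1-\chi(x))(1-\chi(y))\Ff$. You assert it is ``absorbed into $\MainC$'', but the double trivial bound gives a quantity of the shape $(\L+|\Discr_I|)^2\sup_{I\times I}|\Ff|$, which is structurally \emph{not} of the form of $\MainC$ (nor of any of the five terms). The paper avoids this entirely by a different choice of cutoff: it takes $\chi\equiv 1$ on $[-\lambda+\L/4,\lambda-\L/4]$ and $\chi\equiv 0$ outside $[-\lambda+\L/8,\lambda-\L/8]$, so that $1-\chi$ is supported in $\{|x|\geq \lambda-\L/4\}$, which is precisely where $\psis$ vanishes (Lemma~\ref{lem:transportmap}). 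Then for $x,y$ both in the support of $1-\chi$ one has $\psis(x)=\psis(y)=0$, hence $\tri(x,y)=0$ and $\Ff(x,y)=0$: the corner piece is identically zero and there is nothing to absorb. With your cutoff ($\chi\equiv 1$ on $\{|x|\leq\lambda-\L\}$), the support of $1-\chi$ includes the region $\{|x|\in[\lambda-\L,\lambda-\L/4]\}$ where $\psis$ need not vanish, so this cancellation is lost.

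This same mismatch in the cutoff parameters also explains why your boundary strip does not line up with the strip $\{|x|\in[\lambda-\L/8,\lambda]\}$ appearing in $\MainC,\MainD$: with your $\chi$, the support of $1-\chi$ is $\{|x|\geq\lambda-\L\}$, not $\{|x|\geq\lambda-\L/8\}$, and the ``enlarged slightly'' caveat cannot bridge an order-$\L$ gap. If you redo the argument with the paper's cutoff (or any cutoff with $1-\chi$ supported inside $\{|x|\geq\lambda-\L/4\}$), both issues disappear simultaneously.
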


\begin{proof}
Let $\chi$ be a cut-off function equal to $1$ on $[-\lambda + \L/4, \lambda - \L/4]$, vanishing outside $[-\lambda + \L/8, \lambda - \L/8]$, bounded by $1$ and whose derivative is bounded by $\Oun\left(\frac{1}{\L}\right)$.
We may write:
\begin{multline}
\label{MainComp1}
\Main(\eta) = \iint_{\Lala} \chi(x) \Ff(x,y) \chi(y) (d\eta - dx) (d\eta -dy) \\
+ 2 \iint_{\Lala} \left(1-\chi(x)\right) \Ff(x,y) \chi(y)  (d\eta - dx) (d\eta -dy)\\
+ \iint_{\Lala} \left(1 - \chi(x)\right) \Ff(x,y) \left(1- \chi(y) \right) (d\eta - dx) (d\eta -dy).
\end{multline}
The last term in the right-hand side vanishes, because $\psis$ vanishes on the support of $1-\chi$, and so $\tri(x,y) = 0$, and thus $\Ff(x,y) = 0$, when both $x$ and $y$ belong to the support of $1 - \chi$. We now study the two first terms in the right-hand side of \eqref{MainComp1} separately.

\begin{claim}[The “$\chi$,$\chi$” term]
\label{claim:chiFchi}
We claim that:
\begin{multline}
\label{chiFchi}
\iint_{\Lala} \chi(x) \Ff(x,y) \chi(y) (d\eta - dx) (d\eta -dy) \\
\preceq \sum_{i = - \lambda}^{\lambda} \sum_{j = -\lambda}^{\lambda} \tD_i \tD_j  |\partial^2_{xy} \Ff|_{V(i,j)} 
+ \sum_{i=-\lambda}^{\lambda} \sum_{|j| = \lambda - \L}^{\lambda - \L/10} \tD_i \tD_j \frac{|\partial_x \Ff|_{V(i,j)}}{\L}
+ \sum_{|i| = \lambda - \L}^{\lambda - \L/10} \sum_{|j| = \lambda - \L}^{\lambda - \L/10} \tD_i \tD_j \frac{|\Ff|_{V(i,j)}}{\L^2}.
\end{multline}
\end{claim}
\begin{proof}[Proof of Claim \ref{claim:chiFchi}]
For a fixed configuration $\eta$, and $x$ in $(-\lambda, \lambda)$, let us define
\begin{equation}
\label{def:Geta}
\Gg_{\eta}(x) := \int \chi(y) \Ff(x,y) (d\eta(y) - dy).
\end{equation}
We have
\begin{equation}
\label{chichiGeta}
\iint_{\Lala} \chi(x) \Ff(x,y) \chi(y) (d\eta - dx) (d\eta -dy)  = \int \Gg_{\eta}(x) \chi(x) (d\eta - dx).
\end{equation}
By construction, the map $x \mapsto \Gg_{\eta}(x) \chi(x)$ is compactly supported. Using the \textit{a priori} bounds of Proposition \ref{prop:aprioribounds}, 
we obtain
\begin{equation*}
\iint_{\Lala} \chi(x) \Ff(x,y) \chi(y) (d\eta - dx) (d\eta -dy) 
\preceq \sum_{i = -\lambda}^{\lambda} \left| \Gg_{\eta} \chi \right|_{\1, \Vi} \tD_i.
\end{equation*}
We have of course, differentiating a product, 
$$
\left| \Gg_{\eta} \chi \right|_{\1, \Vi} \preceq  \left| \Gg_{\eta} \right|_{\1, \Vi} \left| \chi \right|_{\0, \Vi} +  \left| \Gg_{\eta} \right|_{\0, \Vi} \left| \chi \right|_{\1, \Vi}
$$
and we use the fact that $\chi$ is bounded by $1$, and that $\chi'(x)$ is bounded by $\L^{-1}$ and supported on $\{|x| \in [\lambda - \L/4, \lambda - \L/8]\}$. We obtain
\begin{equation}
\label{Getabound}
\int_{\Lambda} \Gg_{\eta}(x) \chi(x) (d\eta - dx) \preceq \sum_{i = -\lambda}^{\lambda} |\Gg_{\eta}|_{\1, \Vi} \tD_i + \sum_{|i| = \lambda - \L}^{\lambda - \L/2}\frac{|\Gg_{\eta}|_{\0, \Vi}}{\L} \tD_i.
\end{equation}

Let us now study $\Ggeta$ itself. We have
\begin{equation*}
\Ggeta(x) = \int \chi(y) \Ff(x,y) (d\eta(y) - dy), \quad \Ggeta'(x) = \int \chi(y) \partial_x \Ff(x,y) (d\eta(y) - dy).
\end{equation*}
We have of course, differentiating with respect to $y$ for $x$ fixed
$$
|\chi \Ff(x,\cdot)|_{1, \Vj} \preceq |\partial_y \Ff|_{V(x,j)} |\chi|_{0, \Vj}  + |\Ff|_{V(x,j)} |\chi|_{1, \Vj}, 
$$
and similarly
$$
|\chi \partial_x \Ff(x,\cdot)|_{1, \Vj} \preceq |\partial^2_{yx} \Ff|_{V(x,j)} |\chi|_{0, \Vj}  + |\partial_x \Ff|_{V(x,j)} |\chi|_{1, \Vj}.
$$
We use the \textit{a priori bounds} of Proposition \ref{prop:aprioribounds} again, and use again the fact that $\chi$ is bounded by $1$, that $\chi'(y)$ is zero outside $\{|y| \in [\lambda - \L/4, \lambda - \L/8]\}$ and bounded by $\L^{-1}$. We obtain
\begin{align}
\label{Geta} \Ggeta(x) & \preceq \sum_{j=-\lambda}^\lambda |\partial_y \Ff|_{V(x,j)} \tD_j + \sum_{|j| = \lambda - \L}^{\lambda - \L/10} \frac{|\Ff|_{V(x,j)}}{\L} \tD_j, \\
\label{Gpeta} \Ggeta'(x) & \preceq \sum_{j=-\lambda}^\lambda |\partial^2_{xy} \Ff|_{V(x,j)} \tD_j + \sum_{|j| = \lambda - \L}^{\lambda - \L/10} \frac{|\partial_x \Ff|_{V(x,j)}}{\L} \tD_j.
\end{align}

Combining \eqref{chichiGeta}, \eqref{Getabound} and \eqref{Geta}, \eqref{Gpeta}, we obtain the expression \eqref{chiFchi}.
\end{proof}

\begin{claim}[The “$\chi$,$(1-\chi)$” term]
\label{claim:chi1moinschi}
We claim that:
\begin{multline}
\label{chi1moinschi}
\iint_{\Lala} \left(1-\chi(x)\right) \Ff(x,y) \chi(y) (d\eta - dx) (d\eta -dy) \\ 
\preceq \left( \L + \left|\Discr_{|x| \in [\lambda - \L/8, \lambda]}\right| \right) \sum_{j=-\lambda}^\lambda \sup_{|x| \in [\lambda - \L/8, \lambda]} |\partial_y \Ff|_{V(x,j)} \tD_j 
\\
+ \left( \L + \left|\Discr_{|x| \in [\lambda - \L/8, \lambda]}\right| \right)  \sum_{|j| = \lambda - \L}^{\lambda - \L/10} \frac{\sup_{|x| \in [\lambda - \L/8, \lambda]} |\Ff|_{V(x,j)}}{\L} \tD_j.  
\end{multline}
\end{claim}
\begin{proof}[Proof of Claim \ref{claim:chi1moinschi}]
With the notation $\Ggeta$ of \eqref{def:Geta}, we write
\begin{equation}
\label{chi1moinschiA}
\iint_{\Lala} \left(1-\chi(x)\right) \Ff(x,y) \chi(y) (d\eta - dx) (d\eta -dy) = \int_{\Lambda} \left(1-\chi(x)\right) \Ggeta(x) (d\eta - dx).
\end{equation}

By construction, $1- \chi(x)$ is supported on 
$\{|x| \in [\lambda - \L/8, \lambda] \}$, so we have, using a rough bound on $\Ggeta$ and the mass of $d\eta - dx$ in $\{|x| \in [\lambda - \L/8, \lambda]\}$,
\begin{equation}
\label{chi1moinschiB}
\int_{\Lambda} \left(1-\chi(x)\right) \Ggeta(x) (d\eta - dx) \preceq \left( \L + \left|\Discr_{|x| \in [\lambda - \L/8, \lambda]}\right| \right) \sup_{|x| \in [\lambda - \L/8, \lambda]} \left| \Ggeta(x)\right|.
\end{equation}
Using \eqref{Geta} in \eqref{chi1moinschiB}, we obtain
\begin{multline*}
\int_{\Lambda} \left(1-\chi(x)\right) \Ggeta(x) (d\eta - dx)
\preceq \left( \L + \left|\Discr_{|x| \in [\lambda - \L/8, \lambda]}\right| \right) 
\sum_{j=-\lambda}^\lambda \sup_{|x| \in [\lambda - \L/8, \lambda]} |\partial_y \Ff|_{V(x,j)} \tD_j 
\\
+
\left( \L + \left|\Discr_{|x| \in [\lambda - \L/8, \lambda]}\right| \right)  \sum_{|j| = \lambda - \L}^{\lambda - \L/10} \frac{\sup_{|x| \in [\lambda - \L/8, \lambda]} |\Ff|_{V(x,j)}}{\L} \tD_j , 
\end{multline*}
which yields \eqref{chi1moinschi}.
\end{proof}

The estimate \eqref{bound:MainComp} is simply the combination of \eqref{MainComp1} and the two claims above.
\end{proof}

\begin{proof}[Proof of Proposition \ref{prop:mainterms}]
We recall that 
$$
\Ff(x,y) = - \log \left(1 + \frac{\psis(y)-\psis(x)}{y-x} \right).
$$
\begin{claim}[The magnitude of $\Ff$ and its derivatives]
\label{claim:magnitude}
We have
\begin{align}
\label{magni3} \Ff(x,y) & \preceq \frac{|\psis(x)| + |\psis(y)|}{|x-y|}   \\
\label{magni4}  \Ff(x,y)  & \preceq \sup_{t \in [x,y]} |\psis^{(\1)}(t)|   \\
\label{magni5} \partial_x \Ff(x,y) & \preceq \frac{|\psis'(x)|}{|y-x|} + \frac{|\psis(x)| + |\psis(y)|}{(y-x)^2}   \\
\label{magni6}\partial_x \Ff(x,y) & \preceq \sup_{t \in [x,y]} |\psis^{(\2)}(t)|.\\
\label{magni1}
\partial^2_{xy} \Ff(x,y) & \preceq   \frac{|\psis'(x)|}{(x-y)^2} + \frac{|\psis'(y)|}{(x-y)^2} + \frac{|\psis(x)|}{|x-y|^3} + \frac{|\psis(y)|}{|x-y|^3}, \\ 
\label{magni2} \partial^2_{xy} \Ff(x,y) & \preceq  \sup_{t \in [x,y]} |\psis^{(3)}(t)| + \sup_{t \in [x,y]} |\psis^{(2)}(t)|^2.
\end{align}
\end{claim}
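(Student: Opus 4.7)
The plan is to reduce everything to bounds on $\tri$ and its partial derivatives. First, by the mean value theorem, $|\tri(x,y)| \leq \sup_{t \in [x,y]} |\psis'(t)| \leq |\psis|_{\1}$, and the combination of \eqref{bound:psis1} with the choice of $\smax$ in \eqref{def:smax} ensures that $|\psis|_{\1}$ is small enough that $1 + \tri(x,y)$ stays bounded above and below by positive universal constants. Consequently, differentiating $\Ff = -\log(1+\tri)$ produces only bounded factors $1/(1+\tri)$ and $1/(1+\tri)^2$, so
\[
|\Ff(x,y)| \preceq |\tri(x,y)|, \quad |\partial_x \Ff| \preceq |\partial_x \tri|, \quad |\partial^2_{xy} \Ff| \preceq |\partial^2_{xy} \tri| + |\partial_x \tri|\,|\partial_y \tri|.
\]
It remains to bound $\tri$, $\partial_x \tri$ and $\partial^2_{xy} \tri$ in two complementary ways.

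For the first family \eqref{magni3}, \eqref{magni5}, \eqref{magni1}, I would differentiate the definition $\tri(x,y) = (\psis(y)-\psis(x))/(y-x)$ directly, giving
\[
\partial_x \tri = -\frac{\psis'(x)}{y-x} + \frac{\psis(y)-\psis(x)}{(y-x)^2}, \qquad \partial^2_{xy} \tri = \frac{\psis'(x)+\psis'(y)}{(y-x)^2} - \frac{2(\psis(y)-\psis(x))}{(y-x)^3},
\]
and then apply the triangle inequality $|\psis(y)-\psis(x)| \leq |\psis(x)|+|\psis(y)|$ to every numerator. The cross term $|\partial_x \tri|\,|\partial_y \tri|$ appearing in the expansion of $\partial^2_{xy}\Ff$ is absorbed into the right-hand side of \eqref{magni1} because each factor is uniformly bounded while the other is already of the required form.

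For the second family \eqref{magni4}, \eqref{magni6}, \eqref{magni2}, I would use Taylor-remainder identities. The mean value theorem yields \eqref{magni4} directly. For \eqref{magni6}, rewriting $\psis(y)-\psis(x)-\psis'(x)(y-x) = \int_x^y (y-s)\,\psis''(s)\,ds$ via Fubini gives
\[
\partial_x \tri(x,y) = \frac{1}{(y-x)^2} \int_x^y (y-s)\,\psis''(s)\,ds,
\]
so $|\partial_x \tri| \leq \tfrac{1}{2}\sup_{[x,y]} |\psis''|$. For $\partial^2_{xy} \tri$, I would first use Taylor's formula to rewrite $(y-x)(\psis'(x)+\psis'(y)) - 2(\psis(y)-\psis(x))$ as $\int_x^y (2s-(x+y))\psis''(s)\,ds$, then integrate by parts using $2s-(x+y) = -\tfrac{d}{ds}[(s-x)(y-s)]$ and the vanishing of $(s-x)(y-s)$ at both endpoints. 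This yields the key representation
\[
\partial^2_{xy} \tri(x,y) = \frac{1}{(y-x)^3} \int_x^y (s-x)(y-s)\,\psis'''(s)\,ds,
\]
whence $|\partial^2_{xy} \tri| \preceq \sup_{[x,y]} |\psis'''|$. Combining with $|\partial_x \tri|\,|\partial_y \tri| \preceq (\sup|\psis''|)^2$ completes \eqref{magni2}.

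The main technical step is the integration-by-parts identity that produces the positive-kernel representation of $\partial^2_{xy} \tri$; once it is in place the rest is routine, and the overall structure of the argument is that the "smooth" bounds correspond to integrating the highest derivative of $\psis$ against a non-negative kernel supported on $[x,y]$, while the "divided-difference" bounds come from direct differentiation combined with the triangle inequality.
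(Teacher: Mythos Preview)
Your approach is essentially the paper's, and your integration-by-parts representation $\partial^2_{xy}\tri = (y-x)^{-3}\int_x^y (s-x)(y-s)\,\psis'''(s)\,ds$ makes explicit what the paper dismisses as ``an elementary computation''.

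There is, however, one genuine gap in your absorption of the cross term $|\partial_x\tri|\,|\partial_y\tri|$ into \eqref{magni1}. If ``each factor is uniformly bounded'' means bounded by a constant independent of $x,y$, then bounding one factor by a constant leaves you with $|\partial_y\tri| \preceq \frac{|\psis'(y)|}{|y-x|} + \frac{|\psis(x)|+|\psis(y)|}{(y-x)^2}$, which carries one power of $|y-x|$ too few in each denominator compared with the right-hand side of \eqref{magni1}. The fix is to keep $\tri$ in the numerator: write $\partial_x\tri = \frac{-\psis'(x)+\tri}{y-x}$ and $\partial_y\tri = \frac{\psis'(y)-\tri}{y-x}$, so that
\[
|\partial_x\tri|\,|\partial_y\tri| \leq \frac{(|\psis'(x)|+|\tri|)(|\psis'(y)|+|\tri|)}{(y-x)^2}.
\]
Each factor in the numerator is bounded (both $|\psis'|$ and $|\tri|$ are $\preceq 1$), so the product is $\preceq \frac{|\psis'(x)|+|\psis'(y)|+|\tri|}{(y-x)^2}$, and $\frac{|\tri|}{(y-x)^2} \leq \frac{|\psis(x)|+|\psis(y)|}{|y-x|^3}$ gives exactly the form of \eqref{magni1}. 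This is, in effect, the algebra the paper carries out to reach \eqref{p2xyFB}.
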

\begin{proof}[Proof of Claim \ref{claim:magnitude}]
The bounds \eqref{magni3}, \eqref{magni4} are straightforward.

We then perform the following simple computation
\begin{align}
\label{pxF} \partial_x \Ff & = \frac{\partial_x \tri}{1 + \tri},\\
\label{p2xyFA}
\partial^2_{xy} \Ff & = \frac{ - \left(\partial^2_{xy} \tri\right) (1 + \tri) + \left(\partial_x \tri\right) \left(\partial_y \tri\right)}{(1 + \tri)^2}.
\end{align}

Moreover, we have
\begin{equation}
\label{partialx} \partial_x \tri(x,y) = \frac{- \psis'(x)}{y-x} + \frac{\psis(y) - \psis(x)}{(y-x)^2}, \quad  \partial_y \tri(x,y)  = \frac{ \psis'(y)}{y-x} - \frac{\psis(y) - \psis(x)}{(y-x)^2}.
\end{equation}
\begin{equation*}
\label{partialxy} \partial^2_{xy} \tri(x,y)  = \frac{\psis'(x) + \psis'(y)}{(y-x)^2} - 2 \frac{\psis(y) - \psis(x)}{(y-x)^3}.
\end{equation*}

From \eqref{pxF} and the fact that $1 + \tri$ is bounded below by a positive constant (because $|\tri|_{\0}$ is bounded by $|\psis'|_{\0}$, itself bounded by $s|\muL|_{\0}$, and $\smax$ is chosen as in \eqref{def:smax}) we see that
\begin{equation}
\label{usefultooo}
\partial_x \Ff \preceq \partial_x \tri, 
\end{equation}
and using \eqref{partialx} we obtain \eqref{magni5}.

Using again the fact that $1 + \tri$ is bounded below by a positive constant, we get
\begin{equation}
\label{usefultoo}
\partial^2_{xy} \Ff \preceq   - \left(\partial^2_{xy} \tri\right) (1 + \tri) + \left(\partial_x \tri\right) \left(\partial_y \tri\right),
\end{equation}
and after some algebra, we obtain
\begin{equation}
\label{p2xyFB}
\partial^2_{xy} \Ff(x,y) \preceq \frac{\psis'(x)}{(x-y)^2} + \frac{\psis'(y)}{(x-y)^2} - \frac{\psis'(x) \psis'(y)}{(x-y)^2} + \frac{\tri^2(x,y)}{(x-y)^2} - \frac{2\tri(x,y)}{(x-y)^2}.
\end{equation}
Since $\psis'$ is bounded, and so is $\tri(x,y) = \frac{\psis(x) - \psis(y)}{x-y}$, we may certainly write
\begin{equation*}
\partial^2_{xy} \Ff(x,y) \preceq \frac{|\psis'(x)|}{(x-y)^2} + \frac{|\psis'(y)|}{(x-y)^2} + \frac{|\psis(x)|}{|x-y|^3} + \frac{|\psis(y)|}{|x-y|^3},
\end{equation*}
which is \eqref{magni1}.

It remains to prove \eqref{magni6}, \eqref{magni2}. Using the identity 
$$
\tri(x,y) = \frac{1}{y-x} \int_{x}^y \psis'(s) ds,
$$ 
an elementary computation yields
\begin{equation}
\label{deriveesup}
\partial_x \tri(x,y)  \preceq \sup_{t \in [x,y]} \left|\psis^{(\2)}(t)\right|, \quad \partial_x \tri(x,y)  \preceq \sup_{t \in [x,y]} \left|\psis^{(\2)}(t)\right|, 
\partial^2_{xy} \tri(x,y)| \preceq \sup_{t \in [x,y]} \left|\psis^{(\3)}(t)\right|,
\end{equation}
We may then derive \eqref{magni6} from \eqref{usefultooo} and \eqref{deriveesup} and \eqref{magni2} from \eqref{usefultoo} and \eqref{deriveesup}.
\end{proof}

\paragraph{\textbf{General strategy, and convention for the proof.}}
We estimate the expectations of the all terms in Proposition \ref{prop:mainterms}. They involve (double) sums with coefficients of the type
$$
\tD_i \tD_j A(i,j),
$$
where $A(i,j)$ is a non-random quantity related to $\Ff$ or one of its derivatives. We will use the estimates of Claim \ref{claim:magnitude} to control the terms $A(i,j)$. Typically, the estimates \eqref{magni3}, \eqref{magni5}, \eqref{magni1} will be used when $i$ and $j$ are far away, and the estimates \eqref{magni4}, \eqref{magni6}, \eqref{magni2} will be used for $i$ and $j$ close to each other.

The expectation of $\tD_i \tD_j$ can be controlled using the discrepancy estimates \eqref{discrestimateA} and \eqref{discrestimateB}. Using Cauchy-Schwarz's inequality we see that\footnote{It is easy to check that the fact that, strictly speaking, the inequality is not true for $i= 0$ or $j=0$ is irrelevant.}
\begin{equation}
\label{CSmoyen}
\Esp\left[ \tD_i \tD_j \right] \preceq \sqrt{|i|} \sqrt{|j|}, 
\end{equation}
and we will replace all occurrences of $\tD_i$, resp. $\tD_j$ by $\sqrt{|i|}$, resp. $\sqrt{|j|}$. For most estimates, this is enough, and we obtain terms that are $\pto$.
A couple of terms are seen this way to be only bounded, but perhaps not vanishing, as $\lambda \to \infty, \l \to \infty$, which we denote by $O(1)$. For these terms, we use \eqref{discrestimateB} instead of \eqref{discrestimateA}, and write that
\begin{equation}
\label{mieuxCS}
\Esp\left[ \tD_i \tD_j \right] \preceq o_{|i| \to \infty} \left(\sqrt{|i|}\right) o_{|j| \to \infty} \left(\sqrt{|j|}\right),
\end{equation}
which allows us to improve the bound to $\pto$. 

\paragraph{\textbf{The term $\MainZ$.}}
We recall that
$$
\MainZ(\eta) = \sum_{i = - \lambda}^{\lambda} \sum_{j = -\lambda}^{\lambda} \tD_i \tD_j |\partial^2_{xy} F|_{V(i,j)}.
$$
Using symmetries, it is enough to study
$$
\sum_{i=0}^{\lambda} \sum_{i \leq |j| \leq \lambda} \tD_i \tD_j |\partial^2_{xy} F|_{V(i,j)}.
$$

\begin{enumerate}
\item Let us start with the region $0 < x < 2\l$ and $x < y < 4\l$. 
We want to prove that
$$
\Esp \left[ \sum_{i=0}^{2\l} \sum_{j = i}^{4\l} \tD_i \tD_j |\partial^2_{xy} F|_{V(i,j)} \right] = s \pto.
$$ 
In this case, since $i,j$ are close, we use \eqref{magni2} to control $|\partial^2_{xy} F|_{V(i,j)}$. By \eqref{bound:psis2}, \eqref{bound:psi3}, we know that $\psis^{(\2)}$ is controlled by $s \muL^{(\1)}$, and that $\psis^{(\3)}$ is controlled by $s \muL^{(\2)}$, and we refer to the bounds \eqref{muL1}, \eqref{muL2} to see that
$$
\sup_{|t| \leq 4\l} |\psis^{(\2)}(t)|^2 \preceq \frac{s^2}{\l^4} \quad \sup_{|t| \leq 4\l} |\psis^{(\3)}(t)| \preceq \frac{s}{\l^3},
$$
the dominant term is obviously the second one, so we may simply study
$$
\sum_{i=0}^{2\l} \sum_{j = i}^{4\l} \tD_i \tD_j \frac{s}{\l^3}.
$$
Taking the expectation and using \eqref{CSmoyen}, we are left with
$$
\sum_{i=0}^{2\l} \sum_{j = i}^{4\l} \sqrt{i} \sqrt{j} \frac{s}{\l^3} \preceq s \l^2 \times \l \times \frac{s}{\l^3} = s O(1).
$$
This is an example where the bound \eqref{CSmoyen} is not sufficient, and we replace it by \eqref{mieuxCS}. By well-known results on divergent series, we have
$$
\sum_{i=0}^{2\l} \sum_{j = i}^{4\l} o_{i}\left(\sqrt{i}\right) o_{j}\left(\sqrt{j}\right) = o_{l \to \infty} (\l^3), 
$$
and thus we have, as desired,
$$
\Esp \left[ \sum_{i=0}^{2\l} \sum_{j = i}^{4\l} \tD_i \tD_j \frac{1}{\l^3} \right] = s \pto.
$$
$$
\star \star \star
$$

\item For $2\l < x < \frac{\lambda}{2}$, $x < y < \frac{4}{3} x$. We study the expectation of
$$
\sum_{i=2\l}^{\lambda/2} \sum_{j = i}^{\qt i} \tD_i \tD_j |\partial^2_{xy} F|_{V(i,j)}.
$$
We use \eqref{magni2} to control $|\partial^2_{xy} F|_{V(i,j)}$. We control again $\psis^{(\2)}$ by $s\muL^{(\1)}$ (and read \eqref{muL1}), and $\psis^{(\3)}$ by $s\muL^{(\2)}$ (and read \eqref{muL2}), we get
$$
\sup_{t \in [x,y]} \left|\psis^{(\2)}(t) \right|^2 \preceq s^2 \frac{\l^2}{x^6}, \sup_{t \in [x,y]} \left| \psis^{(\3)}(t) \right| \preceq s\frac{\l}{x^4}.
$$
Since $x > \l$, the dominant term is the second one.

Finally, we take the expectation, use the discrepancy estimates and replace $\tD_i \tD_j$ by $\sqrt{i} \sqrt{j}$. Comparing the sum with an integral, we are left to study
$$
s \int_{2\l}^{\lambda/2} \int_{x}^{\frac{4}{3}x} \sqrt{x} \sqrt{y} \frac{\l}{x^4} dx dy.
$$
Replacing $\sqrt{y}$ by $\sqrt{x}$ (since $x < y < \frac{4}{3} x$), it yields
$$
s\int_{2\l}^{\lambda/2} \int_{x}^{\frac{4}{3}x} \sqrt{x} \sqrt{x}  \frac{\l}{x^4} dy dx = s\int_{2\l}^{\lambda/2} \frac{\l}{x^2} = sO(1).
$$
We are again in a case where \eqref{CSmoyen} is not enough and must be replaced by the discrepancy estimates \eqref{mieuxCS}, which improves the bound from $s O(1)$ to $s \pto$. 
$$
\star \star \star
$$

\item For $2\l < x < \frac{\lambda}{2}$, $\frac{4}{3} x < y$. We study the expectation of 
$$
\sum_{i=2\l}^{\lambda/2} \sum_{j = \qt i }^{\lambda} \tD_i \tD_j |\partial^2_{xy} F|_{V(i,j)}.
$$
Since $i$ and $j$ are far from each other, we use \eqref{magni1} to control $|\partial^2_{xy} F|_{V(i,j)}$. Taking the expectations, using \eqref{CSmoyen} and
comparing the sum with an integral, we are left to study
$$
\int_{2\l}^{\lambda/2} \sqrt{x} \int_{\frac{4}{3}x}^{\lambda} \sqrt{y} \left( \frac{|\psis'(x)|}{(x-y)^2} + \frac{|\psis'(y)|}{(x-y)^2} + \frac{|\psis(x)|}{(x-y)^3} + \frac{|\psis(y)|}{(x-y)^3} \right) dy.
$$

Since $\qt x < y$ we can replace $x-y$ by $y$, and we split the integrand in four parts.
\begin{enumerate}
\item Using \eqref{bound:psis1} to control $\psis'$ by $s\muL$, and \eqref{muL0} to control $\muL$, we get
$$
\int_{2\l}^{\lambda/2} \sqrt{x} |\psis'(x)| \int_{\frac{4}{3}x}^{\lambda} \frac{\sqrt{y}}{y^2} dy \preceq s \int_{2\l}^{\lambda /2} |\muL(x)| dx = s O(1).
$$
Using again \eqref{mieuxCS} instead of \eqref{CSmoyen}, we may replace $\sqrt{x}, \sqrt{y}$ by $o_x(\sqrt{x}), o_y(\sqrt{y})$, and we obtain in fact $s\pto$.

\item Using \eqref{bound:psis1} to control $\psis'$ by $s \muL$, and \eqref{muL0} to control $\muL$, and splitting the domain of integration in two parts, we see that
\begin{equation*}
\int_{2\l}^{\lambda/2} \sqrt{x} dx  \int_{\frac{4}{3}x}^{\lambda} \frac{\sqrt{y} |\psis'(y)|}{y^2} dy \preceq s \int_{2\l}^{\lambda /2} \sqrt{x} dx \left[\int_{\frac{4}{3}x}^{\lambda/2} \frac{\sqrt{y} \l}{y^{2} y^2} dy + \int_{\lambda/2}^{\lambda} \frac{\l \sqrt{y}}{\lambda^{3/2} y^2 \sqrt{\lambda - y}} dy  \right]
\end{equation*}
The first contribution is
$$
s\int_{2\l}^{\lambda/2} \sqrt{x} \int_{\frac{4}{3}x}^{\lambda/2} \frac{\sqrt{y} \l}{y^{2} y^2} dy dx = s\int_{2\l}^{\lambda/2} \sqrt{x} \int_{\frac{4}{3}x}^{\lambda/2} \frac{\l}{y^{7/2}} dy dx = s \int_{2\l}^{\lambda/2} \sqrt{x} \frac{\l}{x^{5/2}} dx = sO(1),
$$
and for the second one, since $y \in [\lambda/2,\lambda]$ we may replace $y$ by $\lambda$ and compute
$$
s \int_{2\l}^{\lambda/2} \sqrt{x} \int_{\lambda/2}^{\lambda} \frac{\l}{\lambda^{3} \sqrt{\lambda-y}} dy dx = s O(1).
$$
Again, this can be improved to $s \pto$.

\item Using \eqref{psisplusprecis2} to control $\psis(x)$, we have
$$
\int_{2\l}^{\lambda/2} \sqrt{x} |\psis(x)| \int_{\frac{4}{3}x}^{\lambda} \frac{\sqrt{y}}{y^{3}} dy  \preceq s \int_{2\l}^{\lambda/2} \sqrt{x} \frac{\l}{|x|} \frac{1}{x^{3/2}} dx = s O(1).
$$
Which can be improved to $\pto$.

\item Using \eqref{psisplusprecis2} to control $\psis(y)$ and splitting the domain of integration (on $y$) in two parts, we have
\begin{multline*}
\int_{2\l}^{\lambda/2} \sqrt{x}  dx \int_{\frac{4}{3}x}^{\lambda} \frac{\sqrt{y} |\psis(y)|}{y^{3}} dy \preceq \int_{2\l}^{\lambda/2} \sqrt{x}  dx \left[ \int_{\frac{4}{3}x}^{\lambda/2} \frac{\l}{y^{7/2}} dy + \int_{\lambda/2}^{\lambda} \frac{\l}{\lambda^{3/2}} \frac{\sqrt{\lambda - y}}{\lambda^{7/2}} dy \right] 
\\
\preceq s \int_{2\l}^{\lambda/2} \sqrt{x}  \left[ \frac{\l}{x^{5/2}} + \frac{\l}{\lambda^{5/2}} \right] dx = s O(1).
\end{multline*}
Similarly, this can be improved to $s \pto$.
\end{enumerate}
$$
\star \star \star
$$
\item For $\frac{\lambda}{2} < x < \lambda - \L$ and $y-x < \frac{1}{2} (\lambda - x)$.
We study the expectation of 
$$
\sum_{i=\lambda/2}^{\lambda - \L} \sum_{0 <j-i < \frac{1}{2} (\lambda - i)} \tD_i \tD_j |\partial^2_{xy} F|_{V(i,j)}.
$$
Since $i,j$ are close we use \eqref{magni2} to control $|\partial^2_{xy} F|_{V(i,j)}$. We see (the now usual way) that
$$
\sup_{t \in [x,y]} \left| \psis^{(2)}(t) \right|^2 \preceq s^2 \frac{\l^2}{\lambda^{3} (\lambda - x)^{3}}, \quad \sup_{t \in [x,y]} \left| \psis^{(3)}(t) \right| \preceq s \frac{\l}{\lambda^{3/2} (\lambda - x)^{5/2}},
$$
and the dominant term is the second one. 

We take the expectation, we use the discrepancy estimates, we compare the sum to an integral, we replace $\sqrt{x}, \sqrt{y}$ by $\sqrt{\lambda}$, and we are left to compute
\begin{equation*}
s \int_{\lambda/2}^{\lambda - \L} \lambda dx \int_{0 < y-x < \frac{1}{2} (\lambda - x)} \frac{\l}{\lambda^{3/2} (\lambda - x)^{5/2}}  dy \preceq s \int_{\lambda/2}^{\lambda - \L} \frac{\l}{\sqrt{\lambda}} \frac{1}{(\lambda -x)^{3/2}} \\
 \preceq \frac{\l}{\sqrt{\lambda}\sqrt{\L}} = s \pto.
\end{equation*}
$$
\star \star \star
$$

\item For $\frac{\lambda}{2} < x < \lambda - \L$ and $y-x > \frac{1}{2} (\lambda - x)$. We use \eqref{magni1} to control $|\partial^2_{xy} F|_{V(i,j)}$. We take the expectation, and compare the sum to a series, we are left to study
$$
\int_{\lambda/2}^{\lambda} dx \sqrt{x} \int_{y-x > \frac{\lambda -x}{2}} \sqrt{y} \left( \frac{|\psis'(x)|}{(x-y)^2} + \frac{|\psis'(y)|}{(x-y)^2} + \frac{|\psis(x)|}{(x-y)^3} + \frac{|\psis(y)|}{(x-y)^3} \right) dy.
$$
We replace $\sqrt{x}, \sqrt{y}$ by $\sqrt{\lambda}$ and split the integrand in four parts.
\begin{enumerate}
\item Using \eqref{bound:psis1} to control $\psis'(x)$ by $s\muL(x)$, and \eqref{muL0}, we have
$$
|\psis'(x)| \preceq s\frac{\l}{\lambda^{3/2} \sqrt{\lambda-x}}, 
$$
and thus consider
\begin{multline*}
\int_{\lambda/2}^{\lambda-\L} dx \sqrt{\lambda} \int_{y-x > \frac{\lambda -x}{2}} \sqrt{\lambda} \frac{|\psis'(x)|}{(y-x)^2} \preceq s \int_{\lambda/2}^{\lambda} dx \lambda \frac{\l}{\lambda^{3/2} \sqrt{\lambda-x}} \int_{y-x > \frac{\lambda -x}{2}} \frac{1}{(y-x)^2} dy \\
\preceq s \int_{\lambda/2}^{\lambda-\L} dx \lambda \frac{\l}{\lambda^{3/2} \sqrt{\lambda - x}} \frac{1}{\lambda-x} \preceq s \frac{\l}{\sqrt{\lambda} \sqrt{\L}} = s \pto.
\end{multline*}
\item Using \eqref{bound:psis1} to control $\psis'(y)$ by $s\muL(y)$, and \eqref{muL0}, we have
$$
|\psis'(y)| \preceq s\frac{\l}{\lambda^{3/2} \sqrt{\lambda-y}}, 
$$
and thus consider
\begin{equation*}
\int_{\lambda/2}^{\lambda-\L} dx \sqrt{\lambda} \int_{y-x > \frac{\lambda -x}{2}} \sqrt{\lambda} \frac{|\psis'(y)|}{(y-x)^2} dy \preceq s \int_{\lambda/2}^{\lambda-\L} dx \lambda \int_{y-x > \frac{\lambda -x}{2}} \frac{\l}{\lambda^{3/2} \sqrt{\lambda - y} (y-x)^2} dy.
\end{equation*}
Since $y-x > \frac{\lambda -x}{2}$, we may replace $\frac{1}{(y-x)^2}$ by $\frac{1}{(\lambda - x)^2}$, and we now study
\begin{multline*}
s \int_{\lambda/2}^{\lambda-\L} dx \frac{\lambda}{(\lambda-x)^2} \int_{x + \frac{\lambda-x}{2}}^{\lambda} \frac{\l}{\lambda^{3/2} \sqrt{\lambda-y}} dy \preceq s \int_{\lambda/2}^{\lambda-\L} dx \frac{\lambda}{(\lambda-x)^2}  \frac{\l}{\lambda^{3/2}} \sqrt{\lambda-x} dx\\
 \preceq s \frac{\l}{\sqrt{\lambda} \sqrt{\L}} = s \pto.
\end{multline*}

\item Using \eqref{psisplusprecis2} to control $\psis(x)$ by $s \frac{\l \sqrt{\lambda-x}}{\lambda^{3/2}}$, we write
\begin{multline*}
\int_{\lambda/2}^{\lambda-\L} dx \sqrt{\lambda} \int_{y-x > \frac{\lambda -x}{2}} \sqrt{\lambda}  \frac{|\psis(x)|}{(y-x)^3} dy \preceq s \int_{\lambda/2}^{\lambda-\L} dx \lambda \frac{\l \sqrt{\lambda -x}}{\lambda^{3/2}} \int_{|y-x| > \frac{\lambda - x}{2}} \frac{1}{(y-x)^3} dy \\
\preceq s \int_{\lambda/2}^{\lambda-\L} \lambda \frac{\l \sqrt{\lambda -x}}{\lambda^{3/2}} \frac{1}{(\lambda-x)^2} dx \preceq \frac{\l}{\sqrt{\L} \sqrt{\lambda}} = s \pto.
\end{multline*}

\item Using \eqref{psisplusprecis2} to control $\psis(y)$ by $s \frac{\l \sqrt{\lambda-y}}{\lambda^{3/2}}$, we write
\begin{multline*}
\int_{\lambda/2}^{\lambda} dx \sqrt{\lambda} \int_{y-x > \frac{\lambda -x}{2}} \sqrt{\lambda} \frac{|\psi(y)|}{(y-x)^3} dy \preceq s \int_{\lambda/2}^{\lambda} dx \frac{\lambda}{(\lambda-x)^3} \int_{x + \frac{\lambda-x}{2}}^{\lambda} \frac{\l}{\lambda^{3/2}} \sqrt{\lambda -y} dy \\
 \preceq s \int_{\lambda/2}^{\lambda} dx \frac{\lambda}{(\lambda-x)^3} \frac{\l}{\lambda^{3/2}} (\lambda - x)^{3/2} 
\preceq s \frac{\l}{\sqrt{\lambda} \sqrt{\L}} = s\pto.
\end{multline*}
$$
\star \star \star
$$
\end{enumerate}
\item For $\lambda - \L < x < y < \lambda$. We use \eqref{magni2}, the computation is similar to the case $\frac{\lambda}{2} < x < \lambda - \L$ and $y-x < \frac{1}{2} (\lambda - x)$ above, we obtain again an error as $s \frac{\l}{\sqrt{\L} \sqrt{\lambda}}$ which is $s \pto$.
$$
\star \star \star
$$
\item For $x,y$ in $(-4\l, 4\l)$ the proof is as in the very first case.
$$
\star \star \star
$$

\item For $0 < x < 2 \l$, and $4\l < -y < \lambda$, we use \eqref{magni2}, we write $\frac{1}{|x-y|} \leq \frac{1}{|y|}$ and we are left with
$$
\int_{0}^{2\l} \sqrt{x} dx \int_{4\l}^{\lambda} \sqrt{y} \left( \frac{|\psis'(x)|}{y^2} + \frac{|\psis'(y)|}{y^2} + \frac{|\psis(x)|}{y^3} + \frac{|\psis(y)|}{y^3} \right) dy.
$$ 
We have $\psis(x) \preceq s$, $\psis'(x) \preceq \frac{s}{\l}$ and the corresponding terms give
$$
\int_{0}^{2\l} \sqrt{\l} \int_{4\l}^{\lambda} \sqrt{y} \left( \frac{1}{\l y^2} + \frac{1}{y^3} \right) dy = s O(1).
$$
For the two other terms, we obtain
$$
\int_{0}^{2\l} \sqrt{x} dx \int_{4\l}^{\lambda} \sqrt{y} \frac{|\psis'(y)|}{y^2} dy \preceq s \l^{3/2} \int_{4\l}^{\lambda} \frac{|\muL(y)|}{y^{3/2}} dy,
$$
and using the bounds \eqref{muL0} we see that this is $s O(1)$.

All these terms are in fact improved to $s \pto$ as above.
$$
\star \star \star
$$

\item For $2\l < x < \frac{\lambda}{2}$ and $x < - y$, the computation is similar to the case $2\l < x < \frac{\lambda}{2}$ and $\frac{4}{3} x < y$, since we can  write $\frac{1}{|x-y|} \leq \frac{1}{|y|}$.
$$
\star \star \star
$$

\item Finally, for $\frac{\lambda}{2} < x < \lambda$ and $-\lambda < y < - \frac{\lambda}{2}$, we use \eqref{magni2} and we are left to bound, after replacing $\sqrt{x}, \sqrt{y}$ by $\sqrt{\lambda}$, and $|y-x|$ by $\lambda$, the quantity
$$
\int_{\lambda/2}^{\lambda} \sqrt{\lambda}  dx \int_{\lambda/2}^{\lambda} \sqrt{\lambda} \left( \frac{|\psis'(y)|}{\lambda^2} + \frac{|\psis(y)|}{\lambda^3}  \right) dy,
$$
where we use the symmetry in $x,y$ to forget about the $\psis'(x), \psis(x)$ terms. We have
$$
\lambda^2  \int_{\lambda/2}^{\lambda} \frac{|\psis'(y)|}{\lambda^2} dy \preceq \int_{\lambda/2}^{\lambda} |\muL(y)| dy = s O(1),
$$
and, using \eqref{psisplusprecis2}, 
$$
\lambda^2  \int_{\lambda/2}^{\lambda} \frac{|\psis(y)|}{\lambda^3} dy \preceq \frac{1}{\lambda} \frac{s \l}{\lambda^{3/2}} \int_{\lambda/2}^{\lambda} \sqrt{\lambda - y} dy \preceq \frac{s \l}{\lambda} = s \pto. 
$$
\end{enumerate}
This concludes the study of $\MainZ$.
$$
\star \star  \star \star \star 
$$

\textbf{The term $\MainA$.}
We recall that
$$
\MainA(\eta) = \sum_{i=-\lambda}^{\lambda} \sum_{|j| = \lambda - \L}^{\lambda - \L/10} \tD_i \tD_j \frac{|\partial_x \Ff|_{V(i,j)}}{\L}.
$$
Using symmetries, it is enough to study 
$$
\sum_{i = - \lambda}^{\lambda} \sum_{j = \lambda-\L}^{\lambda} \tD_i \tD_j \frac{|\partial_x \Ff|_{V(i,j)}}{\L}.
$$
When taking the expectation, we use the discrepancy estimates and replace $\tD_i \tD_j$ by $\sqrt{i} \sqrt{j}$, keeping in mind that any $O(1)$ can be improved to $\pto$ by using \eqref{mieuxCS} instead of \eqref{CSmoyen}.

We split the first sum into $i \leq \lambda - 3 \L$ and $i \geq \lambda - 3\L$.
\begin{enumerate}
\item  For the first sum, we use \eqref{magni5}, and study
$$
\sum_{i = - \lambda}^{\lambda-3\L} \sum_{j = \lambda-\L}^{\lambda} \frac{\sqrt{i} \sqrt{j}}{\L} \left( \frac{|\psis'(i)}{|j-i|} + \frac{|\psis(i)| + |\psis(j)|}{(j-i)^2} \right).
$$

Replacing $\sqrt{j}$ by $\sqrt{\lambda}$, $|\psis(j)|$ by $\frac{\l \sqrt{\L}}{\lambda^{3/2}}$ (in view of \eqref{psisplusprecis2}) and $j-i$ by $\lambda - i$,  we are left with
\begin{equation} 
\label{untermequirevient}
\sum_{i = - \lambda}^{\lambda-3\L} \frac{\L \sqrt{\lambda}}{\L} \sqrt{i} \left( \frac{|\psis'(i)|}{\lambda - i} + \frac{|\psis(i)|}{(\lambda-i)^2} + \frac{\sqrt{\L} \l}{\lambda^{3/2} (\lambda -i)^2} \right).
\end{equation}

We decompose the sum further
\begin{enumerate}
\item For $- \lambda \leq i \leq 2\l$, 
we use the fact that
$\psis'(i) \preceq s \muL(i)$ and $|\muL|_{L^1} \preceq 1$, that $\psis(i) \preceq s$, we replace $\lambda -i$ by $\lambda$ and we bound $\sqrt{|i|}$ by $\sqrt{\lambda}$. We obtain
$$
s \sum_{i = - \lambda}^{2\l} \frac{\L \sqrt{\lambda}}{\L} \sqrt{\lambda} \left( \frac{|\muL(i)|}{\lambda} + \frac{1}{\lambda^2} + \frac{\sqrt{\L} \l}{\lambda^{3/2} \lambda^2} \right) = s O(1).
$$
\item For $2\l \leq i \leq \lambda/2$, 
we use the fact that $\psis'(i) \preceq s \muL(i) \preceq s \frac{\l}{i^2}$, that $\psis(i) \preceq s \frac{\l}{i}$, we replace $\lambda -i$ by $\lambda$. We obtain
$$
s \sum_{i = 2\l}^{\lambda/2} \frac{\L \sqrt{\lambda}}{\L} \sqrt{i} \left( \frac{\l}{i^2 \lambda} + \frac{\l}{i \lambda^2} + \frac{\sqrt{\L} \l}{\lambda^{3/2} \lambda^2} \right) = s \pto.
$$
\item For $\lambda /2 \leq i \leq \lambda - 3\L$, 
we use the fact that $\psis'(i) \preceq s \muL(i) \preceq s\frac{\l}{\lambda^{3/2} \sqrt{\lambda -i}}$, that $\psis(i) \preceq s \frac{\l \sqrt{\lambda-i}}{\lambda^{3/2}}$, we replace $\sqrt{i}$ by $\sqrt{\lambda}$. We obtain
$$
s\sum_{i = \lambda/2}^{\lambda-3\L} \frac{\L \sqrt{\lambda}}{\L} \sqrt{\lambda} \left( \frac{\l}{\lambda^{3/2} (\lambda - i)^{3/2}} + \frac{\l \sqrt{\lambda -i}}{\lambda^{3/2} (\lambda-i)^2} + \frac{\sqrt{\L} \l}{\lambda^{3/2} (\lambda -i)^2} \right) = s\pto.
$$
\end{enumerate}
\item For the second sum, we use \eqref{magni6}, observe that near $\lambda$ we have, in view of \eqref{bound:psis2} and \eqref{muL1},
$$
\sup_{t \in [\lambda - 4 \L, \lambda]} |\psis^{(\2)}(t)| \preceq s \frac{\l}{\lambda^{3/2} \L^{3/2}}
$$
and study
$$
s \sum_{i = \lambda - 3\L}^{\lambda} \sum_{j = \lambda-\L}^{\lambda} \frac{\sqrt{i}\sqrt{j}}{\L} \frac{\l}{\lambda^{3/2} \L^{3/2}}.
$$
We replace $\sqrt{i} \sqrt{j}$ by $\lambda$ and get $s \pto$ by direct computation.
\end{enumerate}
This concludes the study of $\MainA$.
$$
\star \star  \star \star \star 
$$

\paragraph{\textbf{The term $\MainB$.}}
We recall that
$$
\MainB(\eta) = \sum_{|i| = \lambda - \L}^{\lambda - \L/10} \sum_{|j| = \lambda - \L}^{\lambda - \L/10} \tD_i \tD_j \frac{|\Ff|_{V(i,j)}}{\L^2}
$$
Using symmetries, it enough to study
$$
\sum_{i = \lambda - \L}^{\lambda - \L/10} \sum_{j = \lambda - \L}^{\lambda - \L/10} \tD_i \tD_j \frac{|\Ff|_{V(i,j)}}{\L^2} + \sum_{i = -\lambda }^{-\lambda + \L/10} \sum_{j = \lambda - \L}^{\lambda - \L/10} \tD_i \tD_j \frac{|\Ff|_{V(i,j)}}{\L^2}.
$$
When taking the expectation, we use the discrepancy estimates and replace $\tD_i \tD_j$ by $\sqrt{i} \sqrt{j}$. Here we replace further $\sqrt{i} \sqrt{j}$ by $\lambda$. 
\begin{enumerate}
\item For the first sum, we use \eqref{magni4} and observe that, near $\lambda$, we have (in view of \eqref{bound:psis1} and \eqref{muL0})
$$
\sup_{t \in \lambda - 2\L, \lambda} |\psis^{(\1)}(t)| \preceq s \frac{\l}{\lambda^{3/2} \sqrt{\L}},
$$
hence we obtain
\begin{equation*}
s \sum_{i = \lambda - \L}^{\lambda - \L/10} \sum_{j = \lambda - \L}^{\lambda - \L/10} \lambda \frac{|\Ff|_{V(i,j)}}{\L^2} \preceq \L^2 \lambda  \frac{\l}{\L^2 \lambda^{3/2} \sqrt{\L}} = s \pto.
\end{equation*}
\item For the second sum, we use \eqref{magni3}, and observe that, in view of \eqref{psisplusprecis2}, we have, for $i$ near $-\lambda$ and $j$ near $\lambda$,
$$
\frac{|\psis(i)| + |\psis(j)|}{(j-i)} \preceq s \frac{\l \sqrt{\L}}{\lambda^{3/2} \lambda}
$$
and we thus obtain
$$
s \sum_{i = -\lambda }^{-\lambda + \L/10} \sum_{j = \lambda - \L}^{\lambda - \L/10} \lambda \frac{\l \sqrt{\L}}{\lambda^{3/2} \L^2 \lambda} = s \pto.
$$
\end{enumerate}
This concludes the study of $\MainB$.
$$
\star \star  \star \star \star 
$$

\paragraph{\textbf{The term $\MainC$.}}
We recall that
$$
\MainC(\eta)  = \left( \L + \left|\Discr_{|x| \in [\lambda - \L/8, \lambda]}\right| \right) \sum_{j=-\lambda}^\lambda \sup_{|x| \in [\lambda - \L/8, \lambda]} |\partial_y \Ff|_{V(x,j)} \tD_j
$$
Taking the expectation, we use the discrepancy estimates and get
$$
\Esp \left[ \left( \L + \left|\Discr_{|x| \in [\lambda - \L/8, \lambda]}\right| \right) \tD_j \right] \preceq \L \sqrt{j},
$$
so we study
$$
\sum_{j=-\lambda}^\lambda \L \sqrt{j} \sup_{|x| \in [\lambda - \L/8, \lambda]} |\partial_y \Ff|_{V(x,j)}.
$$
We split the sum into $j \leq \lambda - 3\L$ and $j \geq \lambda - 3\L$.
\begin{enumerate}
\item For $j \leq \lambda - 3\L$, we use \eqref{magni5} (switching the roles of $x$ and $y$) and write, for $x$ in $[\lambda - \L/8, \lambda]$
$$
|\partial_y \Ff|_{V(x,j)} \preceq \frac{|\psis'(j)|}{|j-x|} + \frac{|\psis(x)|}{(j-x)^2} + \frac{|\psis(j)|}{(j-x)^2}.
$$
We replace $j-x$ by $\lambda -j$ and (in view of \eqref{psisplusprecis2}) $|\psis(x)|$ by $s \frac{\l \sqrt{\L}}{\lambda^{3/2}}$, and we obtain
$$
\sup_{|x| \in [\lambda - \L/8, \lambda]} |\partial_y \Ff|_{V(x,j)} \preceq \frac{|\psis'(j)|}{\lambda - j} + \frac{s \l \sqrt{\L}}{\lambda^{3/2} (\lambda-j)^2} + \frac{|\psis(j)|}{(\lambda-j)^2},
$$
we are thus left to study
$$
\sum_{j=-\lambda}^{\lambda - 3\L} \L \sqrt{j} \left( \frac{|\psis'(j)|}{\lambda - j} + \frac{s \l \sqrt{\L}}{\lambda^{3/2} (\lambda-j)^2} + \frac{|\psis(j)|}{(\lambda-j)^2} \right).
$$
This is actually much smaller than \eqref{untermequirevient}, which was already treated.
\item For $j \geq \lambda - 3\L$, we use \eqref{magni6} and write, for $x$ in $[\lambda - \L/8, \lambda]$
$$
|\partial_y \Ff|_{V(x,j)} \preceq \sup_{t \in [\lambda - 4\L, \lambda]} |\psis^{(\2)}(t)| \preceq s \frac{\l}{\lambda^{3/2} \L^{3/2}}, 
$$
and a direct computation gives 
$$
s \sum_{j=\lambda-3\L}^\lambda \L \sqrt{j} \sup_{|x| \in [\lambda - \L/8, \lambda]} |\partial_y \Ff|_{V(x,j)} \preceq s \L^2 \sqrt{\lambda} \frac{\l}{\lambda^{3/2} \L^{3/2}} = s \pto.
$$
\end{enumerate}
This concludes the study of $\MainC$.
$$
\star \star  \star \star \star 
$$

\paragraph{\textbf{The term $\MainD$}.}
We recall that
$$
\MainD(\eta)  = \left( \L + \left|\Discr_{|x| \in [\lambda - \L/8, \lambda]}\right| \right) \sum_{|j| = \lambda - \L}^{\lambda - \L/10} \frac{\sup_{|x| \in [\lambda - \L/8, \lambda]} |\Ff|_{V(x,j)}}{\L} \tD_j . 
$$
For the same reasons as above, we are led to study
$$
\sum_{|j| = \lambda - \L}^{\lambda - \L/10} \frac{\sup_{|x| \in [\lambda - \L/8, \lambda]} |\Ff|_{V(x,j)}}{\L} \L \sqrt{\lambda},
$$
and we split the sum in two parts: $j$ near $-\lambda$ and $j$ near $\lambda$. For the first part, we use \eqref{magni3}, and for the second part we use \eqref{magni4} to control $|\Ff|_{V(x,j)}$. After some computation, we obtain $s \pto$. 
This concludes the study of $\MainD$, and the proof of the proposition.
\end{proof}

\subsection{Proof of Corollary \ref{coro:ErrorDFpetit}}
\label{sec:proofErrorDF}
\begin{proof}[Proof of Corollary \ref{coro:ErrorDFpetit}]
We can split $\Lambda^c$ into $\{ x \geq \lambda \}$ and $\{ x \leq - \lambda \}$, both parts yield an equivalent contribution, so we only consider the first one. We need an adaptation of the \textit{a priori} bound \eqref{aprioribound} to a slightly different context. 

\begin{claim}[A priori bound - “hard edge” and decay assumption]
\label{claim:hardedge}
Let $g$ be a $C^1$ function such that
\begin{equation}
\label{decayassump}
\limsup_{x \to \infty} |x g(x)| < + \infty, \quad \limsup_{x \to \infty} x^2 |g'(x)| < + \infty,
\end{equation}
then, $\sineb$-a.s. both sides of the following inequality are finite, and the inequality holds
$$
\int_{\lambda}^{+\infty} g(x) (d\C - dx) \preceq \sum_{j=\lambda}^{+\infty} |g|_{\1, \Vj} \DR_j + g(\lambda) |\Discr_{[\lambda, \lambda+1]}|.
$$
\end{claim}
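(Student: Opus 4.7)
The plan is to adapt the proof of Proposition~\ref{prop:aprioribounds}, accounting for the fact that we now integrate on a half-line $[\lambda, +\infty)$ rather than all of $\R$: this forces us to handle both a hard edge at $x = \lambda$ and an infinite tail. I would first assume without loss of generality that $\lambda$ is an integer (if not, absorb the bounded contribution of the fractional piece $[\lambda, \lceil \lambda \rceil]$ into the $j = \lceil \lambda \rceil$ term). Then split
\begin{equation*}
\int_{\lambda}^{+\infty} g(x)(d\C - dx) = \sum_{k=\lambda}^{\infty} \int_{k}^{k+1} g(x)(d\C - dx),
\end{equation*}
and on each $[k, k+1]$ apply the mean value theorem to write $g(x) = g(k) + \Oun(|g|_{\1, \Vk})$, giving
\begin{equation*}
\int_k^{k+1} g(x)(d\C - dx) = g(k)\Discr_{[k, k+1]} + \Oun(|g|_{\1, \Vk})\bigl(1 + |\Discr_{[k, k+1]}|\bigr).
\end{equation*}
The error part, once summed in $k$, already has the required form $\sum_k |g|_{\1, \Vk} \DR_k$ by definition of $\DR_k$.

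For the main part $\sum_{k \geq \lambda} g(k)\Discr_{[k,k+1]}$ I would proceed exactly as in Proposition~\ref{prop:aprioribounds}, using the identity $\Discr_{[k, k+1]} = \Discr_{[\lambda, k+1]} - \Discr_{[\lambda, k]}$ and performing an Abel summation by parts. Peeling off the $k = \lambda$ term produces the boundary contribution $g(\lambda)|\Discr_{[\lambda, \lambda+1]}|$ that appears in the claim; the telescoped remainder is $\sum_{k \geq \lambda+1}(g(k-1) - g(k))\Discr_{[\lambda, k]}$, each term of which is bounded by $|g|_{\1, \Vk}\DR_k$ after a second application of the MVT and the elementary estimate $|\Discr_{[\lambda, k]}| \leq \DR_{k-1} + 1$.

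The main obstacle, and the place where hypothesis~\eqref{decayassump} is essential, is justifying that (a)~both sides of the claimed inequality are $\sineb$-a.s.\ finite, and (b)~the Abel-summation boundary term at infinity, $\lim_{N\to\infty} g(N)\Discr_{[\lambda, N+1]}$, vanishes almost surely. Using the $L^2$-discrepancy estimate \eqref{discresA} I have $\Esp[\Discr_{[\lambda, N+1]}^2] \preceq N$, which combined with $|g(N)| \preceq 1/N$ gives $\Esp[(g(N)\Discr_{[\lambda, N+1]})^2] \preceq 1/N$; almost sure convergence to zero then follows from a Borel--Cantelli argument along a geometric subsequence $N = 2^n$, together with a monotone comparison between consecutive powers. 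Similarly, using $|g|_{\1, \Vk} \preceq k^{-2}$ and $\Esp[\DR_k] \preceq \sqrt{k-\lambda}$, the series $\sum_k |g|_{\1, \Vk}\Esp[\DR_k]$ converges, so the right-hand side of the claim is $\sineb$-a.s.\ finite. These two ingredients justify all the rearrangements performed above.
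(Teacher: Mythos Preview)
Your proposal is correct and follows essentially the same route as the paper: split into unit intervals, apply the mean value theorem, sum by parts using $\Discr_{[k,k+1]} = \Discr_{[\lambda,k+1]} - \Discr_{[\lambda,k]}$, and handle the two boundary terms. The paper invokes the a.s.\ convergence $\frac{1}{M}\Discr_{[\lambda,M]}\to 0$ as a known fact about the stationary process, whereas you attempt to prove it via Borel--Cantelli.

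One small technical slip: a \emph{geometric} subsequence $N=2^n$ does not work for the monotone interpolation. Indeed, for $2^n\le N<2^{n+1}$ the monotonicity of the point count only gives $|\Discr_{[\lambda,N]}|\le \max(|\Discr_{[\lambda,2^n]}|,|\Discr_{[\lambda,2^{n+1}]}|)+2^n$, and since $|g(N)|\preceq 2^{-n}$ the extra $2^n$ contributes a term of order $1$ that does not vanish. Using a polynomial subsequence such as $N=n^2$ fixes this: the gap $(n+1)^2-n^2=2n+1$ is $o(N)$, so the interpolation error $\frac{2n+1}{n^2}\to 0$, while $\sum_n \Esp[(g(n^2)\Discr_{[\lambda,n^2]})^2]\preceq\sum_n n^{-2}<\infty$ still yields a.s.\ convergence along the subsequence.
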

\begin{proof}[Proof of Claim \ref{claim:hardedge}]
We follow the same lines as for the proof of Proposition \ref{prop:aprioribounds}. We split the domain of integration into unit intervals and use the mean value theorem, in order to get, for $M > \lambda$ fixed
\begin{multline*}
\int_{\lambda}^M  g(x) (d\C - dx) = \sum_{k=\lambda}^{M-1} \int_{k}^{k+1} g(x) (d\C - dx) \\
\preceq \sum_{k = \lambda}^{M-1} g(k) \Discr_{[k, k+1]} + \Oun\left(|g|_{\1,\Vk}\right)\left(1 + |\Discr_{[k, k+1]}|\right).
\end{multline*}
We write, for any $k$, $\Discr_{[k, k+1]} = \Discr_{[\lambda, k+1]} - \Discr
_{[\lambda, k]}$, 
and perform a summation by parts to get 
\begin{equation*}
\sum_{k = \lambda}^{M-1} g(k) \Discr_{[k, k+1]} = \sum_{k = \lambda+1}^{M-1} \left( g(k-1) - g(k) \right) \Discr_{[\lambda, k]} 
+ g(M-1) \Discr_{[\lambda, M]} + g(\lambda) \Discr_{[\lambda, \lambda+1]}.  
\end{equation*}
In view of \eqref{decayassump}, the boundary term $g(M-1) \Discr_{[\lambda, M]}$ tends almost surely to $0$ as $M \to \infty$, because $\frac{1}{M} \Discr_{[\lambda, M]}$ tends almost surely to $0$.  On the other hand, the series 
$$
\sum_{k = \lambda}^{+\infty} |g|_{\1,\Vk} \left(1 + |\Discr_{[k, k+1]}|\right)
$$
is almost surely convergent, because we have, in view of \eqref{discrestimateA} and \eqref{decayassump}
$$
\limsup_{k \to \infty} \Esp \left[  \left( k^{2} |g|_{\1,\Vk} \left(1 + |\Discr_{[k, k+1]}| \right)\right)^2  \right] < + \infty.
$$
Sending $M \to \infty$ yields the result.
\end{proof}

We can easily check that $\ErrorDF$ satisfies the decay assumption \eqref{decayassump}. Using Claim \ref{claim:hardedge}, we get
$$
\int_{\lambda}^{+\infty} \ErrorDF(\C)(x) (d\C - dx) \preceq s \sum_{j = \lambda}^{+\infty} |\ErrorDF|_{\1, \Vj} \DR_j + |\ErrorDF(\lambda)| \Discr_{[\lambda, \lambda+1]}.
$$ 

\textbf{The boundary term.}
We claim that
\begin{equation}
\label{BTDF}
\Esp \left[ \left|\ErrorDF(\lambda) \Discr_{[\lambda, \lambda+1]}\right| \right] \preceq s \frac{\l \log(\lambda)}{\lambda^{3/2}} = s \pto.
\end{equation}
Indeed, using \eqref{ErrorDFpaprime} and the discrepancy estimates \eqref{discrestimateA} for $\DR_i$, we obtain
\begin{multline*}
\Esp \left[ |\ErrorDF(\lambda)| \Discr_{[\lambda, \lambda+1]} \right] \preceq \sum_{i = - \lambda}^{\lambda - \L} \left(\frac{|\psis(i)|}{(x-i)^2} + \frac{|\psis'(i)|}{(x-i)} \right) \Esp\left[\DR_i  \Discr_{[\lambda, \lambda+1]} \right] \\
\preceq \sum_{i = - \lambda}^{\lambda - \L} \left(\frac{|\psis(i)|}{(x-i)^2} + \frac{|\psis'(i)|}{(x-i)} \right) \sqrt{\lambda - i}.
\end{multline*}
We use \eqref{bound:psis1} and \eqref{muL0} to control the contribution of the $\psis'(i)$ terms, and \eqref{psisplusprecis2} to control the contribution of the $\psis(i)$ terms. For example, we have
$$
\sum_{i = \lambda/2}^{\lambda - \L} \frac{|\psis'(i)|}{(x-i)}  \sqrt{\lambda - i} \preceq s \sum_{i = \lambda/2}^{\lambda - \L} \frac{ \l}{\lambda^{3/2}} \frac{\sqrt{\lambda - i} }{(\lambda - i)^2} \sqrt{\lambda - i} \preceq s \frac{\l \log(\lambda)}{\lambda^{3/2}}.
$$ 

\textbf{The main contribution.}
We now claim that
\begin{equation}
\label{MTDF}
\Esp \left[ \sum_{j = \lambda}^{+\infty} |\ErrorDF|_{\1, \Vj} \DR_j \right] = s \pto.
\end{equation}
To prove \eqref{MTDF}, we use \eqref{ErrorDFprime} and write
$$
\sum_{j = \lambda}^{+\infty} |\ErrorDF|_{\1, \Vj} \DR_j \preceq \sum_{j = \lambda}^{+\infty} \sum_{i = - \lambda}^{\lambda - \L} \left(\frac{|\psis(i)|}{(j-i)^3} + \frac{|\psis'(i)|}{(j-i)^2} \right) \DR_i \DR_j.
$$
Taking the expectation and using Cauchy-Schwarz's inequality, we get
\begin{multline}
\label{CSA}
\Esp\left[ \sum_{j = \lambda}^{+\infty} |\ErrorDF|_{\1, \Vj} \DR_j \right]  \\ \preceq  \sum_{j = \lambda}^{+\infty} \sum_{i = - \lambda}^{\lambda - \L} \left(\frac{|\psis(i)|}{(j-i)^3} + \frac{|\psis'(i)|}{(j-i)^2} \right) \Esp\left[\left(\DR_i\right)^2\right]^{1/2} \Esp\left[\left(\DR_j\right)^2\right]^{1/2}
\end{multline}
Using the discrepancy estimate \eqref{discrestimateA} we obtain
$$
\Esp\left[ \sum_{j = \lambda}^{+\infty} |\ErrorDF|_{\1, \Vj} \DR_j \right] \preceq \sum_{j = \lambda}^{+\infty} \sum_{i = - \lambda}^{\lambda-\L} \left(\frac{|\psis(i)|}{(j-i)^3} + \frac{|\psis'(i)|}{(j-i)^2} \right) \sqrt{|\lambda - i|}\sqrt{|\lambda - j|}.
$$
Let us keep in mind that, as in the proof of Proposition \ref{prop:mainterms}, we may use the sharper discrepancy estimates \eqref{discrestimateB} instead of \eqref{discrestimateA}, and take advantage of the fact that
$$
\Esp\left[\left(\DR_j\right)^2\right] = o_{|j - \lambda| \to \infty}\left(j-\lambda\right).
$$

\begin{description}
\item[The terms $i$ far from $\lambda$] We first treat the case $-\lambda \leq i \leq \lambda /2$.
\begin{enumerate}
\item 
For any $j \geq \lambda$, using the estimates \eqref{psisplusprecis2} on $\psis$, we may write
$$
\sum_{i = - \lambda}^{\lambda /2} \frac{|\psis(i)|}{(j-i)^3} \sqrt{\lambda - i} \preceq s \frac{\sqrt{\lambda}}{j^3} \sum_{i = - \lambda}^{\lambda /2} |\psis(i)| \preceq s \frac{ \sqrt{\lambda} \l \log(\lambda)}{j^3}.
$$
We thus get:
\begin{equation}
\label{moinsllsur2A}
\sum_{j = \lambda}^{+\infty} \sum_{i = - \lambda}^{\lambda /2} \frac{|\psis(i)|}{(j-i)^3} \sqrt{\lambda - i} \sqrt{j - \lambda} \preceq s \sum_{j = \lambda}^{+\infty} \frac{ \sqrt{\lambda} \l \log(\lambda)}{j^{5/2}} \preceq s \frac{ \l \log(\lambda)}{ \lambda} = s \pto.
\end{equation}

\item For any $j \geq \lambda$, using \eqref{bound:psis1} and \eqref{muL0}, we write
$$
\sum_{i = - \lambda}^{\lambda /2} \frac{|\psis'(i)|}{(j-i)^2} \sqrt{\lambda - i} \preceq \frac{\sqrt{\lambda}}{j^2} \sum_{i = - \lambda}^{\lambda /2} |\psis'(i)| \preceq s \frac{\sqrt{\lambda}}{j^2}.
$$
We thus get:
$$
\sum_{j = \lambda}^{+\infty} \sum_{i = - \lambda}^{\lambda /2} \frac{|\psis'(i)|}{(j-i)^2} \sqrt{\lambda - i} \sqrt{j - \lambda} \preceq s \sum_{j = \lambda}^{+\infty} \frac{ \sqrt{\lambda} \sqrt{j - \lambda}}{j^2}. 
$$
A rough bound would only yield a $O(1)$ contribution here. Instead, we split the sum into
$$
\sum_{j = \lambda}^{\lambda + \log(\lambda)} \frac{ \sqrt{\lambda} \sqrt{j - \lambda}}{j^2} \preceq \frac{1}{\lambda^{3/2}} \sum_{j = 0}^{\log(\lambda)} \sqrt{k} = \pto,
$$
and the remainder where $j- \lambda \geq \log(\lambda)$, in which we use \eqref{discrestimateB} instead of \eqref{discrestimateA}, which allows us to replace $\sqrt{j - \lambda}$ by $o_{\lambda}(\sqrt{j - \lambda})$, and
$$
\sum_{j = \lambda + \log(\lambda)}^{+\infty} \frac{ \sqrt{\lambda} \times o_{\lambda} \left(\sqrt{j - \lambda}\right)}{j^2} = \pto.
$$
Hence
\begin{equation}
\label{moinsllsur2B}
\sum_{j = \lambda}^{+\infty} \sum_{i = - \lambda}^{\lambda /2} \frac{|\psis'(i)|}{(j-i)^2} \sqrt{\lambda - i} \times o\left( \sqrt{j - \lambda} \right) = s \pto.
\end{equation}
\end{enumerate}

Combining \eqref{moinsllsur2A} and \eqref{moinsllsur2B}, we see that the contribution in \eqref{CSA} coming from the terms “$i$ far from $\lambda$” i.e. here $-\lambda \leq i \leq \lambda/2$, is $s \pto$.

\item[The terms $i$ close to $\lambda$] We now consider $\lambda /2 \leq i \leq \lambda - \L$
\begin{enumerate}
\item  For any $j \geq \lambda$, using the estimates \eqref{psisplusprecis2} on $\psis$, we may write
$$
\sum_{i = \lambda/2}^{\lambda - \L} \frac{|\psis(i)|}{(j-i)^3} \sqrt{\lambda - i} \preceq s \sum_{i = \lambda/2}^{\lambda - \L} \frac{\l}{\lambda^{3/2}} \frac{\lambda -i}{(j-\lambda + \lambda-i)^3}.
$$
We distinguish the cases $\lambda - i \leq j - \lambda$ and $\lambda -i \geq j - \lambda$.

\begin{enumerate}
\item We have (the sum being non-empty only if $j - \lambda \geq \L$):
$$
\sum_{i \in  [\lambda/2, \lambda- \L] | \lambda -i \leq j - \lambda} \frac{\l}{\lambda^{3/2}} \frac{\lambda -i}{(j-\lambda + \lambda-i)^3} \preceq \frac{\l}{\lambda^{3/2}} \frac{1}{(j-\lambda)^2} \sum_{i \in  [\lambda/2, \lambda- \L] | \lambda -i \leq j - \lambda} 1 \preceq \frac{\l}{\lambda^{1/2}} \frac{1}{(j-\lambda)^2},
$$
\item 
On the other hand (the sum being non-empty only if $j - \lambda \leq \lambda/2$):
$$
\sum_{i \in  [\lambda/2, \lambda - \L] | \lambda -i > j - \lambda} \frac{\l}{\lambda^{3/2}} \frac{\lambda -i}{(j-\lambda + \lambda-i)^3} \preceq \frac{\l}{\lambda^{3/2}} \sum_{i \in  [\lambda/2, \lambda - \L] | \lambda -i > j - \lambda} \frac{1}{(\lambda-i)^2} \preceq \frac{\l}{\lambda^{3/2}} \frac{1}{j-\lambda},
$$
\end{enumerate}
We may thus write
\begin{equation*}
\sum_{j = \lambda}^{+ \infty} \sum_{i = \lambda/2}^{\lambda - \L} \frac{|\psis(i)|}{(j-i)^3} \sqrt{\lambda - i} \sqrt{j - \lambda} \preceq s \sum_{j = \lambda + \L}^{+ \infty} \frac{\l}{\lambda^{1/2}} \frac{1}{(j-\lambda)^{3/2}} + s \sum_{j= \lambda}^{3 \lambda /2}  \frac{\l}{\lambda^{3/2}} \frac{1}{\sqrt{j-\lambda}}
\preceq s \frac{\l}{\lambda^{1/2} \L^{1/2}} + s \frac{\l}{\lambda}.
\end{equation*}
We obtain
\begin{equation}
\label{farfroml1}
\sum_{j = \lambda}^{+\infty} \sum_{i = \lambda/2}^{\lambda - \L} \frac{|\psis(i)|}{(j-i)^3} \sqrt{\lambda - i} \sqrt{j - \lambda} = s \pto.
\end{equation}

\item Concerning the terms $\psis'(i)$, we use \eqref{bound:psis0} and \eqref{muL0} and write, for any $j \geq \lambda$,
$$
\sum_{i = \lambda/2}^{\lambda - \L} \frac{|\psis'(i)|}{(j-i)^2} \sqrt{\lambda - i} \preceq s \sum_{i = \lambda/2}^{\lambda - \L} \frac{\l}{\lambda^{3/2} \sqrt{\lambda - i} (j-i)^2} \sqrt{\lambda - i}  \preceq s \frac{\l}{\lambda^{1/2}} \frac{1}{(j-\lambda + \L)^2}.
$$
We thus have
$$
\sum_{\lambda}^{+ \infty} \sum_{i = \lambda/2}^{\lambda - \L} \frac{|\psis'(i)|}{(j-i)^2} \sqrt{\lambda - i} \sqrt{j - \lambda} \preceq s\sum_{\lambda}^{+ \infty}\frac{\l}{\lambda^{1/2}} \frac{\sqrt{j - \lambda}}{(j-\lambda + \L)^2} \preceq s\frac{\l}{\lambda^{1/2} \L^{1/2}}
$$
and we obtain
\begin{equation}
\label{farfroml2}
\sum_{j = \lambda}^{+\infty} \sum_{i = \lambda/2}^{\lambda - \L} \frac{|\psis'(i)|}{(j-i)^2} \sqrt{\lambda - i} \sqrt{j - \lambda} = s \pto.
\end{equation}
\end{enumerate}
Combining \eqref{farfroml1} and \eqref{farfroml2}, we see that the contribution in \eqref{CSA} coming from the terms “$i$ close to $\lambda$”, i.e. here $\lambda/2 \leq i \leq \lambda - \L$, is $\pto$.
\end{description}
This concludes the proof of \eqref{MTDF}, which, combined with \eqref{BTDF}, yields \eqref{petitDF}.
\end{proof}

\subsection{Proof of Lemma \ref{lem:controlonGam}}
\label{sec:proofcontrolGam}
\begin{proof}[Proof of Lemma \ref{lem:controlonGam}]
\textbf{For $3\lambda/4 \leq |x| \leq 4 \lambda$.}
For simplicity we consider $\Gamma(\lambda)$, the proof extends readily to $\Gamma(x)$ for $3\lambda/4 \leq |x| \leq 4 \lambda$. We have, by definition
$$
\Gamma(\lambda) = \int - \log(\lambda -y) \mulambda(y) dy.
$$

\textit{The $|y| \leq \hal \lambda$ part.}
We want to show:
\begin{equation} \label{yleqlambda2A}
\int_{-\lambda/2}^{\lambda/2} \log(\lambda - y) \mulambda(y) dy \preceq \frac{ \l \log^2(\lambda)}{\lambda}.
\end{equation}
We recall that, by definition,
$$
\mulambda(y) = \frac{-1}{\pi} \frac{1}{\sqrt{\lambda^2-y^2}} \HLP(y) = \frac{-1}{\pi} \frac{1}{\sqrt{\lambda^2-y^2}} \PV \int \frac{\varphi'(t) \sqrt{\lambda^2-t^2}}{y-t} dt.
$$
For $|y| \leq \hal \lambda$, we write
\begin{multline*}
\log |\lambda - y| \frac{1}{\sqrt{\lambda^2-y^2}} = \left( \log(\lambda) + \Oun\left( \frac{|y|}{\lambda} \right) \right) \left( \frac{1}{\lambda} + \Oun\left(\frac{|y|^2}{\lambda^3} \right) \right)  \\
= \frac{\log(\lambda)}{\lambda} + \Oun \left( \frac{|y|}{\lambda^2} + \frac{|y|^2 \log(\lambda)}{\lambda^3} \right) = \frac{\log(\lambda)}{\lambda} +  \Oun \left( \frac{\log(\lambda) |y|}{\lambda^2}\right) .
\end{multline*}
and thus
\begin{equation}
\label{yleqlambda2}
\log |\lambda - y| \frac{1}{\sqrt{\lambda^2-y^2}} \HLP(y) = 
\frac{\log(\lambda)}{\lambda} \HLP(y)  + \Oun \left( \frac{\log (\lambda) |y|}{\lambda^2}\right) |\HLP(y)|.
\end{equation}

Using \eqref{integraledeH}, we get
\begin{equation}
\label{yleqlambda2PV} 
\int_{-\lambda/2}^{\lambda/2} \frac{\log(\lambda)}{\lambda} \HLP(y)  \preceq \frac{\l \log (\lambda)}{\lambda}.
\end{equation}
Using the bounds \eqref{mulambda0}, we can check that
$$
\int_{-\lambda/2}^{\lambda/2} |y| |\HLP(y)| dy \preceq \l \lambda \log(\lambda),
$$
and thus
\begin{equation}
 \int_{-\lambda/2}^{\lambda/2}  \frac{\log (\lambda)}{\lambda^2} |y| |\HLP(y)| dy  \preceq  \frac{\l \log^2(\lambda)}{\lambda}.
\end{equation}
We obtain \eqref{yleqlambda2A}.

\textit{The $|y| \geq \lambda / 2$ part.}
We want to show:
\begin{equation}
 \label{ygeqlambda2}
\int_{|y| \geq \lambda / 2} \log |\lambda -y| \mulambda(y) dy \preceq  \frac{\l \log(\lambda)}{\lambda}.
\end{equation}
We use \eqref{mulambda0} and an elementary computation. The mass of $\mulambda$ outside $[-\lambda/2, \lambda/2]$ is indeed $\Oun\left(\frac{\l}{\lambda}\right)$.

Combining \eqref{yleqlambda2A} and \eqref{ygeqlambda2}, we obtain \eqref{Gambord}.

\textbf{For $\lambda \leq |x| \leq 4 \lambda$.}
We can e.g. assume that $\lambda \leq x \leq 4\lambda$. write $\Gam'(x)$ as
$$
\Gam'(x) = \int \frac{1}{x-t} \mulambda(t) dt, 
$$
and we use \eqref{mulambda0}. We have
$$
\int_{-\lambda}^{\lambda^2} \frac{1}{x-t} |\mulambda(t)| \leq \frac{1}{\lambda},
$$
and we focus on the remaining part $t \in [\lambda/2, \lambda]$. We write
$$
\int_{\lambda^2}{\lambda} \frac{1}{x-t} |\mulambda(t)| dt \preceq \int_{\lambda^2}^{\lambda} \frac{1}{x-\lambda + \lambda-t} \frac{\l}{\lambda^{3/2} \sqrt{\lambda - t}} dt = \frac{\l}{\lambda^{3/2}} \int_{0}^{\lambda/2} \frac{1}{(x-\lambda + v) \sqrt{v}} dv.
$$
An elementary computation shows that
$$
 \int_{0}^{\lambda/2} \frac{1}{(x-\lambda + v) \sqrt{v}} \preceq \frac{1}{\sqrt{x-\lambda}}, 
$$
which yields \eqref{Gampprimepres}.

\textbf{For $|x| \geq 4 \lambda$.}
We write $\Gam'(x)$ as
$$
\Gam'(x) = \int \frac{1}{x-t} \mulambda(t) dt, 
$$
and, since $\mulambda$ has total mass $0$, a first-order expansion yields
$$
\Gam'(x) \preceq \frac{1}{x^2} \int |t| |\mulambda(t)|.
$$
We can use \eqref{mulambda0} to compute 
$\int |t| |\mulambda(t)| dt \preceq \l \log(\lambda),$
which yields \eqref{Gamprimeloin}.
\end{proof}

\bibliographystyle{alpha}
\bibliography{TCL}

\end{document}